\newtheorem{theorem}{Theorem}[section]
\newtheorem{lemma}[theorem]{Lemma}
\newtheorem{prop}[theorem]{Proposition}
\newtheorem{cor}[theorem]{Corollary}
\newtheorem*{Theorem1'}{Theorem 1'}
\theoremstyle{definition}
\newtheorem{definition}[theorem]{Definition}
\theoremstyle{remark}
\numberwithin{equation}{section}
\newcommand \B{{\mathcal B}}
\newcommand \C{{\mathbb C}}
\renewcommand \ker{{\mathrm {ker}}}
\newcommand \Hom{{\mathrm {Hom}}}
\newcommand \GL{{\mathrm {GL}}}
\newcommand \Sp{{\mathrm {Sp}}}
\newcommand \m{{\mathfrak {m}}}
\newcommand \n{{\mathfrak {n}}}
\newcommand \la{{\lambda}}
\newcommand \si{{\sigma}}
\newcommand \ind{{\mathrm {ind}}}
\newcommand \res{{\mathrm {res}}}
\newcommand \Irr{{\mathrm {Irr}}}
\newcommand \Ba{{\mathcal B}}
\begin{document}

\title[Generators and relations for the unitary group]{Generators and relations for the unitary group of a skew hermitian form over a local ring}

\author{J. Cruickshank}
\address{School of Mathematics, Statistics and Applied Mathematics , National University of Ireland, Galway, Ireland}
\email{james.cruickshank@nuigalway.ie}

\author{F. Szechtman}
\address{Department of Mathematics and Statistics, Univeristy of Regina, Canada}
\email{fernando.szechtman@gmail.com}
\thanks{The second author was supported in part by an NSERC discovery grant}

\subjclass[2010]{20F05, 20C15, 20H25, 15B33}



\keywords{unitary group; Bruhat decomposition; group presentation; transvection}

\begin{abstract} Let $(S,*)$ be an involutive local ring and let $U(2m,S)$ be the unitary
group associated to a nondegenerate skew hermitian form defined on a free $S$-module of rank~$2m$.
A presentation of $U(2m,S)$ is given in terms of Bruhat generators and their relations. This presentation
is used to construct an explicit Weil representation of the symplectic group $Sp(2m,R)$ when $S=R$ is commutative and $*$ is the identity.

When $S$ is commutative but $*$ is arbitrary with fixed ring $R$,
an elementary proof that the special unitary group $SU(2m,S)$ is generated by unitary
transvections is given. This is used to prove that the reduction homomorphisms $SU(2m,S)\to SU(2m,\tilde{S})$ and $U(2m,S)\to U(2m,\tilde{S})$
are surjective for any factor ring $\tilde{S}$ of $S$. The corresponding results for the symplectic group $Sp(2m,R)$ are obtained as corollaries
when $*$ is the identity.
\end{abstract}

\maketitle

\section{Introduction} We are concerned with the generation, presentation and representations of the unitary group $U(2m,S)$ of rank $2m$ over an
involutive local ring $(S,*)$ associated to a standard nondegenerate skew hermitian form.

Presentations of classical groups over fields in terms of elementary matrices can be found in
\cite[Theorems 2.3.4*, 2.3.6, 2.3.8, 6.5.7, 6.5.8, 6.5.9]{HO}. For other types of presentations for these groups see \cite{ADW, Bo, E, G}. For classical groups over rings the problem is more difficult. Pantoja \cite{P} finds a presentation for the group $SSL_*(2,A)$ when $A$ is a simple
artinian ring in terms of Bruhat generators. Here $SL_*(2,A)$ is a rank 2 $*$-analogue of $SL(2,F)$, $F$ a field, isomorphic to the rank $2m$ unitary group over the division ring underlying $A$. The presentation given in \cite{P} is stated for the subgroup $SSL_*(2,A)$ of $SL_*(2,A)$
generated by the Bruhat elements and, in general, $SSL_*(2,A)$ is a proper subgroup of $SL_*(2,A)$. We follow \cite{P} and a prior
paper by Pantoja and Soto Andrade \cite{PS} in order to extend and sharpen the results of \cite{P}
by giving a Bruhat presentation of $U(2m,S)$, where $(S,*)$ is an involutive local ring, not necessarily commutative.

As an application of the above presentation we construct an
explicit Weil representation of the symplectic group $Sp(2m,R)=U(2m,R)$ when $S=R$ is
commutative and $*$ is the identity. We simply assign linear
operators to the Bruhat generators and verify that the defining
relations are satisfied. We then demonstrate that the
representation thus defined is a Weil representation, in the sense
that it is formed by intertwining operators for the
Schr$\mathrm{\ddot{o}}$dinger representation of the Heisenberg
group on which $Sp(2m,R)$ acts by means of group automorphisms. We refer the reader to \cite{GV,V},
where Weil representations of other groups of the form $SL_*(2,A)$
have also been constructed using generators and relations, and these representations were verified to be Weil by a different method,
namely by appealing to Howe's theory of reductive dual pairs. See \cite{Pr} for more on reductive dual pairs, the Weil representation and
the theta correspondence.

We also consider the generation of the special unitary group $SU(2m,S)$ by unitary transvections, where $(S,*)$ a is local, commutative
involutive ring with fixed ring $R$. The classical field case can be found in \cite{D}. The ring case, with transvections replaced
by elementary Eichler transformations, can be found in \cite[Theorem 9.2.6]{HO}. Transvections themselves were proven by Baeza \cite{B} to generate the special unitary group associated to a nondegenerate {\em hermitian} form of hyperbolic rank $\geq 1$, provided all of $R$ is the image
of the trace map $s\mapsto s+s^*$. Unlike the field case, the skew hermitian case cannot be derived from the hermitian case, since
$*$-skew hermitian units need not exist (they certainly do not exist when $(S,*)$ is ramified). We give an elementary
proof that $SU(2m,S)$ is generated by unitary transvections and use this to prove that both reduction homomorphisms $SU(2m,S)\to SU(2m,\tilde{S})$ and $U(2m,S)\to U(2m,\tilde{S})$ are surjective for any factor ring $\tilde{S}$ of $S$. The corresponding results for the symplectic group $Sp(2m,R)$ are obtained as corollaries when $*$ is the identity (see \cite{K} for the generation of $Sp(2m,R)$ by symplectic transvections).

\section{The unitary group of the standard skew hermitian form}\label{uno}

Let \( A \) be a ring  with involution
\( * \). Given an \( m \times n \) matrix \( x \) with entries in \( A \) define the \( n \times m \) matrix
\( x^* \) by \( (x^*)_{ij} = x_{ji}^* \).  This makes $*$ into an involution of the full matrix ring $M(m,A)$.
Let $V$ be a right $A$-module endowed with a skew hermitian form $h:V\times V\to A$. This means that $h$ is linear in
the second variable and satisfies
$$
h(u,v)^*=-h(v,u),\quad u,v\in V.
$$
Let $G$ be the group of all $g\in\GL(V)$ preserving $h$, in the sense that
\begin{equation}\label{defg}
h(gu,gv)=h(u,v),\quad u,v\in V.
\end{equation}
We specialize to the case when $V$ is free of rank $2m$ over $A$ and $h$ admits
$$ J = \begin{pmatrix}0 & 1_m \\ -1_m & 0
\end{pmatrix}
$$
as Gram matrix relative to a some basis $\Ba$ of $V$. Then $G$ is the unitary group $U(2m,A)$. Let $V(2m,A)$ be the subgroup
of $\GL(2m,A)$ corresponding to $U(2m,A)$ by means of $\Ba$. Given any $x\in\GL(2m,A)$, it follows from (\ref{defg}) that $x\in V(2m,A)$ if and only if
\begin{equation}\label{jota}
x^* J x=J.
\end{equation}

The group $V(2,A)$ is denoted by $SL_*(2,A)$ in \cite{PS} and \cite{P}. By (\ref{jota})
$$
x=\begin{pmatrix} a & b \\ c & d \end{pmatrix} \in \GL(2,A)
$$
is in $SL_*(2,A)$ if and only if
\begin{equation}
    \label{eqn:elements}
    a^*c = c^*a, b^*d =d^*b, d^*a - b^*c = 1.
\end{equation}
Multiplying (\ref{jota}) on the left by $(x^*)^{-1}$ and on the right by $x^{-1}$
we obtain
$$
(x^*)^{-1} J x^{-1}=J.
$$
Inverting both sides of this equation and using $J^{-1}=-J$ yields
$$
xJx^*=J.
$$
This proves that $SL_*(2,A)$ is $*$-invariant. Thus, in addition to (\ref{eqn:elements}), we have
\begin{equation}
    \label{mas}
    ab^* = ba^*, cd^* =dc^*, da^* - cb^* =1.
\end{equation}

Following \cite{PS} we define the Bruhat elements of
\( SL_*(2,A) \) by
\begin{eqnarray*}
    w & =& \begin{pmatrix} 0&1 \\ -1&0 \end{pmatrix}, \\
    h_t &=& \begin{pmatrix} t & 0 \\ 0 & (t^*)^{-1} \end{pmatrix}, t \in A^{\times},\\
    u_r &=& \begin{pmatrix} 1 & r \\ 0 & 1 \end{pmatrix}, r \in A^s. \\
\end{eqnarray*}
Here \( A^\times \) is the group of units of \( A \) and
\( A^s \) is the set of $*$-symmetric elements of \( A \). Let $D$ (resp. $N$) be the subgroup of \( SL_*(2,A) \) consisting of
all $h_t$ (resp. $u_r$) with $t\in A^\times$ (resp. $r\in A^s$). It is readily seen that $D\cap N=1$ and $DN=ND$. In particular,
$B=DN$ is a subgroup of \( SL_*(2,A) \).

Let $SSL_*(2,A)$ be the subgroup of $SL_*(2,A)$ generated by $B$ and $w$, that is,
$$
SSL_*(2,A)=B\cup BwB\cup BwBwB\cup\dots
$$

We say that \( SL_*(2,A) \) has a Bruhat decomposition if it is generated
by the Bruhat elements, that is, if $SL_*(2,A)=SSL_*(2,A)$. We say that it has a Bruhat decomposition of
length \( l \) if every element can be written as a word in the Bruhat
elements so that the word contains at most \( l \) occurrences of \( w \).

\section{A Bruhat decomposition of $SL_*(2,A)$}\label{dos}

We assume throughout this section that \( S \) is a local
ring with involution $*$ and Jacobson radical $\mathfrak j$.
The matrix ring $A=M(m,S)$ inherits an involution
from $S$, as indicated in \S\ref{uno}, also denoted by $*$.
Since $M(2m,S)\cong M(2,A)$, we have
$$
U(2m,S)\cong V(2m,S)\cong SL_*(2,A).
$$
By assumption \( \overline S = S/\mathfrak j \) is a division ring.
We extend the bar notation to elements of \( S \) and
matrices over \( S  \) in the obvious way.
That is
\( \overline s = s + \mathfrak j \) for \( s \in S \)
and for
\( x  \in A\), \( \overline x_{ij} = x_{ij}+\mathfrak j \).
We write \( \overline A \) for \( M(m,\overline S) \).

We next show that, provided the characteristic of $\overline S$ is not 2, $SL_*(2,A)$ has a Bruhat decomposition of
length \( 2 \), thereby extending the corresponding result of \cite{PS}
from division rings to local rings.

\begin{lemma}\label{aboutc} Let $x=\begin{pmatrix} a & b \\ c & d \end{pmatrix}\in SL_*(2,A)$. Then $x\in B$ if and
only if $c=0$, and $x\in BwB$ if and
only if $c\in A^\times$.
\end{lemma}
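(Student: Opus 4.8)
The plan is to treat the two biconditionals in turn; in each, the ``only if'' direction will be a one-line block computation and the ``if'' direction an honest reduction. The organizing remark, used throughout, is that a general element of $B=DN$ has the shape
\[
h_t u_r=\begin{pmatrix} t & tr \\ 0 & (t^*)^{-1}\end{pmatrix},\qquad t\in A^\times,\ r\in A^s,
\]
whose lower-left block vanishes; this settles ``$x\in B\Rightarrow c=0$''. For the converse, I would put $c=0$ into (\ref{eqn:elements}) and (\ref{mas}) to get $d^*a=1$ and $da^*=1$; applying $*$ to the second shows $d^*$ is a two-sided inverse of $a$, so $a\in A^\times$ and $d=(a^*)^{-1}$. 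Then $h_a^{-1}x=u_r$ with $r=a^{-1}b$, and the relation $b^*d=d^*b$ becomes (after substituting $d=(a^*)^{-1}$) exactly $r^*=r$; hence $r\in A^s$ and $x=h_au_r\in B$.

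For ``$x\in BwB\Rightarrow c\in A^\times$'', write $x=b_1wb_2$ with $b_i=h_{t_i}u_{r_i}\in B$. A short computation gives that the lower-left block of $b_1w$ is the unit $-(t_1^*)^{-1}$, and since $b_2$ is block upper triangular with lower-right block the unit $t_2$, the lower-left block of $x$ comes out as $-(t_1^*)^{-1}t_2\in A^\times$.

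The substantial step is the converse of the second assertion. Assuming $c\in A^\times$, the idea is to reduce $x$, by left and right multiplication by unipotent elements $u_s$, $u_r$, to a matrix of the form $h_tw$; as $u_{-s}h_t\in ND=B$ and $u_{-r}\in N\subseteq B$, this will display $x\in BwB$. Concretely, first set $r=-c^{-1}d$, so that $xu_r$ has lower-right block $0$; since $xu_r\in SL_*(2,A)$, applying the relation $d^*a-b^*c=1$ to $xu_r$ forces its upper-right block to be $-(c^*)^{-1}$. Then set $s=-ac^{-1}$, so that $u_sxu_r$ acquires upper-left block $0$ as well, yielding
\[
u_sxu_r=\begin{pmatrix} 0 & -(c^*)^{-1} \\ c & 0\end{pmatrix}=h_{-(c^*)^{-1}}\,w ,
\]
and hence $x=u_{-s}\,h_{-(c^*)^{-1}}\,w\,u_{-r}\in BwB$.

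I expect the one genuine obstacle to be checking that the shear parameters are $*$-symmetric --- i.e.\ $r=-c^{-1}d\in A^s$ and $s=-ac^{-1}\in A^s$ --- since that is precisely what keeps the reduction inside $B$. Symmetry of $r$ unwinds to the identity $cd^*=dc^*$ from (\ref{mas}), and symmetry of $s$ to $a^*c=c^*a$ from (\ref{eqn:elements}), both read off after multiplying by the unit $c$ or $c^{-1}$; this is the one place where the skew-hermitian relations, rather than the mere invertibility of $c$, are used. The remaining verifications are routine $2\times2$ block arithmetic, and --- worth noting --- no hypothesis on $\chr\overline S$ is required for this lemma; that assumption enters only afterwards, when one upgrades this to $SL_*(2,A)=SSL_*(2,A)$.
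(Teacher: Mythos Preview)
Your proof is correct and follows essentially the same approach as the paper. The only difference is cosmetic: for the converse of the second assertion, the paper left-multiplies by $wu_{-ac^{-1}}$ (checking only that $ac^{-1}\in A^s$ via $a^*c=c^*a$) to kill the $(2,1)$ entry and then invokes the first part, whereas you shear on both sides to reach the explicit form $h_{-(c^*)^{-1}}w$, at the cost of the extra (but equally easy) symmetry check $c^{-1}d\in A^s$.
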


\begin{proof} Since $B=DN$ it is clear that $c=0$ provided $x\in B$. Suppose conversely that $c=0$. It follows
from (\ref{eqn:elements}) and (\ref{mas}) that $a\in A^\times$ and $d=(a^*)^{-1}$, whence $x\in DN=B$. An easy calculation reveals that entry $(2,1)$ of any element of $BwB$ is a unit. Suppose conversely that $c\in A^\times$. From (\ref{eqn:elements}) we see that $ac^{-1}\in A^s$, whence $u_{-ac^{-1}}\in N$. Since entry $(2,1)$ of $wu_{-ac^{-1}}x$ is equal to zero, we have $x\in BwB$ by the previous part.
\end{proof}

\begin{lemma}
    \label{lem:invertible}
    For \( x \in A \), \( x \in A^\times \) if and only if \( \overline x \in \overline A^\times \).
\end{lemma}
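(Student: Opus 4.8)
The plan is to isolate a single elementary fact and deduce the lemma from it: \emph{if $n\in M(m,\mathfrak{j})$ then $1_m+n$ is a unit of $A=M(m,S)$.} Granting this, the argument is purely formal. The bar map $A\to\overline A$ is the surjective ring homomorphism reducing each entry modulo $\mathfrak{j}$, and its kernel is precisely $M(m,\mathfrak{j})$. If $x\in A^\times$, applying the bar map to $xx^{-1}=1=x^{-1}x$ gives $\overline x\,\overline{x^{-1}}=1=\overline{x^{-1}}\,\overline x$, so $\overline x\in\overline A^\times$. Conversely, suppose $\overline x\in\overline A^\times$ and lift $\overline x^{-1}$ to some $y\in A$, which is possible since the bar map is onto. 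Then $\overline{xy}=1=\overline{yx}$, hence $xy=1_m+n$ and $yx=1_m+n'$ with $n,n'\in M(m,\mathfrak{j})$; by the isolated fact both $xy$ and $yx$ lie in $A^\times$. Therefore $x$ has the right inverse $y(xy)^{-1}$ and the left inverse $(yx)^{-1}y$; a standard argument shows these coincide, so $x\in A^\times$.

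The substance of the proof is the isolated fact, which I would establish by induction on $m$. The case $m=1$ is exactly the defining property of the local ring $S$: since $1\notin\mathfrak{j}$ we have $1+n\notin\mathfrak{j}$, and every element of $S$ outside $\mathfrak{j}$ is a unit. For $m>1$, partition $1_m+n$ with a $1\times1$ top-left corner, writing it as $\left(\begin{smallmatrix}1+a & b\\ c & 1_{m-1}+d\end{smallmatrix}\right)$ where $a\in\mathfrak{j}$, where $b$ and $c$ have all entries in $\mathfrak{j}$, and where $d\in M(m-1,\mathfrak{j})$. By the case $m=1$, $1+a$ is a unit of $S$, so left multiplication by the invertible elementary matrix $\left(\begin{smallmatrix}1 & 0\\ -c(1+a)^{-1} & 1\end{smallmatrix}\right)$ turns $1_m+n$ into $\left(\begin{smallmatrix}1+a & b\\ 0 & 1_{m-1}+d-c(1+a)^{-1}b\end{smallmatrix}\right)$. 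Since $\mathfrak{j}$ is a two-sided ideal, $c(1+a)^{-1}b$ has all entries in $\mathfrak{j}$, so $d-c(1+a)^{-1}b\in M(m-1,\mathfrak{j})$ and the lower-right block is a unit by the inductive hypothesis. A block upper triangular matrix with invertible diagonal blocks is invertible, and undoing the (invertible) elementary factor shows that $1_m+n$ is invertible.

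Alternatively, once the isolated fact is available one sees that the Jacobson radical of $M(m,S)$ is $M(m,\mathfrak{j})$: the quotient $M(m,S)/M(m,\mathfrak{j})\cong M(m,\overline S)$ is simple artinian and so has zero radical, giving one inclusion, while the isolated fact gives the other; the lemma is then the instance of the general principle that an element of a ring is a unit if and only if it is a unit modulo the Jacobson radical. On either route the only point requiring care is the behaviour of the two-sided ideal $\mathfrak{j}$ under matrix multiplication when a block is cleared; the remainder is routine bookkeeping, and I anticipate no real obstacle.
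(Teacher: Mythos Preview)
Your proof is correct. The paper's own argument is a two-line citation: it invokes \cite[Theorem 1.2.6]{H} for the identification $\mathrm{rad}(M(m,S))=M(m,\mathfrak{j})$ and then the general fact that, over any ring, an element is a unit if and only if its image modulo the Jacobson radical is a unit. Your main route is different in spirit: rather than importing the radical identification, you isolate the concrete statement ``$1_m+n$ is invertible for $n\in M(m,\mathfrak j)$'' and prove it directly by an inductive Schur-complement/block-triangular argument, then deduce the lemma by lifting an inverse. This buys you a fully self-contained proof that works uniformly in the noncommutative local setting of the paper (the only facts used are that $\mathfrak j$ is a two-sided ideal and that $S\setminus\mathfrak j=S^\times$). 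Your alternative paragraph then essentially recovers the paper's route, so you have in fact subsumed it. The trade-off is length for independence from the external reference; mathematically there is nothing to fault.
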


\begin{proof}
    First recall from \cite[Theorem 1.2.6]{H} that
    the Jacobson radical of \( A \) is \( M(m,\mathfrak j) \) and then use
    the fact that (for any ring \( A \))
    \( x \in A^\times \Leftrightarrow
    x+\text{rad}(A) \in A/\text{rad}(A)^\times\).
\end{proof}

\begin{lemma}
    \label{lem:coprimelemma}
    Suppose the characteristic of $\overline S$ is not 2 and that \( a,c \in A \) satisfy \( Aa+Ac = A \) and \( a^*c = c^*a \). Then
    there is some \( r \in A^s \) such that \( a+rc \in A^\times \).
\end{lemma}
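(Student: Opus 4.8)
The goal is to find a $*$-symmetric $r$ such that $a+rc$ is invertible. By Lemma \ref{lem:invertible}, invertibility of $a+rc$ in $A$ is equivalent to invertibility of $\overline{a+rc}=\overline a+\overline r\,\overline c$ in $\overline A=M(m,\overline S)$, and since $\overline S$ is a division ring, this just means $\overline a+\overline r\,\overline c$ has full rank $m$. The hypothesis $Aa+Ac=A$ passes to $\overline A$, so $\overline a$ and $\overline c$ have no common left kernel: the only $v\in\overline S^m$ (as a row vector acting on the left, or rather in the appropriate one-sided module) with $v\overline a=0=v\overline c$ is $v=0$. Equivalently, the matrix $\begin{pmatrix}\overline a\\ \overline c\end{pmatrix}$, which is $2m\times m$, has rank $m$; its columns are linearly independent over $\overline S$.

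First I would reduce the whole problem to the residue division ring $\overline S$: pick $\overline r$ symmetric over $\overline S$ making $\overline a+\overline r\,\overline c$ invertible, then lift. Lifting is harmless because the $*$-symmetric elements of $A$ surject onto the $*$-symmetric elements of $\overline A$ — given a symmetric $\overline r_0$, any preimage $r_1$ can be symmetrized as $\tfrac12(r_1+r_1^*)$ since $2$ is a unit (here is where $\chr\overline S\neq 2$ enters, though one should note $2$ being a unit in $S$ is what is really needed — it follows from $\overline S$ having characteristic $\neq 2$ since $S$ is local). Then $r=\tfrac12(r_1+r_1^*)\in A^s$ reduces to $\overline r_0$, and $\overline{a+rc}$ is invertible, so by Lemma \ref{lem:invertible} we are done. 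So the real content is the division-ring statement: if $a,c\in M(m,D)$ ($D=\overline S$ a division ring with involution) satisfy $a^*c=c^*a$ and have independent stacked columns, there is a symmetric $r$ with $a+rc$ invertible.

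For the division-ring core I would argue by a change of basis adapted to $c$. Choose invertible $P,Q\in M(m,D)$ so that $PcQ=\begin{pmatrix}1_k&0\\0&0\end{pmatrix}$ where $k=\mathrm{rank}(c)$. Replacing $(a,c,r)$ by $(PaQ,\,PcQ,\,(P^*)^{-1}rP^{-1})$ preserves the symmetry condition $a^*c=c^*a$, preserves $r$ being symmetric, and preserves whether $a+rc$ is invertible (multiply $a+rc$ on the left by $P$ and right by $Q$), so we may assume $c=\begin{pmatrix}1_k&0\\0&0\end{pmatrix}$. The coprimality condition then forces the bottom-right $(m-k)\times(m-k)$ block of $a$ to be invertible: if $a=\begin{pmatrix}a_{11}&a_{12}\\a_{21}&a_{22}\end{pmatrix}$, a vector killing both the last $m-k$ columns of $c$ (automatic) and the columns of $a$ would have to lie in the kernel of $\begin{pmatrix}a_{12}\\a_{22}\end{pmatrix}$, so $\begin{pmatrix}a_{12}\\a_{22}\end{pmatrix}$ has rank $m-k$; combined with $a^*c=c^*a$ one extracts that $a_{22}$ is invertible. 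Now take $r=\begin{pmatrix}r_{11}&0\\0&0\end{pmatrix}$ with $r_{11}$ symmetric $k\times k$ to be chosen; then $a+rc=\begin{pmatrix}a_{11}+r_{11}&a_{12}\\a_{21}&a_{22}\end{pmatrix}$, and since $a_{22}$ is already invertible, this is invertible iff the Schur complement $a_{11}+r_{11}-a_{12}a_{22}^{-1}a_{21}$ is invertible. The symmetry hypothesis $a^*c=c^*a$ restricted to the top-left block says $a_{11}^*=a_{11}$, i.e.\ $a_{11}$ is already symmetric, and one checks $a_{12}a_{22}^{-1}a_{21}$ is symmetric as well (using the off-diagonal consequences of $a^*c=c^*a$), so we need a symmetric $r_{11}$ making a given symmetric matrix plus $r_{11}$ invertible — take $r_{11}=1_k-(a_{11}-a_{12}a_{22}^{-1}a_{21})$, giving Schur complement $1_k$.

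The step I expect to be the main obstacle is extracting invertibility of $a_{22}$ (and the symmetry of the correction term $a_{12}a_{22}^{-1}a_{21}$) cleanly from the relation $a^*c=c^*a$ in block form, being careful about the order of multiplication and the action of $*$ over a noncommutative $D$; the bookkeeping of which blocks are symmetric, which are related by $*$-transpose, and getting the Schur complement to come out $*$-symmetric is where sign and transpose errors lurk. Everything else — the reduction to $\overline S$, the symmetrization via $\tfrac12(r+r^*)$, the base changes by $P,Q$ — is routine once set up, and the rank/kernel translation of $Aa+Ac=A$ is standard linear algebra over a division ring.
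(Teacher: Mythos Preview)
Your reduction to the residue division ring and the lifting via $r=\tfrac12(r_1+r_1^*)$ is exactly what the paper does. For the division-ring core the paper simply invokes \cite[Proposition~3.3]{PS}, so your direct block-matrix argument is a different, more self-contained route.

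However, the base-change substitution you wrote down does not do what you claim. With $a'=PaQ$, $c'=PcQ$, $r'=(P^*)^{-1}rP^{-1}$ one computes $a'+r'c'=PaQ+(P^*)^{-1}rcQ$, which is \emph{not} $P(a+rc)Q$; and $(a')^*c'=Q^*a^*(P^*P)cQ$, which is \emph{not} $Q^*(a^*c)Q$. So neither the invertibility of $a+rc$ nor the relation $a^*c=c^*a$ is preserved. The substitution that actually works is
\[
(a,c,r)\ \longmapsto\ \bigl(PaQ,\ (P^*)^{-1}cQ,\ PrP^*\bigr):
\]
then $a'+r'c'=P(a+rc)Q$, $r'$ is still $*$-symmetric, and $(a')^*c'=Q^*(a^*c)Q=Q^*(c^*a)Q=(c')^*a'$. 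You can still bring $c$ to the normal form $\left(\begin{smallmatrix}1_k&0\\0&0\end{smallmatrix}\right)$ this way, since $(P^*)^{-1}$ ranges over all of $\GL(m,D)$ as $P$ does.

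With this correction the endgame is simpler than you anticipate. Writing $a=\left(\begin{smallmatrix}a_{11}&a_{12}\\a_{21}&a_{22}\end{smallmatrix}\right)$ and comparing blocks in $a^*c=c^*a$ gives $a_{12}=0$ and $a_{11}^*=a_{11}$ outright. Coprimality then forces $a_{22}$ invertible, and choosing $r=\left(\begin{smallmatrix}1_k-a_{11}&0\\0&0\end{smallmatrix}\right)$ makes $a+rc$ block lower-triangular with invertible diagonal blocks $1_k$ and $a_{22}$. No Schur complement appears and the symmetry of $a_{12}a_{22}^{-1}a_{21}$ that worried you is moot.
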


\begin{proof}
    Suppose first that $\mathfrak j=0$. Let $V=S^m$, viewed as a right $S$-vector space.
    Consider the nondegenerate $*$-hermitian form $\langle~,~\rangle:V\times V\to S$, given by
$$
\langle u,v\rangle=u^*v,\quad u,v\in V.
$$
Given $x\in A\cong\mathrm{End}_S(V)$, we have
$$
\langle xu,v\rangle=\langle u,xv\rangle,\quad u,v\in V.
$$
Thus $*$ is the adjoint map associated to $\langle~,~\rangle$. We are therefore in the type I case considered
in \cite{PS} and the result follows from \cite[Proposition 3.3]{PS}.

    We next move to the general case and observe that \( \overline A \overline a + \overline A \overline c
    = \overline A\) and that \( \overline a^* \overline c = \overline c^* \overline a\).
    By above there is some \( u \in A \) such that
    \( \overline u \in \overline A^s \) and such that \( \overline a + \overline u \overline c
    \in \overline A^\times\). Let \( r = \frac12(u+u^*) \). Now
    \( r \in A^s \) and \( \overline{a+rc} = \overline{a}+\overline u\overline c
    \in \overline A^\times\). By Lemma \ref{lem:invertible},  \( a +rc \in A^\times \) as required.
\end{proof}

\begin{theorem}
    \label{thm:decomp}
    If \( A = M(m,S) \), where \( S \) is a local ring with
    involution and \( 2 \in S^\times \), then \( SL_*(2,A)
    \) has a Bruhat decomposition of length 2.
\end{theorem}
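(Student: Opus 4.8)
The plan is to prove directly that every $x=\begin{pmatrix} a & b \\ c & d\end{pmatrix}\in SL_*(2,A)$ lies in $B\cup BwB\cup BwBwB$, which is precisely a Bruhat decomposition of length $2$. By Lemma \ref{aboutc}, membership of an element of $SL_*(2,A)$ in $B$ (respectively $BwB$) is detected by its $(2,1)$-entry being $0$ (respectively a unit of $A$). So the strategy is: starting from an arbitrary $x$, multiply on the left by a word in the Bruhat generators involving at most one $w$, so as to move $x$ into $BwB$, i.e.\ so that the $(2,1)$-entry of the product becomes invertible.

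The identity that makes this work is that, for any $r\in A^s$, the $(2,1)$-entry of $wu_rx$ equals $-(a+rc)$. Hence it suffices to produce some $r\in A^s$ with $a+rc\in A^\times$, and for this I would apply Lemma \ref{lem:coprimelemma} to the pair $(a,c)$. Its two hypotheses come straight from the defining relations (\ref{eqn:elements}): the equality $a^*c=c^*a$ is one of them, and the relation $d^*a-b^*c=1$ exhibits $1\in Aa+Ac$, whence $Aa+Ac=A$. The characteristic hypothesis of Lemma \ref{lem:coprimelemma} holds because $2\in S^\times$ forces $2\notin\mathfrak j$, so $\chr\overline S\neq 2$; this is the only point at which the assumption $2\in S^\times$ is used.

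Given such an $r$, Lemma \ref{aboutc} yields $wu_rx\in BwB$. Since $u_{-r}\in N\subseteq B$, and since $w^{-1}=-w=h_{-1}w$ has $(2,1)$-entry $1$ and so lies in $BwB$, and since $BwB\cdot BwB=BwBwB$ (because $B$ is a subgroup), I get $x=u_{-r}w^{-1}(wu_rx)\in B\cdot BwB\cdot BwB=BwBwB$. As $x$ was arbitrary, $SL_*(2,A)=B\cup BwB\cup BwBwB=SSL_*(2,A)$, with Bruhat decomposition of length $2$, as claimed.

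The real content has already been packaged into Lemma \ref{lem:coprimelemma} --- itself reduced, via Lemma \ref{lem:invertible}, to the division-ring situation of \cite{PS} --- so, granting that lemma, the theorem amounts to a single matrix multiplication plus the routine bookkeeping with the sets $B$ and $BwB$. Accordingly I expect no serious obstacle; the only points needing a little care are checking that the relations (\ref{eqn:elements}) supply exactly the left-coprimality condition $Aa+Ac=A$ demanded by Lemma \ref{lem:coprimelemma}, and noting that invertibility of $2$ in $S$ is what transmits the non-characteristic-$2$ hypothesis down to $\overline S$.
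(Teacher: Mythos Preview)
Your proof is correct and follows essentially the same route as the paper: verify the hypotheses of Lemma~\ref{lem:coprimelemma} for the first column $(a,c)$ of $x$, obtain $r\in A^s$ with $a+rc\in A^\times$, and conclude via Lemma~\ref{aboutc} that $wu_rx\in BwB$, whence $x\in BwBwB$. The paper justifies $Aa+Ac=A$ simply by noting $x\in\GL(2,A)$, while you extract it from the explicit relation $d^*a-b^*c=1$ in (\ref{eqn:elements}); either way works.
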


\begin{proof}
    Let $x=\begin{pmatrix} a & b \\ c & d \end{pmatrix}\in SL_*(2,A)$. Then \( a^*c = c^*a \) by (\ref{eqn:elements}),
    and $Aa+Ac=A$ because $x\in\GL(2,A)$. By Lemma \ref{lem:coprimelemma},
    we have $a+rc\in A^\times$ for some $r\in A^s$. Hence $u_{r}\in N$ and entry (2,1) of $wu_{r}x$ is a unit,
    whence $x\in BwBwB$ by Lemma \ref{aboutc}.
\end{proof}

\section{A Bruhat presentation of $SL_*(2,A)$}

We now return to the case of a general ring $A$, noticing that the Bruhat elements satisfy the following relations in $SL_*(2,A)$:
$$
h_sh_{t}=h_{st}, u_qu_{r}=u_{q+r}, w^2=h_{-1}, h_tu_r=u_{trt^*}h_t, wh_t=h_{t^{*-1}}w,
u_t w u_{t^{-1}}w u_t = w h_{-t^{-1}}.
$$
Following \cite{P}, we
define $H(A)$ to be the group with presentation
$$
\langle {z},{k}_t,{v}_r|
{k}_{s}{k}_{t} = {k}_{st}, {v}_{q}{v}_{r} = {v}_{q+r}, {z}^2 = {k}_{-1},
{k}_t{v}_r = {v}_{trt^*}{k}_t, \\ {z}{k}_t = {k}_{t^{*-1}}{z},
{v}_t {z} {v}_{t^{-1}}{z}{v}_t = {z}{k}_{-t^{-1}}\rangle.
$$

In $SL_*(2,A)$ as well as in $H(A)$, we assume that $s,t\in A^\times$ and $q,r\in A^s$, except that $t\in A^\times\cap A^s$ in the last relation.

It is clear from Theorem \ref{thm:decomp} that mapping \(  z \mapsto w,
 k_t \mapsto h_t,  v_r \mapsto u_r\) induces a group epimorphism
\( \theta: H(A) \rightarrow SSL_*(2,A)\).

Let $ C$ be the subgroup of $H(A)$ generated by all $ k_t$ and $ v_r$. By definition,
$$
H(A)= C\cup  C  z  C\cup  C  z  C  z C\cup
 C  z  C  z  C  z C\cup\dots
$$

\begin{lemma}\label{le1} Suppose $y\in H(A)$ has ${z}$ length at most 2 and $\theta(y)=1$.
Then $y=1$.
\end{lemma}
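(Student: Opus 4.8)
The plan is to analyze $y$ according to its $z$-length, which is $0$, $1$, or $2$, and in each case push $y$ into the subgroup $C$ using the defining relations, then show that the restriction of $\theta$ to $C$ is injective. First I would dispose of the case where $y$ has $z$-length $0$, i.e. $y \in C$. Writing $y$ as a product of $k_t$'s and $v_r$'s and using the relations $k_sk_t = k_{st}$, $v_qv_r = v_{q+r}$, and $k_tv_r = v_{trt^*}k_t$ to collect all the $k$'s on the left, one sees every element of $C$ has the canonical form $k_t v_r$ for unique $t \in A^\times$, $r \in A^s$. Since $\theta(k_tv_r) = h_tu_r = \begin{pmatrix} t & tr \\ 0 & (t^*)^{-1}\end{pmatrix}$, the hypothesis $\theta(y) = 1$ forces $t = 1$ and $r = 0$, so $y = 1$. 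This also establishes that $\theta|_C$ is injective, which I will reuse below.

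Next, suppose $y$ has $z$-length exactly $1$, so $y = c_1 z c_2$ with $c_1, c_2 \in C$. Then $\theta(y) = \theta(c_1) w \theta(c_2)$, and entry $(2,1)$ of this matrix is a unit (by the calculation in Lemma~\ref{aboutc}, entry $(2,1)$ of any element of $BwB$ is a unit), contradicting $\theta(y) = 1$; hence this case is vacuous. The main case is $z$-length $2$: write $y = c_1 z c_2 z c_3$ with $c_i \in C$, and using the canonical form put $c_2 = k_t v_r$. If $r \in A^\times \cap A^s$, the key relation $v_t z v_{t^{-1}} z v_t = z k_{-t^{-1}}$ (equivalently $z v_r z = v_{-r^{-1}} z k_{-r^{-1}} v_{-r^{-1}}$, after rearranging) lets me rewrite $z c_2 z = z k_t v_r z$ as an element with only one $z$, collapsing $y$ to $z$-length $\le 1$; combined with the earlier cases this gives $y = 1$. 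So the remaining subcase is when $r$ is \emph{not} a unit — but $A = M(m,S)$ and $S$ is local, so $r \in A^s$ non-unit means $\overline r \in \overline A$ is a non-invertible $*$-symmetric matrix; I then need to arrange, after multiplying by suitable $v$'s from $C$ on either side (which changes $r$ by $r \mapsto r + (\text{symmetric})$ coming from conjugation identities), that $r$ becomes a unit, so the previous relation applies. Concretely, $\theta(y) = 1$ means $\theta(c_1 z c_2 z) = \theta(c_3)^{-1} \in C$, i.e. this product lies in $B$; by Lemma~\ref{aboutc} its $(2,1)$ entry vanishes, and unwinding what $\theta(c_1 z c_2 z)$ is in terms of $t, r$ and the parameters of $c_1$ forces an invertibility condition that is exactly what is needed to apply the relation.

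The hard part will be the bookkeeping in the $z$-length $2$ case: showing that whenever $\theta(y) = 1$ the middle parameter $r$ can be normalized to a unit so that the hexagonal relation $v_t z v_{t^{-1}} z v_t = z k_{-t^{-1}}$ fires. I expect this to hinge on translating $\theta(y) = 1$ into the statement (via Lemma~\ref{aboutc}) that a certain $(2,1)$-entry built from the $c_i$ is zero while another entry is forced to be a unit, the latter being precisely the hypothesis of Lemma~\ref{lem:coprimelemma} or a direct consequence of $S$ being local; I would use $2 \in S^\times$ here exactly as in Lemma~\ref{lem:coprimelemma} to symmetrize. Once $r$ is a unit, the reduction to $z$-length $\le 1$ is purely formal, and the already-handled cases finish the proof.
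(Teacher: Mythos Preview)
Your treatment of $z$-length $0$ and $1$ is fine and matches the paper. The gap is in the $z$-length $2$ case, and it is a genuine one: your strategy runs in the wrong direction.

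After normalizing (push every $k_s$ to the far left using $zk_s=k_{(s^*)^{-1}}z$ and $k_sv_r=v_{srs^*}k_s$), any $y\in CzCzC$ can be written as $y=k_t v_a z v_b z v_c$. Applying $\theta$ and computing, the $(2,1)$ entry of $h_t u_a w u_b w u_c$ is $(t^*)^{-1}b$. You correctly note that $\theta(y)=1$ forces this entry to vanish, but you then assert this ``forces an invertibility condition''. It does the opposite: it forces $b=0$. So the middle parameter is never a unit when $\theta(y)=1$, and the hexagonal relation $v_t z v_{t^{-1}} z v_t=z k_{-t^{-1}}$ is simply unavailable here. Your proposed workaround --- ``multiplying by suitable $v$'s from $C$ on either side'' to shift $r$ by a symmetric element --- does not work either: there is no relation in the presentation that moves a $v_s$ across a single $z$, so nothing you do to $c_1$ or $c_3$ alters the middle parameter $b$.

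What actually happens is much simpler. Once $b=0$ you have $v_b=1$, hence $zv_bz=z^2=k_{-1}$, and $y=k_tv_ak_{-1}v_c=k_{-t}v_{a+c}\in C$; now the $z$-length $0$ case finishes the job. This is exactly the paper's argument, and it requires no hypothesis on $A$ whatsoever --- in particular no appeal to Lemma~\ref{lem:coprimelemma}, no assumption that $S$ is local, and no use of $2\in S^\times$. Those ingredients enter only later (Lemmas~\ref{le2} and~\ref{le3}), where the task is the reverse of yours: to show that \emph{every} element of $H(A)$ has $z$-length at most $2$, which really does require producing invertible symmetric elements.
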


\begin{proof} If $y=k_t v_r\in C$ then $1=h_t u_r$, so $t=1$ and $r=0$, that is, $y=1$.
The case $y\in CzC$ is impossible by Lemma \ref{aboutc}, which excludes 1 from $BwB$.
Suppose finally that $y\in CzCzC$. Then $y=k_t v_a z v_b z v_c$ for some $a,b,c\in A^s$ and $t\in A^\times$.
Direct computation shows that entry $(2,1)$ of $h_t u_a w u_b w u_c$ is $(t^*)^{-1}b$, so $b=0$ and a fortiori
$h_{-t}u_{a+c}=1$, which implies $t=-1$ and $a+c=0$. This and $b=0$ yield $y=1$.
\end{proof}

\begin{lemma}
    \label{le2}
    Suppose for every \( a,b \in A^s \) such that
    \( a,b \not\in A^\times \) there exists some \( u \in A^\times \cap A^s \)
    such that \( a+u,b-u^{-1} \in A^\times \). Then
    every element of \( H(A) \)
    has \(  z \) length at most 2.
\end{lemma}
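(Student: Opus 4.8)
The plan is first to reduce to a single ``Bruhat block'', then to dispose of the cases where one of the two exponents is a unit, and finally to confront the case that actually uses the hypothesis.

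\textbf{Reduction.} Since $H(A)=C\cup CzC\cup CzCzC\cup\cdots$, it suffices to prove $CzCzCzC\subseteq C\cup CzC\cup CzCzC$; granting this, an induction on the number of occurrences of $z$ finishes, because a word $c_0zc_1z\cdots zc_n$ with $n\geq 3$ can have its leading block $c_0zc_1zc_2z$ (which lies in $CzCzCzC$) rewritten with at most two occurrences of $z$, dropping $n$. Now, using $v_qv_r=v_{q+r}$, $k_sk_t=k_{st}$, $k_tv_r=v_{trt^*}k_t$ and $zk_t=k_{t^{*-1}}z$, every element of $C$ has the form $v_rk_t$ and every factor $k_t$ can be shuttled across any $z$; pushing all such factors to the right-hand end, a typical element of $CzCzCzC$ becomes $v_r\,(zv_azv_bz)\,v_{r'}k_t$ with $a,b\in A^s$. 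Hence it is enough to show: for all $a,b\in A^s$, the element $zv_azv_bz$ has $z$-length at most $2$.

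\textbf{Consequences of the last relation.} For $s\in A^\times\cap A^s$, specializing $v_tzv_{t^{-1}}zv_t=zk_{-t^{-1}}$ at $t=s^{-1}$ and moving the resulting $k$'s via the other relations, I would first record
$$
zv_sz=v_{-s^{-1}}\,z\,v_{-s}\,k_{-s},\qquad zv_szv_{s^{-1}}z=v_{-s^{-1}}k_{s^{-1}}\in C .
$$
These identities (and their inverses) allow one either to collapse a block $zv_sz$ to $z$-length $1$ when $s$ is a unit, or to collapse a block $zv_szv_{s^{-1}}z$ to an element of $C$, i.e. to trade occurrences of $z$ against each other.

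\textbf{Easy cases.} If $a\in A^\times$ then $a\in A^\times\cap A^s$; substituting $zv_az=v_{-a^{-1}}zv_{-a}k_{-a}$ into $zv_azv_bz$ and pushing $k_{-a}$ past $v_b$ (picking up $v_{aba}$) and then past the final $z$ turns the word into $v_{-a^{-1}}zv_{aba-a}zk_{-a^{-1}}$, of $z$-length $2$. If instead $b\in A^\times$, write $zv_azv_bz=zv_a(zv_bz)=zv_av_{-b^{-1}}zv_{-b}k_{-b}=zv_{a-b^{-1}}zv_{-b}k_{-b}$, again of $z$-length $2$.

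\textbf{The hard case, and where I expect the obstacle to be.} Suppose $a,b\notin A^\times$. Here the hypothesis does the work — it plays, at the level of the abstract group $H(A)$, the role that the assumption $\chr\overline{S}\neq 2$ plays in Lemma~\ref{lem:coprimelemma} and Theorem~\ref{thm:decomp}. Choose $u\in A^\times\cap A^s$ with $a+u,b-u^{-1}\in A^\times$ (both then lie in $A^\times\cap A^s$, being sums of $*$-symmetric elements), and split $v_a=v_{-u}v_{a+u}$ and $v_b=v_{b-u^{-1}}v_{u^{-1}}$. Inserting these into $zv_azv_bz$ and manipulating with the identities above, the aim is to rewrite the word so that a collapsible block $zv_szv_{s^{-1}}z$ appears, with $s$ a $*$-symmetric unit built from $u$, $a+u$ and $b-u^{-1}$; collapsing that block into $C$ and absorbing the remaining material leaves at most two occurrences of $z$. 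The main obstacle is precisely this computation: one must keep exact track of the sign changes and of the twists $r\mapsto trt^{*}$ forced each time a $k_t$ crosses a $v_r$, arranged so that the two extra occurrences of $z$ produced by the two splittings cancel rather than accumulate. The asymmetric signs in the hypothesis — ``$a+u$'' paired with ``$b-u^{-1}$'' rather than with ``$b+u^{-1}$'' — are exactly what make that cancellation go through, which is why the hypothesis is stated the way it is.
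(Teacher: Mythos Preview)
Your reduction and the two ``easy'' cases are correct and essentially match the paper: reducing to $zv_azv_bz$, and collapsing with the identity $zv_sz=v_{-s^{-1}}zv_{-s}k_{-s}$ when $a$ or $b$ is a symmetric unit.

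The hard case, however, is only a plan, not a proof, and the specific mechanism you propose does not evidently work. After your two splittings the word is
\[
zv_{-u}\,v_{a+u}\,z\,v_{b-u^{-1}}\,v_{u^{-1}}\,z,
\]
and no block of the shape $zv_szv_{s^{-1}}z$ is visible here: the three occurrences of $z$ are separated by two $v$-factors each, and no amount of commuting $v$'s among themselves or shuttling $k$'s produces that pattern without first increasing the $z$-count. So the ``collapsible block'' you are hoping to find is not actually there, and you correctly flag the computation as the obstacle --- but the obstacle is real for this particular route.

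The paper's trick is simpler and avoids the $zv_szv_{s^{-1}}z$ identity altogether. Split only one factor, say $v_b=v_{-u}v_{b+u}$ (after applying the hypothesis to $(b,-a)$ to get $b+u,\,a+u^{-1}\in A^\times$), and \emph{insert} $zz^{-1}$:
\[
zv_azv_bz=(zv_azv_{-u}z)\cdot(z^{-1}v_{b+u}z).
\]
The right factor equals $k_{-1}zv_{b+u}z\in CzC$ since $b+u\in A^\times\cap A^s$. For the left factor, apply your identity to $zv_{-u}z$ to obtain $zv_{a+u^{-1}}z\cdot(\text{element of }C)$, which again lies in $CzC$ because $a+u^{-1}\in A^\times\cap A^s$. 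The product is then in $CzC\cdot CzC\subseteq CzCzC$. This uses the single identity $zv_sz\in CzC$ three times (once on each of $b+u$, $-u$, $a+u^{-1}$), and the ``asymmetric signs'' you noticed are exactly what make the third exponent $a+u^{-1}$ a unit after the first collapse.
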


\begin{proof} We wish to show that $H(A)= C\cup  C  z  C\cup  C  z  C  z C$, that is, $C\cup  C  z  C\cup  C  z  C  z C$
is a subgroup of $H(A)$. This is equivalent to $z  C  z C z\subset  C\cup  C  z  C\cup  C  z  C  z C$,
which is a consequence of
\begin{equation}\label{cua}
 z { v_{-b}}  z {v_a}  z \in   C  z  C  z C,\quad a,b\in A^s.
\end{equation}
We proceed to prove (\ref{cua}). If either $a$ or $b$ is invertible, then (\ref{cua}) follows from the relation
\begin{equation}\label{dr}
 z { v_{t}}  z= {v_{-t^{-1}}}  z {k_{-t}} {v_{-t^{-1}}},\quad t\in A^\times\cap A^s.
\end{equation}
Suppose next neither $a$ nor $b$ is invertible. By assumption there is some $x\in A^\times \cap A^s$
    such that \( a+x,b-x^{-1} \in A^\times \). Now
$$
 z { v_{-b}}  z {v_a}  z= z { v_{-b}}  z {v_{-x}}  z  z^{-1}  {v_{a+x}} z.
$$
Since $a+x\in A^\times \cap A^s$, (\ref{dr}) guarantees that $$ z^{-1}  {v_{a+x}} z={k_{-1}} z  {v_{a+x}} z\in  C z C,$$ so it suffices to prove
$$
 z { v_{-b}}  z {v_{-x}}  z\in  C  z  C.
$$
Since $-x\in A^\times \cap A^s$, (\ref{dr}) gives
$$
 z { v_{-b}}  z {v_{-x}}  z=
 z { v_{-b+x^{-1}}}  z {k_{x}} {v_{x^{-1}}}.
$$
As $-b+x^{-1}\in A^\times \cap A^s$, (\ref{dr}) ensures that
$ z { v_{-b+x^{-1}}}  z {k_{x}} {v_{x^{-1}}}\in  C  z  C$,
as required.

\end{proof}

For the remainder of this section we adopt the assumptions and notation of \S\ref{dos}.

\begin{lemma}
    \label{le3}
    Suppose that \( |\overline S| >3 \) and that \( 2 \in S^\times \).
    Let \( a,b \in A^s \) be such that \( a,b \not\in A^\times \).
    Then there exists \( u \in A^\times \cap A^s \) such that
    \( a+u, b - u^{-1} \in A^\times \).
\end{lemma}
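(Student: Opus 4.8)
The plan is to reduce everything to the residue division ring $D=\overline S=S/\mathfrak{j}$ and then conclude by a counting argument in which the hypothesis $|\overline S|>3$ is essentially what is needed. First I would reduce to $D$: reduction modulo $\mathfrak{j}$ is a ring homomorphism $A\to\overline A=M(m,D)$ which, by Lemma \ref{lem:invertible}, detects invertibility in both directions, and by that lemma $\overline a,\overline b$ are still non-units. If $m=1$ then $\overline a=\overline b=0$ and $\overline u=1$ works, so assume $m\ge 2$. It suffices to find $\overline u\in\overline A^\times$ with $\overline u^{*}=\overline u$ and $\overline a+\overline u,\ \overline b-\overline u^{-1}\in\overline A^\times$: given such $\overline u$, lift it to any $u'\in A$ and put $u=\tfrac12(u'+u'^{*})$ (legitimate since $2\in S^\times$); then $u=u^{*}$, $u$ reduces to $\overline u$ (hence $u\in A^\times$ by Lemma \ref{lem:invertible}), $u^{-1}$ reduces to $\overline u^{-1}$, and $a+u,\ b-u^{-1}$ are units because their reductions are. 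So from now on I work over $D$ --- a division ring with involution, $\chr D\neq 2$, $|D|>3$ --- and, dropping bars, I seek a symmetric invertible $u\in M(m,D)$ with $a+u$ and $b-u^{-1}$ invertible, given symmetric non-invertible $a,b$.

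Let $\mathcal S$ be the set of symmetric matrices in $M(m,D)$, let $\mathcal S^\times=\mathcal S\cap M(m,D)^\times$, and $\mathcal N=\mathcal S\setminus\mathcal S^\times$. Put
\[ B_1=\{u\in\mathcal S^\times:a+u\notin\mathcal S^\times\},\qquad B_2=\{u\in\mathcal S^\times:b-u^{-1}\notin\mathcal S^\times\}; \]
I must show $B_1\cup B_2\neq\mathcal S^\times$. The translation $u\mapsto a+u$ injects $B_1$ into $\mathcal N$, and the involution $u\mapsto u^{-1}$ of $\mathcal S^\times$ carries $B_2$ bijectively onto $\{w\in\mathcal S^\times:b-w\notin\mathcal S^\times\}$, which $w\mapsto b-w$ injects into $\mathcal N$. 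So, assuming $D$ finite, $|B_1|,|B_2|\le|\mathcal N|$, and it is enough to establish the purely numerical inequality $3|\mathcal N|<|\mathcal S|$.

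This is where the hypothesis $|D|>3$ is used, and it is essentially sharp there. A finite division ring is a field, and $*$ is either trivial --- so $D=\mathbb F_q$ with $q$ odd, hence $q\ge 5$, and $\mathcal S$ is the space of symmetric matrices --- or the order-$2$ automorphism of $D=\mathbb F_{q^2}$ with $q$ odd, hence $q\ge 3$, and $\mathcal S$ is the space of $\mathbb F_q$-hermitian matrices. In the first case a union bound over the $\tfrac{q^m-1}{q-1}$ one-dimensional subspaces of $D^m$, each lying in the radical of exactly $q^{\binom m2}$ symmetric matrices, gives $|\mathcal N|/|\mathcal S|\le\tfrac{1-q^{-m}}{q-1}<\tfrac14$. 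In the second case $\GL_m(\mathbb F_{q^2})$ acts transitively by congruence on $\mathcal S^\times$, with point stabilizer the isometry group $G$ of a fixed nondegenerate hermitian form, whose order is $q^{\binom m2}\prod_{i=1}^m(q^i-(-1)^i)$; a short computation then gives $|\mathcal S^\times|/|\mathcal S|=|\GL_m(\mathbb F_{q^2})|/\bigl(|G|\,q^{m^2}\bigr)=\prod_{i=1}^m\bigl(1+(-1)^iq^{-i}\bigr)$, and since $\prod_{i\ge 2}(1+(-1)^iq^{-i})>1$ --- pair up consecutive factors, using $(1+q^{-2j})(1-q^{-2j-1})>1$ for $q\ge 3$ --- this exceeds $1-q^{-1}$, so $|\mathcal N|/|\mathcal S|<q^{-1}\le\tfrac13$. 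In both cases $3|\mathcal N|<|\mathcal S|$, hence $B_1\cup B_2\neq\mathcal S^\times$ and any $u$ outside $B_1\cup B_2$ finishes the proof.

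When $D$ is infinite there is plenty of room. For $D$ a field, $\mathcal S$ is an affine space over the fixed field, and the three loci $\{\det u=0\}$, $\{\det(a+u)=0\}$, $\{\det(bu-1)=0\}$ are proper hypersurfaces --- they miss, respectively, $u=I$, the symmetric matrix $a_S\oplus I_k$ obtained from an orthogonal splitting $D^m=W\perp\ker a$ (with $\phi_a|_W$ nondegenerate of Gram matrix $a_S$ and $k=\dim\ker a$, so that $a+(a_S\oplus I_k)=2a_S\oplus I_k$ is invertible), and $u=0$ --- so the product of the three determinant polynomials is not identically zero and has a non-root $u$, for which $u$, $a+u$ and $b-u^{-1}=u^{-1}(ub-1)$ are all invertible; the (necessarily infinite) non-commutative case is similar. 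The step I expect to be the main obstacle is precisely the sharpness of the count in the previous paragraph: in the hermitian case over $\mathbb F_9$ the crude union bound just fails, so one genuinely needs the order of the finite unitary group, and one must separately dispose of $m=1$, where the relevant ratio equals $\tfrac13$ instead of being strictly smaller.
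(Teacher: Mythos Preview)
Your reduction step---pass to $\overline S$, solve there, symmetrize the lift via $\tfrac12(u'+u'^{*})$, and invoke Lemma~\ref{lem:invertible}---is exactly the paper's argument. The paper, however, outsources the entire residue-field step to \cite[Lemma~12]{P} (whose proof it later repairs in \S5), whereas you supply that step directly. Your counting is correct and in fact a bit sharper in the hermitian case than the bound $|A^\times\cap A^s|/|A^s|>1-\tfrac{q}{q^2-1}$ used in \S5: by computing $|\mathcal S^\times|/|\mathcal S|=\prod_{i=1}^m(1+(-1)^iq^{-i})$ via the orbit--stabilizer count with the finite unitary group, you get $|\mathcal N|/|\mathcal S|<q^{-1}\le\tfrac13$ already at $q=3$, which is exactly the threshold forced by $|\overline S|>3$ together with $2\in S^\times$. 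Handling $m=1$ separately is necessary and you do it correctly.

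The one soft spot is the infinite case. For $D$ an infinite field your Zariski-type argument is fine (and could be shortened: $\det(a+\lambda I)$ is monic of degree $m$ in $\lambda\in D_0$, so some $\lambda I$ already shows $\{\det(a+u)=0\}$ is proper). But the line ``the (necessarily infinite) non-commutative case is similar'' is not a proof: over a skew field there is no polynomial determinant to feed into a hypersurface argument, so a different mechanism (e.g.\ Dieudonn\'e determinant plus a suitable density or rank argument, or an explicit construction) is needed. The paper sidesteps this by citing \cite{P}, so you are not worse off than the paper in spirit, but if you want a fully self-contained proof at the stated generality you should fill this in.
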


\begin{proof}
    By Lemma \ref{lem:invertible}, \( \overline a, \overline b
    \not\in \overline A^\times\). Now, since \( |\overline S| >3 \),
    \cite[Lemma 12]{P} ensures that there is some
    \( u \in A \) such that \( \overline u \in \overline A^\times
    \cap \overline A^s\) and such that \( \overline a
    +\overline u, \overline b - \overline u^{-1} \in \overline A^\times\).
    Replacing \( u  \) by \( \frac12(u+u^*) \), we can assume that
    \( u \in A^s \). Now Lemma \ref{lem:invertible} ensures that
    \( u \in A^\times \) and that \( a+u,b-u^{-1} \in A^\times \)
    as required.
\end{proof}

\begin{theorem}
    \label{thm:BruhatPres}
    If \( |\overline{S}| >3 \) and $2\in S^\times$ then $\theta :H(A)\to SL_*(2,A)$ is an isomorphism.
\end{theorem}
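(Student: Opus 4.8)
The plan is to show that $\theta\colon H(A)\to SL_*(2,A)$ is both surjective and injective. Surjectivity is immediate: by Theorem~\ref{thm:decomp}, the hypothesis $2\in S^\times$ guarantees that $SL_*(2,A)=SSL_*(2,A)$, and $\theta$ maps $H(A)$ onto $SSL_*(2,A)$ by construction. So the entire content is injectivity of $\theta$.

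For injectivity, the strategy is to reduce the $z$-length of an arbitrary element of $H(A)$ and then invoke Lemma~\ref{le1}. First I would check that the hypotheses of Lemma~\ref{le3} are met: $|\overline S|>3$ and $2\in S^\times$ are exactly what is assumed. Lemma~\ref{le3} then says that for all non-units $a,b\in A^s$ there is $u\in A^\times\cap A^s$ with $a+u,\,b-u^{-1}\in A^\times$, which is precisely the hypothesis of Lemma~\ref{le2}. Hence Lemma~\ref{le2} applies and every element of $H(A)$ has $z$-length at most $2$. Now suppose $y\in H(A)$ with $\theta(y)=1$. Since $y$ has $z$-length at most $2$, Lemma~\ref{le1} gives $y=1$. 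Therefore $\ker\theta=1$, and combined with surjectivity, $\theta$ is an isomorphism.

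I do not anticipate a genuine obstacle here, since all the real work has been isolated into the preceding lemmas; the proof is essentially an assembly of Lemmas~\ref{le1}, \ref{le2}, \ref{le3} together with Theorem~\ref{thm:decomp}. The one point that requires a moment's care is checking that the hypothesis ``$a,b\not\in A^\times$'' in Lemmas~\ref{le2} and~\ref{le3} is not a loss of generality: in the reduction carried out in Lemma~\ref{le2}, the case where $a$ or $b$ \emph{is} a unit is handled separately via relation~(\ref{dr}), so the two lemmas genuinely combine to cover all $a,b\in A^s$. Since that verification is already internal to the statements of those lemmas, nothing further is needed, and the proof is short.

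\begin{proof}
Surjectivity of $\theta$ follows from Theorem~\ref{thm:decomp}: since $2\in S^\times$, we have $SL_*(2,A)=SSL_*(2,A)$, and $\theta$ maps $H(A)$ onto $SSL_*(2,A)$ by construction. For injectivity, note that the hypotheses $|\overline S|>3$ and $2\in S^\times$ are exactly those of Lemma~\ref{le3}, so for every pair of non-units $a,b\in A^s$ there exists $u\in A^\times\cap A^s$ with $a+u,\,b-u^{-1}\in A^\times$. This is the hypothesis of Lemma~\ref{le2}, so every element of $H(A)$ has $z$-length at most $2$. If $y\in H(A)$ satisfies $\theta(y)=1$, then Lemma~\ref{le1} forces $y=1$. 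Hence $\ker\theta=1$, and $\theta$ is an isomorphism.
\end{proof}
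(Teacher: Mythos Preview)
Your proof is correct and follows exactly the same approach as the paper: surjectivity via Theorem~\ref{thm:decomp}, then injectivity by combining Lemmas~\ref{le2} and~\ref{le3} to bound the $z$-length by~$2$ and invoking Lemma~\ref{le1}. Your write-up is slightly more explicit about how the conclusion of Lemma~\ref{le3} feeds into the hypothesis of Lemma~\ref{le2}, but the argument is otherwise identical.
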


\begin{proof} By Theorem \ref{thm:decomp}, \( \theta \) is an epimorphism. Suppose $y\in H(A)$ satisfies $\theta(y)=1$.
By Lemmas \ref{le2} and \ref{le3}, $y$ has \(  w \) length at most 2, so Lemma \ref{le1} implies $y=1$.
\end{proof}

\section{A correction}

In this section we correct an error in the proof \cite[Lemma 12]{P}.
As noted in loc. cit. there are two possibilities for
the involution on \( A \) - either it is induced from the trivial involution
of \( S \) or from the Frobenius involution of \( S \). In either case, it is
shown that
\begin{equation}
    \frac{|A^\times \cap A^s|}{|A^s|} >1-\frac{q}{q^2-1}.
    \label{eqn:invertibles}
\end{equation}
The following is part of \cite[Lemma 12]{P}.
\begin{lemma}
    Suppose that \( q>3 \)
    Let \( a,b \in A^s \) be non units. There is some \( x \in A^\times \cap A^s \)
    such that \( a+x ,b - x^{-1} \in A^\times  \).
\end{lemma}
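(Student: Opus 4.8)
The plan is to replace the (generically stated, but delicate) counting in \cite[Lemma 12]{P} by a short estimate with two ``bad'' sets. Put $X = A^\times\cap A^s$, and note that $X$ is stable under inversion, since $(x^{-1})^*=(x^*)^{-1}=x^{-1}$ for $x\in X$. We seek $x\in X$ avoiding both $B_1=\{x\in X:\ a+x\notin A^\times\}$ and $B_2=\{x\in X:\ b-x^{-1}\notin A^\times\}$. If we can show $|B_1|\le|A^s|-|X|$ and $|B_2|\le|A^s|-|X|$, then the number of admissible $x$ is at least $|X|-|B_1|-|B_2|\ge 3|X|-2|A^s|$, and the lemma reduces to checking that this quantity is positive.

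For $B_1$: if $x\in X$ then $a,x\in A^s$, so $a+x\in A^s$; if moreover $a+x\notin A^\times$ then $a+x\in A^s\setminus X$. Since $x\mapsto a+x$ is injective on $A$, this gives $|B_1|\le|A^s\setminus X|=|A^s|-|X|$. For $B_2$: inversion is a bijection of $X$, so writing $y=x^{-1}\in X$ the defining condition reads $b-y\notin A^\times$; again $b-y\in A^s\setminus X$, and $y\mapsto b-y$ is injective, so $|B_2|\le|A^s|-|X|$. (The hypothesis $a,b\notin A^\times$ is not needed for these two bounds, but it is harmless, and it is of course what makes the conclusion nontrivial.)

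Hence there are at least $3|X|-2|A^s|$ elements $x\in X$ with $a+x,\ b-x^{-1}\in A^\times$, and it suffices to prove $|X|/|A^s|>2/3$, i.e.\ $|A^\times\cap A^s|/|A^s|>2/3$. By \eqref{eqn:invertibles} it is enough that $1-\tfrac{q}{q^2-1}\ge\tfrac23$, equivalently $q^2-3q-1\ge 0$; since $q>3$ we have $q\ge 4$, whence $q^2-3q-1\ge 3>0$, and therefore $|X|/|A^s|>2/3$. As $3|X|-2|A^s|$ is then a positive integer, it is at least $1$, and an admissible $x$ exists. The main obstacle is not any single step but getting the two-bad-set bookkeeping exactly right and matching it against the density estimate \eqref{eqn:invertibles}: it is precisely in the final inequality $q^2-3q-1\ge 0$ that $q=3$ fails and $q>3$ must be assumed, which is the point of the present correction. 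The two possibilities for the involution (trivial on $S$, or Frobenius) enter only through \eqref{eqn:invertibles} and require no separate treatment here.
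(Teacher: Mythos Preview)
Your proof is correct and follows essentially the same counting strategy as the paper: both arguments use the density estimate \eqref{eqn:invertibles} together with inclusion--exclusion inside $A^s$ to reduce the existence of $x$ to the inequality $3|A^\times\cap A^s|>2|A^s|$, equivalently $1-\tfrac{3q}{q^2-1}>0$, which holds precisely for $q>3$. Your phrasing in terms of the two ``bad'' sets $B_1,B_2\subset A^\times\cap A^s$ is the complement of the paper's ``good'' sets $X,Y\subset A^s$, but the bounds $|B_i|\le |A^s|-|A^\times\cap A^s|$ and the paper's bounds on $|X|,|Y|$ encode exactly the same information and yield the identical threshold.
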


\begin{proof}
    Let \( X = (A^\times \cap A^s) - a \) and let \( Y = (A^\times \cap A^s +b)^{-1} \).
    Observe that
    \begin{equation}
        \label{eqn:Y}
        Y = \left( (A^\times \cap A^s +b) \cap (A^\times \cap A^s) \right)^{-1}.
    \end{equation}
    In particular \( Y\subset A^\times \cap A^s \). Thus to prove the result, it suffices to
    show that \( X \cap Y  \) is not empty. Now by (\ref{eqn:invertibles}),
    \begin{equation}
        \label{eqn:one}
        \frac{|X|}{|A^s|} > 1- \frac{q}{q^2-1}.
    \end{equation}

    By (\ref{eqn:Y}), \( |Y|= |Y^{-1}| = |(A^\times \cap A^s +b) \cap (A^\times \cap A^s)|\).
    Now using (\ref{eqn:invertibles}) again, together with
    the principle of inclusion and exclusion, we see that
    \begin{equation}
        \frac{|Y|}{|A^s|} > 1 - \frac{2q}{q^2-1}.
        \label{eqn:two}
    \end{equation}
    Combining (\ref{eqn:one}) and (\ref{eqn:two}) we see that
    \begin{equation*}
        \frac{|X\cap Y|}{|A^s|} > 1-\frac{3q}{q^2-1}.
    \end{equation*}
    Now for \( q>3 \), \( 1 - \frac{3q}{q^2-1} >0 \) whence the required conclusion
    follows immediately.
\end{proof}
In \cite{P} it is asserted that \( |Y| = |A^\times \cap A^s| \) which
is not necessarily true. For example, take \( q = 5 \), \( m =2  \) and \( * \) to
be induced by the trivial involution on \( \mathbb F_5 \). If
\( b = \begin{pmatrix} 1& 0 \\ 0 & 0 \end{pmatrix}\), it is clear that
\( (A^\times \cap A^s+b)^{-1} \subsetneq A^\times \cap A^s  \).
For example, \( \begin{pmatrix} 1& 4 \\ 4 & 2 \end{pmatrix} \in  A^\times  \cap A^s  \)
but \( \begin{pmatrix} 1& 4 \\ 4 & 2 \end{pmatrix}\) is not in  \((A^\times  \cap A^s+b)^{-1}  \).
Therefore
\( |(A^\times \cap A^s+b)^{-1}| < | A^\times \cap A^s | \) in this case.

\section{A representation defined by means of the Bruhat presentation}\label{ilu}

Let $R$ be a finite, commutative, local ring such that $2\in R^\times$. We assume also that $R$ has a unique minimal ideal $\n$.
This is automatic if $R$ is principal. This class of rings has been extensively investigated (see \cite{CG}, \cite{La}, \cite{Ho} and \cite{Wo}).
We write $\m$ and $F_q=R/\m$ for the maximal ideal and residue field of $R$, respective. We then have $|R|=q^d$ for a unique $d>0$.


A group homomorphism $R^+\to\C^\times$ is said to be primitive if its kernel contains no ideals of $R$ but $(0)$.
The number of non primitive group homomorphisms $R^+\to\C^\times$ is $|R/\n|$. Thus, $R^+$ has $|R|-|R/\n|>0$ primitive group homomorphisms.
For the remainder of this section and throughout the following section we fix a primitive group homomorphism $\la:R^+\to\C^\times$.
Note that for $k\in R^\times$, the group homomorphism $\la[k]:R^+\to\C^\times$, given by
$$
r\mapsto \la(kr),
$$
is also primitive. Associated to $\lambda$, we have the following quadratic Gauss sum
$$
G(\la)=\underset{r\in R}\sum\la(r^2).
$$
A brief history of these sums can be found in \cite{S}. We will appeal to two facts concerning $G(\la)$, both proven in \cite[Theorem 6.2]{S}.
To state these facts, we require the following group homomorphism $$\mu:R^\times\to \{1,-1\}\subset \C^\times,$$
defined as follows. We let $\mu$ be the trivial homomorphism if $d$ is even, and
given by
$$
\mu(r)=\begin{cases} 1 & \text{ if }r\text{ is a square in }R^\times, \\
-1 & \text{ if }r\text{ is not a square in }R^\times.
\end{cases}
$$
if $d$ is odd. We then have
\begin{equation}
\label{ga1}
G(\la)^2=\mu(-1)|R|
\end{equation}
and
\begin{equation}
\label{ga2}
G(\la[k])=\mu(k)G(\la),\quad k\in R^\times.
\end{equation}
In particular, $G(\la)\neq 0$, a fact to be used implicitly below. For an interesting use of the Weil representation
to compute the actual sign in a quadratic Gauss sum see \cite{GHH}.

Let $V$ be a free $R$-module of finite rank $2n$
and let $\langle~,~\rangle:V\times V\to R$ be an alternating form.
We assume that $\langle~,~\rangle$ is nondegenerate, in the sense that the associated linear map $V\to V^*$, given by
$v\mapsto \langle v,~\rangle$, is an isomorphism. Let $\Sp=\Sp(2n,R)$ stand for the associated symplectic group.
It is easy to see that $V$ admits a basis
$\B=\{u_1,\dots,u_n,v_1,\dots,v_n\}$ which is symplectic, in the sense that
$$
\langle u_i,v_j\rangle=\delta_{i,j},\; \langle u_i,u_j\rangle=0=\langle v_i,v_j\rangle.
$$
Let $M$ and $N$ be the span of $u_1,\dots,u_n$ and $v_1,\dots,v_n$, respectively, and consider the subgroups $\Sp_{M,N}$
and $\Sp^M$ of $\Sp$ defined as follows:
$$
\Sp_{M,N}=\{g\in\Sp\,|\, gM=M\text{ and }gN=N\},\; \Sp^M=\{g\in\Sp\,|\, gu=u\text{ for all }u\in M\}.
$$
We also consider the symplectic transformation $s\in\Sp$, defined by
$$
u_i\mapsto v_i,\; v_i\mapsto -u_i,\quad 1\leq i\leq n.
$$
Let $X$ be a complex vector space of dimension $|R|^n$ and basis indexed by $N$, say $(e_v)_{v\in N}$.

As an illustration of the use of the Bruhat presentation of $SL_*(2,A)$, we have the following result.

\begin{theorem}\label{m1} Suppose that $q>3$. Then there is one and only one group representation $W:\Sp\to\GL(X)$ such that
\begin{equation}
\label{f1}
W(g)e_v=\mu(\det g|_N)e_{gv},\quad v\in N, g\in \Sp_{M,N},
\end{equation}
\begin{equation}
\label{f2}
W(g)e_v=\la(\langle gv,v\rangle) e_{v},\quad v\in N,g\in\Sp^M,
\end{equation}
\begin{equation}
\label{f3}
W(s)e_v=G(\la)^{-n} \underset{w\in N}\sum \la\big(-2\underset{1\leq i\leq n}\sum \langle u_i,w\rangle \langle u_i,v\rangle\big) e_{w},\quad v\in N.
\end{equation}
\end{theorem}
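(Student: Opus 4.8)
The plan is to reduce to the block model of \S\ref{dos}. Since $S=R$ is commutative and $*$ is trivial, $A=M(n,R)$ carries the transpose involution and $\Sp=\Sp(2n,R)=U(2n,R)\cong SL_*(2,A)$. Writing elements of $\Sp$ in $n\times n$ blocks relative to $\B$, so that $M$ is the first block of coordinates, $N$ the second, and the Gram matrix is $J$, a direct inspection shows that $\Sp_{M,N}$ equals the diagonal torus $D=\{h_t:t\in\GL(n,R)\}$, that $\Sp^M$ equals $N=\{u_r:r\in A^s\}$, and that the transformation $s$ equals $w^{-1}=-w$. In particular $s$, $\Sp_{M,N}$ and $\Sp^M$ generate the same subgroup of $\Sp$ as the Bruhat elements $w,h_t,u_r$.

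Uniqueness follows at once: since $q>3$ and $2\in R^\times$, Theorem \ref{thm:decomp} shows that the Bruhat elements generate $SL_*(2,A)$, so a representation satisfying (\ref{f1})--(\ref{f3}) is determined on a generating set of $\Sp$.

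For existence I would use the presentation. Define operators $K_t,V_r,Z$ on $X$: let $K_t$ and $V_r$ be the operators prescribed by (\ref{f1}) and (\ref{f2}) for $g=h_t$ and $g=u_r$ (so $K_te_v=\mu(\det t)e_{v'}$ with $v'$ corresponding to $(t^{T})^{-1}y$ when $v$ corresponds to $y\in R^n$, and $V_re_v=\la(y^{T}ry)e_v$), and let $Z$ be the inverse of the operator appearing on the right of (\ref{f3}) — this is forced, since $z$ must map to $W(w)=W(s)^{-1}$. One then checks that $z\mapsto Z,\ k_t\mapsto K_t,\ v_r\mapsto V_r$ respects the six defining relations of $H(A)$. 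The four relations $k_sk_t=k_{st}$, $v_qv_r=v_{q+r}$, $k_tv_r=v_{trt^*}k_t$ and $zk_t=k_{t^{*-1}}z$ are routine, resting only on multiplicativity of $\det$ and of $\mu$, additivity of $\la$, and elementary substitutions in the quadratic forms $y\mapsto y^{T}ry$. The relation $z^2=k_{-1}$ reduces, after one Fourier-type computation, to the orthogonality $\sum_{x\in R}\la(xc)=|R|\,\delta_{c,0}$ (valid because $\la$ is primitive and $2\in R^\times$) together with $G(\la)^2=\mu(-1)|R|$ from (\ref{ga1}); this also exhibits $Z$ as invertible.

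The main obstacle is the last relation $v_tzv_{t^{-1}}zv_t=zk_{-t^{-1}}$, for $t\in\GL(n,R)$ symmetric. Applying the candidate operators to a basis vector $e_v$ and completing the square in the exponent (legitimate because $t$ is invertible and $2\in R^\times$) leaves a quadratic Gauss sum $\sum_{y\in R^n}\la(y^{T}t'y)$ for a symmetric invertible $t'$, and the identity closes only once one knows
\[
\sum_{y\in R^n}\la(y^{T}t'y)=\mu(\det t')\,G(\la)^n .
\]
I would prove this by diagonalizing the form $y\mapsto y^{T}t'y$ by congruence over $R$ — possible since $2\in R^\times$ — which factors the sum into one-variable Gauss sums, and then applying (\ref{ga2}), using that $\mu(\det t')$ is unaffected by multiplying $\det t'$ by a square. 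With all six relations verified, Theorem \ref{thm:BruhatPres} (applicable as $q>3$ and $2\in R^\times$) gives that $\theta\colon H(A)\to SL_*(2,A)$ is an isomorphism, so the assignment descends to a representation $W\colon\Sp\to\GL(X)$. Finally (\ref{f1}) and (\ref{f2}) hold on all of $\Sp_{M,N}$ and $\Sp^M$ precisely because these subgroups coincide with $D$ and $N$, on which $W$ agrees with $K_\bullet$ and $V_\bullet$ by construction; and (\ref{f3}) holds at $s$ since $W(s)=W(w)^{-1}=Z^{-1}$ is, by definition of $Z$, the operator in (\ref{f3}). The only bookkeeping point requiring care is the identification $s=w^{-1}$ rather than $w$, which is why $z$ is sent to the inverse of the operator in (\ref{f3}).
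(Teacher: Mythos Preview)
Your proposal is correct and follows essentially the same route as the paper: identify $\Sp$ with $SL_*(2,A)$ for $A=M(n,R)$ with the transpose involution, note that $\Sp_{M,N}$, $\Sp^M$ and $s$ correspond to the Bruhat elements, invoke the Bruhat presentation (Theorem~\ref{thm:BruhatPres}) for existence, and verify the relations via orthogonality for $z^2=k_{-1}$ and a Gauss--sum computation (diagonalize the symmetric form, then apply (\ref{ga1})--(\ref{ga2})) for the last relation. The only cosmetic difference is that the paper first rewrites the presentation in terms of $\sigma=s=w^{-1}=h_{-1}w$ (relations (\ref{r1})--(\ref{r6})) and verifies those, whereas you keep the original $H(A)$ relations and send $z$ to $W(s)^{-1}$; the underlying computations are identical. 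One small point: the diagonalization of a symmetric invertible matrix by congruence over the local ring $R$ is not just the field case---the paper appeals to \cite[Theorem~2]{CQS} to lift the field result to $R$, and you should do the same rather than assert it.
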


\begin{proof} Set $A=M(n,R)$ and let $*$ be the involution of $A$ given by
$$
B\mapsto B',\text{ the transpose of }B.
$$
Fix
$$
J=\left(
    \begin{array}{cc}
      0 & 1 \\
      -1 & 0 \\
    \end{array}
  \right)\in M(2,A)
$$
and let $SL_*(2,A)$ stand for the unitary group corresponding to $J$ and~$*$, as indicated in~\S\ref{uno}.
Note that $SL_*(2,A)\cong\Sp$. Indeed, for $g\in\Sp$, let
$$M_\B(g)=\left(
            \begin{array}{cc}
              B & C \\
              D & E \\
            \end{array}
          \right)\in \GL(2,A)
$$
stand for the matrix of $g$ relative to $\B$, partitioned into 4 blocks of size $n\times n$. Obviously,
$g\mapsto M_\B(g)$ is a group monomorphism $\Sp\to \GL(2,A)$. Its image is easily seen to be $SL_*(2,A)$,
rendering a group isomorphism  $\Omega:\Sp\to SL_*(2,A)$. Under this isomorphism, $\Sp_{M,N}$ corresponds
to the subgroup of $SL_*(2,A)$ consisting of all elements
$$
h_T=\left(
    \begin{array}{cc}
      T & 0 \\
      0 & (T')^{-1} \\
    \end{array}
  \right),\quad T\in\GL(n,R),
$$
and $\Sp^M$ corresponds to the subgroup of $SL_*(2,A)$ consisting of all elements
$$
u_S=\left(
    \begin{array}{cc}
      1 & S \\
      0 & 1 \\
    \end{array}
  \right),\quad S\in M(n,R), S'=S,
$$
and $s$ corresponds to
$$
\si=\left(
    \begin{array}{cc}
      0 & -1 \\
      1 & 0 \\
    \end{array}
  \right).
$$
In view of these identifications, it suffices to prove the theorem for $SL_*(2,A)$ in terms of all $h_T$, $u_S$ and~$\si$.
Now, every $v\in N$ can be written in one and only one way in the form
$$
v=a_1v_1+\cdots+a_n v_n,
$$
where
$$
a=\left(
    \begin{array}{c}
      a_1 \\
      \vdots \\
      a_n \\
    \end{array}
  \right)\in R^n
$$
and for our matrix calculations it will be convenient
to assume that $X$ has a basis formed by all $e_a$, where $a\in R^n$. Then (\ref{f1})-(\ref{f3}) translate as follows.
\begin{equation}
\label{g1}
W(h_T)e_a=\mu(\det T)e_{(T')^{-1}a},\quad a\in R^n, T\in \GL(n,R),
\end{equation}
\begin{equation}
\label{g2}
W(u_S)e_a=\la(a'Sa) e_{a},\quad a\in R^n,S\text{ symmetric in } M(n,R),
\end{equation}
\begin{equation}
\label{g3}
W(\sigma)e_a=G(\la)^{-n}\underset{b\in R^n}\sum\la(-2b'a)e_b.
\end{equation}

Observe that
$$
\si=w^{-1}=w^2w=h_{-1}w,
$$
where $h_{-1}$ is a central involution of $SL_*(2,A)$. It follows from Theorem \ref{thm:BruhatPres} that $SL_*(2,A)$ is generated
by all $h_T$, $u_S$ and $\si$, subject to the following relations:
\begin{equation}
\label{r1}
h_{T_1}h_{T_2}=h_{T_1T_2},\quad T\in\GL(n,R),
\end{equation}
\begin{equation}
\label{r2}
u_{S_1}h_{S_2}=u_{S_1+S_2},\quad S\text{ symmetric in } M(n,R),
\end{equation}
\begin{equation}
\label{r3}
\si^2=h_{-1},
\end{equation}
\begin{equation}
\label{r4}
h_{T}u_{S}h_{T^{-1}}=u_{TST'},\quad T\in\GL(n,R),S\text{ symmetric in } M(n,R),
\end{equation}
\begin{equation}
\label{r5}
\si h_{T}=h_{(T')^{-1}}\si,\quad T\in\GL(n,R),
\end{equation}
\begin{equation}
\label{r6}
\si u_T\si=u_{-T^{-1}}\si h_T u_{-T^{-1}}.
\end{equation}
Since all $h_T$, $u_S$ and $\si$ generate $SL_*(2,A)$, it is clear that there can be at most one representation $W:SL_*(2,A)\to\GL(X)$
satisfying (\ref{g1})-(\ref{g3}). To see that such a representation exists we need to show that each linear operator appearing in (\ref{g1})-(\ref{g3}) is actually invertible and that $W$ preserves the relations (\ref{r1})-(\ref{r6}). That each $W(h_T)$ and $W(u_S)$
is an invertible linear operator of $X$ is clear. It is immediately verified that $W$ preserves (\ref{r1}), (\ref{r2}) and (\ref{r4}).

We next show that
$$
W(\si)W(\si)=W(h_{-1}),
$$
thereby proving that $W(\si)$ is invertible and that $W$ preserves (\ref{r3}). According to (\ref{g3}), we have
$$
W(\si)e_{a}=G(\la)^{-n}\underset{b\in R^n}\sum\la(-2b'a)e_b.
$$
Therefore,
$$
W(\si)W(\si)e_a=G(\la)^{-2n}\underset{b}\sum \la(-2b'a)\left( \underset{c}\sum \la(-2b'c)\right)e_{c},
$$
that is
$$
W(\si)W(\si)e_a=G(\la)^{-2n}\underset{c}\sum\underset{b}\sum \la(-2b'(a+c))e_{c}.
$$
For fixed $a,c\in R^n$, consider the linear character $\nu:R^n\to\C^\times$ given by
$$
b\mapsto \la(-2b'(a+c)).
$$
Since $\la$ is primitive, $\nu$ is trivial if and only if $a+c=0$. Thus,
$$
\underset{b}\sum  \la(-2b'(a+c))=\begin{cases} 0 & \text{ if }a+c\neq 0,\\
|R|^n & \text{ if }a+c=0.
\end{cases}
$$
It follows that
$$
W(\si)W(\si)e_a=G(\la)^{-2n}|R|^n e_{-a}.
$$
Using (\ref{ga1}), this becomes
$$
W(\si)W(\si)e_a=\mu((-1)^n) e_{-a}=W(h_{-1})e_a.
$$

We next verify that
$$
W(\si) W(h_{T})=W(h_{(T')^{-1}})W(\si),\quad T\in\GL(n,R).
$$
According to (\ref{g1}) and (\ref{g3}), we have
$$
W(\si) W(h_{T})e_a=G(\la)^{-n}\mu(\det T)\underset{b}\sum\la(-2a'T^{-1}b)e_b
$$
and
$$
\begin{aligned}
W(h_{(T')^{-1}})W(\si) e_a &=G(\la)^{-n}\mu(\det T)\underset{b}\sum\la(-2a'b)e_{Tb}\\
&=G(\la)^{-n}\mu(\det T)\underset{b}\sum\la(-2a'T^{-1}b)e_b.
\end{aligned}
$$
This proves that $W$ preserves (\ref{r5}). We finally verify that
$$
W(\si) W(u_T)W(\si)=W(u_{-T^{-1}})W(\si) W(h_T) W(u_{-T^{-1}}),\; T\text{ symmetric in }\GL(n,R).
$$
By (\ref{g3}), we have
$$
W(\si)e_a=G(\la)^{-n}\underset{b}\sum\la(-2b'a)e_b.
$$
Using first (\ref{g2}) and then (\ref{g3}) once more, we obtain
$$
W(u_T)W(\si)e_a=G(\la)^{-n}\underset{b}\sum\la(-2b'a)\la(b'Tb)e_b
$$
and
$$
W(\si)W(u_T)W(\si)e_a=G(\la)^{-2n}\underset{c}\sum\underset{b}\sum\la(-2b'a)\la(b'Tb)\la(-2b'c)e_c.
$$
We now ``complete squares", noting that
$$
\begin{aligned}
b'Tb-2b'(a+c) & =b'Tb-2b'TT^{-1}(a+c)+(a+c)'T^{-1}(a+c)-(a+c)'T^{-1}(a+c)\\
&=(b-T^{-1}(a+c))'T(b-T^{-1}(a+c))-(a+c)'T^{-1}(a+c).
\end{aligned}
$$
Since $T^{-1}$ is also symmetric, it follows that $W(\si)W(u_T)W(\si)e_a$ is equal to
$$
G(\la)^{-2n}\underset{c}\sum\underset{b}\sum\la\left([b-T^{-1}(a+c)]'T[b-T^{-1}(a+c)]\right)\la\left(-(a+c)'T^{-1}(a+c)\right)e_c.
$$
As $b$ runs through $R^n$ so does $b-T^{-1}(a+c)$ and therefore
$$
\underset{b}\sum\la([b-T^{-1}(a+c)]'T[b-T^{-1}(a+c)]=\underset{b}\sum\la(b'Tb).
$$
Now $T$ is symmetric and $q$ is odd. It follows that $T$ is congruent to a diagonal matrix $D=\mathrm{diag}(t,1,\dots,1)$,
where $t\in R^\times$. The classical field case $R=F_q$ of this result is well-known and can be found in \cite[Chapter 6]{J}.
The ring case follows from the field case by means of \cite[Theorem 2]{CQS}. Note that $r^2 t=\det(T)$ for some unit $r\in R$. Since $T$ is congruent to $D$, we infer
$$
\underset{b}\sum\la(b'Tb)=G(\la)^{n-1}G(\la[t]).
$$
Now using (\ref{ga2}) and the fact that $\mu(t)=\mu(\det T)$, we deduce that
$$
\underset{b}\sum\la([b-T^{-1}(a+c)]'T[b-T^{-1}(a+c)])=G(\la)^n \mu(\det T).
$$
All in all, we conclude that
$$
W(\si)W(u_T)W(\si)e_a=G(\la)^{-n}\mu(\det T)\underset{c}\sum\la(-(a+c)'T^{-1}(a+c)))e_c.
$$

On the other hand, according to (\ref{g2}), we have
$$
W(u_{-T^{-1}})e_a=\la(-a'T^{-1}a)e_a
$$
Using next (\ref{g1}) and then (\ref{g3}) we obtain
$$
W(h_T)W(u_{-T^{-1}})e_a=\mu(\det T)\la(-a'T^{-1}a)e_{T^{-1}a}
$$
and
$$
W(\si)W(h_T)W(u_{-T^{-1}})e_a=G(\la)^{-n}\mu(\det T)\underset{c}\sum\la(-2c' T^{-1}a)\la(-a'T^{-1}a)e_c.
$$
Applying next (\ref{g2}), we see that $W(u_{-T^{-1}})W(\si)W(h_T)W(u_{-T^{-1}})e_a$ is equal to
$$
G(\la)^{-n}\mu(\det T)\underset{c}\sum\la(-2c' T^{-1}a)\la(-a'T^{-1}a)\la(-c'T^{-1}c)e_c.
$$
Now
$$
-a'T^{-1}a-c'T^{-1}c-2c' T^{-1}a=-(a+c)'T^{-1}(a+c),
$$
so
$$
W(u_{-T^{-1}})W(\si)W(h_T)W(u_{-T^{-1}})e_a=G(\la)^{-n}\mu(\det T)\underset{c}\sum\la(-(a+c)'T^{-1}(a+c))e_c.
$$
This shows that (\ref{r6}) is preserved by $W$ and completes the proof of the theorem.
\end{proof}

\section{$W$ is a Weil representation}

Given a finite group $G$, we let $\Irr(G)$ stand for the set of all complex irreducible characters of $G$.
Given $G$-modules $P$ and $Q$, we will write
$$
[P,Q]_G=\dim_\C\Hom_{\C G}(P,Q),
$$
Given a positive integer $m$, we will write $mP$ for the $G$-module
$$
mP=P\oplus\cdots\oplus P\;(m\text{ summands}).
$$

Let $H=R\times V$ be the Heisenberg group, with multiplication
$$
(r,u)(s,v)=(r+s+\langle u,v\rangle,u+v).
$$
We identify the center $Z(H)=(R,0)$ of $H$ with $R^+$. Note that $\Sp$ acts on $H$ by means of automorphisms via
$$
{}^g (r,u)=(r,gu).
$$
Given an $R$-submodule $U$ of $V$, we consider the normal subgroup $H(U)=(R,U)$ of $H$.

\begin{prop}\label{schro} The Heisenberg group $H$ has a
unique irreducible module, up to isomorphism, such that $Z(H)$ acts on it via $\lambda$.
Its dimension is equal to $\sqrt{|V|}$.
\end{prop}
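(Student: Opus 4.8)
The plan is to establish the Stone--von Neumann phenomenon for this finite Heisenberg group by the standard induction-and-Mackey argument, adapted to the local ring setting. First I would fix the Lagrangian submodule $N\subset V$ supplied by the symplectic basis $\B$, so that $V=M\oplus N$ with $M,N$ totally isotropic of rank $n$ and $\langle~,~\rangle$ pairing $M$ with $N$ perfectly. The subgroup $H(N)=(R,N)$ is then abelian (since $N$ is isotropic), normal in $H$, and contains $Z(H)=R^+$. Extend $\lambda$ to a linear character $\tilde\lambda$ of $H(N)$ by $\tilde\lambda(r,v)=\lambda(r)$ for $v\in N$; this is well defined because $H(N)$ is abelian. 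Define the candidate module $P=\mathrm{Ind}_{H(N)}^{H}\tilde\lambda$. Its dimension is $[H:H(N)]=|V|/|N|=|M|=\sqrt{|V|}$, which matches the asserted dimension, and $Z(H)$ acts on $P$ via $\lambda$ because $Z(H)\subseteq H(N)$ acts on $\tilde\lambda$ that way and induction does not disturb the central character. A natural model for $P$ is exactly the space $X$ with basis $(e_v)_{v\in N}$ appearing in Theorem~\ref{m1}, realized as functions on $M\cong H/H(N)$; I would record this so the existence half is concrete.

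For irreducibility of $P$ I would compute $[P,P]_H$ using Mackey's formula: $[\mathrm{Ind}_{H(N)}^{H}\tilde\lambda,\mathrm{Ind}_{H(N)}^{H}\tilde\lambda]_H=\sum_{g\in H(N)\backslash H/H(N)}[\tilde\lambda,{}^{g}\tilde\lambda]_{H(N)\cap {}^{g}H(N)}$. Since $H(N)$ is normal, every double coset is a single coset and $H(N)\cap{}^{g}H(N)=H(N)$, so the sum runs over $g\in H/H(N)\cong M$ and the $g$-th term is $1$ if ${}^{g}\tilde\lambda=\tilde\lambda$ on $H(N)$ and $0$ otherwise. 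A direct conjugation computation in $H$ shows that for $u\in M$, ${}^{(0,u)}\tilde\lambda$ differs from $\tilde\lambda$ by the character $v\mapsto\lambda(2\langle u,v\rangle)$ on $(0,v)\in H(N)$ (up to the sign convention in the multiplication, the essential point is the bilinear expression $\langle u,v\rangle$). Because $\langle~,~\rangle$ is nondegenerate \emph{and} $2\in R^\times$, the map $v\mapsto\lambda(2\langle u,v\rangle)$ is the trivial character of $N$ only when $u$ pairs trivially with all of $N$, i.e. only when $u=0$. Here is where primitivity of $\lambda$ enters decisively: a priori $\lambda(2\langle u,v\rangle)$ could be trivial for all $v\in N$ without $\langle u,v\rangle$ itself vanishing, but since $2\langle u,-\rangle\colon N\to R$ is $R$-linear its image is an ideal of $R$, and a primitive $\lambda$ kills no nonzero ideal, forcing that image to be $0$ and hence $u=0$. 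Thus exactly one term contributes and $[P,P]_H=1$, so $P$ is irreducible.

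For uniqueness, let $Q$ be any irreducible $H$-module on which $Z(H)$ acts via $\lambda$. Restrict $Q$ to $H(N)$; since $H(N)$ is abelian, $Q|_{H(N)}$ decomposes into linear characters, and each of these must restrict to $\lambda$ on $Z(H)$. The linear characters of $H(N)$ lying over $\lambda$ are a single $H$-orbit (again using nondegeneracy and $2\in R^\times$ via the computation above, together with primitivity of $\lambda$ to pin down the stabilizer), so $Q$ contains $\tilde\lambda$ (after replacing $\tilde\lambda$ by an $H$-conjugate, which changes nothing up to isomorphism of the induced module). By Frobenius reciprocity $0\neq[\tilde\lambda,Q|_{H(N)}]_{H(N)}=[\mathrm{Ind}_{H(N)}^{H}\tilde\lambda,Q]_H=[P,Q]_H$, and since both $P$ and $Q$ are irreducible this forces $Q\cong P$. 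This proves existence, uniqueness, and the dimension count simultaneously.

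The main obstacle, and the only place the hypotheses on $R$ and $\lambda$ do real work, is the claim that $v\mapsto\lambda(2\langle u,v\rangle)$ is trivial on $N$ only for $u=0$: over a field this is immediate from nondegeneracy, but over the local ring $R$ one genuinely needs primitivity of $\lambda$ (to rule out a nonzero $u$ whose pairing lands in $\ker\lambda$) combined with $2\in R^\times$ (so that the factor of $2$ is harmless) and the module-theoretic fact that the image of an $R$-linear map is an ideal. I would isolate this as the one computational lemma and keep the Mackey/Frobenius bookkeeping terse, since it is entirely standard once that point is settled.
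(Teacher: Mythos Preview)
Your argument is correct and follows the same overall strategy as the paper: induce the trivial extension of $\lambda$ from the abelian normal subgroup associated to a Lagrangian (the paper uses $H(M)$, you use $H(N)$, a cosmetic difference), and use primitivity of $\lambda$ to pin down the stabilizer of the extended character, which yields irreducibility (the paper phrases this as Clifford's irreducibility criterion, you as the Mackey self-intertwining computation; these are equivalent here since the subgroup is normal). The one genuine variation is in the uniqueness step: rather than your Clifford-theoretic argument that the characters of $H(N)$ lying over $\lambda$ form a single $H$-orbit, the paper observes via Frobenius reciprocity and a dimension count that $\mathrm{ind}_{Z(H)}^{H}\lambda\cong\sqrt{|V|}\,X$, from which it follows immediately that $X$ is the only irreducible constituent above $\lambda$. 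Both routes are standard and both rely on the same key input (primitivity of $\lambda$, together with $2\in R^\times$); the paper's dimension argument is marginally shorter because it avoids having to verify that \emph{every} character of $N^+$ arises as $v\mapsto\lambda(2\langle u,v\rangle)$ for some $u\in M$, a fact you use implicitly in your orbit claim and which does hold precisely because $c\mapsto\lambda[c]$ is a bijection $R\to\widehat{R^+}$ when $\lambda$ is primitive.
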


\begin{proof} Extend $\lambda$ to a linear character, say $\rho$, of $H(M)$ by means of
 $$\rho(r,u)=\lambda(r),\quad r\in R,u\in M.$$
 Since $\lambda$ is primitive, the stabilizer of $\rho$ in $H$ is easily found to be $H(M^\perp)$. As $M$ is maximal totally isotropic, $M^\perp=M$, so the stabilizer of $\rho$ in $H$ is $H(M)$ itself. Let $Y=\C y$ be a 1-dimensional complex space acted upon by $H(M)$ via $\rho$. By Clifford's irreducibility criterion,
$$
X=\ind_{H(M)}^{H(V)} Y
$$
is an irreducible $H$-module. We have
$$
\dim X=|V|/|M|=\sqrt{|V|}.
$$
By construction, $Z(H)$ acts on $X$ via $\lambda$. Viewing $Y$ as a $Z(H)$-module by restriction of scalars, this means that
$$
[Y,\res^{H(V)}_{Z(H)} X]_{Z(H)}=\sqrt{|V|}.
$$
Thus, by Frobenius reciprocity,
$$
[\ind_{Z(H)}^{H(V)} Y, X]_{H(V)} = [Y,\res^{H(V)}_{Z(H)} X]_{Z(H)}=\sqrt{|V|}.
$$
On the other hand,
$$
\dim \sqrt{|V|}X=|V|=\dim \ind_{Z(H))}^{H(V)} Y.
$$
We conclude that
$$
\ind_{Z(H)}^{H(V)} Y\cong \sqrt{|V|}X.
$$
It follows by Frobenius reciprocity that $X$ is the only irreducible $H(V)$-module, up to isomorphism, lying over $\lambda$.
\end{proof}

We fix a Schr$\ddot{\rm o}$dinger representation $S:H\to\GL(X)$ of type $\lambda$, that is,
a representation satisfying the conditions stated in Proposition \ref{schro}. For $g\in\Sp$, the conjugate representation $S^g:H\to\GL(X)$, given by
$$
S^g(r,u)=S(r,gu),
$$
is also irreducible and lies over $\lambda$. By Proposition \ref{schro}, $S$ and $S^g$ are equivalent. Thus, given any $g\in\Sp$
there is $P(g)\in\GL(X)$ such that
$$
P(g)S(h)P(g)^{-1}=S({}^g h),\quad h\in H.
$$
It follows from Schur's Lemma that the map $P:\Sp\to\GL(X)$ is a projective representation, in the sense that there is a
function $\delta:\Sp\times\Sp\to\C^\times$ such that
$$
P(g_1g_2)=\delta(g_1,g_2)P(g_1)P(g_2),\quad g_1,g_2\in\Sp.
$$
\begin{definition}\label{defw} A Weil representation of $\Sp$ of type~$\lambda$ is an ordinary representation $W:\Sp\to\GL(X)$ satisfying
$$
W(g)S(h)W(g)^{-1}=S({}^g h),\quad h\in H,g\in\Sp.
$$
\end{definition}

We next show that $W$, as defined in Theorem \ref{m1}, is a Weil representation of type $\lambda$. It should
be noted that if $W'$ is also a Weil representation of type $\lambda$, then by Schur's Lemma there is a linear character $\tau:\Sp\to\C^\times$
such that
$$
W'(g)=\tau(g)W(g),\quad g\in\Sp.
$$
But, as shown below, $\Sp$ is perfect whenever $q>3$, so in this case $W$ is the only Weil representation of type $\lambda$.

\begin{prop} Suppose $q>3$. Then $\Sp$ is perfect.
\end{prop}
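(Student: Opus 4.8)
The plan is to prove that the abelianization $\Sp^{\mathrm{ab}}=\Sp/[\Sp,\Sp]$ is trivial, working through the Bruhat presentation. By the proof of Theorem~\ref{m1}, the isomorphism $\Omega\colon\Sp\xrightarrow{\ \sim\ }SL_*(2,A)$ with $A=M(n,R)$ identifies $\Sp$ with a group generated by the elements $h_T$ ($T\in\GL(n,R)$), $u_S$ ($S$ a symmetric matrix in $M(n,R)$) and $\si$, subject to the relations (\ref{r1})--(\ref{r6}); here one uses $q=|R/\m|>3$ and $2\in R^\times$ to invoke Theorem~\ref{thm:BruhatPres}. So it suffices to show that, under the quotient map $\psi\colon\Sp\to\Sp^{\mathrm{ab}}$, each of these three families of generators maps to the identity. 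I would handle the $u_S$ first, then $\si$, then the $h_T$.

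First the $u_S$. Since $q>3$ and $q$ is odd ($2\in R^\times$), the field $R/\m$ has an element distinct from $0,1,-1$, which lifts to a unit $c\in R^\times$ with $c^2-1\in R^\times$. Relation (\ref{r4}) gives $h_{cI}u_Sh_{cI}^{-1}=u_{c^2S}$ for the scalar matrix $cI$ and any symmetric $S$, so that (\ref{r2}) yields $h_{cI}u_Sh_{cI}^{-1}u_S^{-1}=u_{(c^2-1)S}$; applying this with $S$ replaced by the symmetric matrix $(c^2-1)^{-1}S$ shows $u_S\in[\Sp,\Sp]$, hence $\psi(u_S)=1$ for every symmetric $S$. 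Next, relation (\ref{r6}) with $T=1$ (the identity, which is symmetric and invertible, with $h_1=1$) reads $\si u_1\si=u_{-1}\si u_{-1}$; mapping this into the abelian group $\Sp^{\mathrm{ab}}$ and using the previous step gives $\psi(\si)^2=\psi(\si)$, so $\psi(\si)=1$.

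Finally the $h_T$. Relation (\ref{r6}) with $T=\mathrm{diag}(\de,1,\dots,1)$, $\de\in R^\times$ (again symmetric and invertible), mapped into $\Sp^{\mathrm{ab}}$ and combined with the two facts just established, yields $\psi(h_{\mathrm{diag}(\de,1,\dots,1)})=1$ for all $\de$. To reach an arbitrary $h_T$, observe that by (\ref{r1}) the map $T\mapsto\psi(h_T)$ is a homomorphism $\GL(n,R)\to\Sp^{\mathrm{ab}}$; its target being abelian it kills every commutator, and since $R$ is local with $q>3$ one has $SL(n,R)=E(n,R)\subseteq[\GL(n,R),\GL(n,R)]$, so $\psi(h_T)$ depends only on $\det T$ and therefore equals $\psi(h_{\mathrm{diag}(\det T,1,\dots,1)})=1$. (When $n=1$ this reduction is vacuous, as every $T\in R^\times$ is already a symmetric $1\times1$ matrix handled directly by (\ref{r6}).) Combining the three steps, every generator of $\Sp$ lies in $[\Sp,\Sp]$, so $\Sp$ is perfect.

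The computations in the first two parts are immediate from the defining relations. The one point that needs a little care is the reduction in the last part from the matrices $\mathrm{diag}(\de,1,\dots,1)$ to all of $\GL(n,R)$: this is where the hypothesis $q>3$ is genuinely used a second time, guaranteeing that $SL(n,R)$ is generated by elementary matrices each of which is a commutator in $\GL(n,R)$ --- for instance, for $n=2$ via $[h_{\mathrm{diag}(u,u^{-1})},h_{e_{12}(b)}]=h_{e_{12}((u^2-1)b)}$ with $u$ chosen so that $u^2-1\in R^\times$. Everything else is routine manipulation of (\ref{r1})--(\ref{r6}).
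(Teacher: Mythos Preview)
Your argument is correct, but it takes a different route from the paper's. The paper does not work through the Bruhat presentation at all: instead it invokes Theorem~\ref{symp} (a forward reference to \S\ref{antes}) to say that $\Sp$ is generated by symplectic transvections $f_{t,v}$, then exhibits each $f_{t,v}$ directly as a commutator. Concretely, for a basis vector $v$ and a unit $s$ with $s^2-1\in R^\times$ (available since $q>3$), transitivity of $\Sp$ on basis vectors gives $g\in\Sp$ with $gv=sv$, and then $[g,f_{r,v}]=f_{(s^2-1)r,v}$, so every $f_{t,v}\in\Sp'$.

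Your approach stays entirely within the Bruhat framework of \S\S\ref{dos}--4 and the proof of Theorem~\ref{m1}, which is natural in a section devoted to exploiting that presentation; it also avoids the forward reference to \S\ref{antes}. The cost is the extra input $SL(n,R)=E(n,R)\subseteq[\GL(n,R),\GL(n,R)]$ for a local ring $R$ with $q>3$, which, while standard, is not proved in the paper. The paper's argument is shorter and more geometric, and needs only the \emph{generation} of $\Sp$ by transvections rather than the full Bruhat \emph{presentation}. Both proofs ultimately hinge on the same arithmetic fact, the existence of a unit whose square differs from $1$ by a unit.
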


\begin{proof} By Theorem \ref{symp}, $\Sp$ is generated by all transvections $f_{t,v}$, where $t\in R$ and $v\in V$ is a basis vector
(see \S\ref{despues} and \S\ref{antes} for the definitions of these objects). Thus, it is enough to show that given any basis
vector $v$ and any $t\in R$, the transvection $f_{t,v}\in\Sp'$. For this purpose, note that for any unit $s\in R$
the vector $sv$ is also a basis vector. It is easy to see that any basis vector is part of a symplectic basis of $V$.
Thus, $\Sp$ acts transitively on basis vectors. Hence, there is $g\in\Sp$ such that
$gv=sv$. Therefore, for any $r\in R$, we have
$$
[g,f_{r,v}]=gf_{r,v}g^{-1}f_{-r,v}=f_{r,gv}f_{-r,v}=f_{r,sv}f_{-r,v}=f_{s^2r,v}f_{r,v}=f_{(s^2-1)r,v}.
$$
Choose the unit $s$ so that $s^2\not\equiv 1\mod\m$, which is possible since $q>3$. Then $s^2-1\in R^\times$ and we may set
$r=(s^2-1)^{-1}t$. This proves that $f_{t,v}\in\Sp'$.
\end{proof}

\begin{theorem}\label{m2} Suppose $q>3$. Then the representation $W:\Sp\to\GL(X)$ of Theorem \ref{m1} is a Weil representation of type $\lambda$
in the sense of Definition \ref{defw}.
\end{theorem}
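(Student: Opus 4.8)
The plan is to realise the Schr\"odinger representation $S$ of Proposition~\ref{schro} explicitly on the very space $X$ occurring in Theorem~\ref{m1}, and then to verify the relation of Definition~\ref{defw} on generators. Choosing the coset representatives $(0,v)$, $v\in N$, for $H(M)$ inside $X=\ind_{H(M)}^{H}Y$, one identifies $X$ with $\bigoplus_{v\in N}\C e_v$ and gets, writing a general element of $V$ as $p+q$ with $p\in M$ and $q\in N$,
$$
S(r,p+q)e_v=\la\big(r+2\langle p,v\rangle+\langle p,q\rangle\big)e_{v+q};
$$
in particular $S(r,0)e_v=\la(r)e_v$, $S(0,q)e_v=e_{v+q}$ for $q\in N$, and $S(0,p)e_v=\la(2\langle p,v\rangle)e_v$ for $p\in M$. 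Since $(R,0)$, $(0,M)$ and $(0,N)$ generate $H$, and since for each fixed $g\in\Sp$ the two maps $h\mapsto W(g)S(h)W(g)^{-1}$ and $h\mapsto S({}^gh)$ are homomorphisms $H\to\GL(X)$, it is enough to prove
$$
W(g)S(h)=S({}^gh)\,W(g)
$$
for $h$ in each of these three families and $g$ in a generating set of $\Sp$. By the proof of Theorem~\ref{m1} (which rests on Theorem~\ref{thm:decomp}), $\Sp$ is generated by $\Sp_{M,N}$, $\Sp^M$ and $s$, and the set of $g$ satisfying the displayed identity for all $h$ is visibly a subgroup of $\Sp$; so the theorem reduces to finitely many elementary checks. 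I would deliberately keep the relation in the un-conjugated form above, so that the scalar $G(\la)^{-n}$ of~(\ref{f3}) appears on both sides and cancels: no Gauss-sum identity is required in this section.

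Most of the checks are immediate. When $h=(r,0)$ both sides equal $\la(r)W(g)$. When $g\in\Sp_{M,N}$, $g$ preserves $M$, $N$ and $\langle~,~\rangle$, so ${}^gh$ stays in the same family and the identity follows from~(\ref{f1}). When $g\in\Sp^M$ and $h=(0,p)$ with $p\in M$, one has ${}^gh=h$ while $W(g)$ and $S(0,p)$ are both diagonal in the basis $(e_v)_{v\in N}$, hence commute.

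The substance is in two families. For $g\in\Sp^M$ and $h=(0,q)$ with $q\in N$: as $\Sp^M$ fixes $M=M^{\perp}$ pointwise it acts trivially on $V/M$, so $gv=v+p_v$ and $gq=q+p_q$ with $p_v,p_q\in M$; inserting~(\ref{f2}) and the formula for $S(0,q+p_q)$ and comparing coefficients, the claim reduces to $\langle p_v,q\rangle=\langle p_q,v\rangle$, which is precisely the content of $0=\langle gv,gq\rangle-\langle v,q\rangle=\langle v,p_q\rangle+\langle p_v,q\rangle$. For $g=s$: now ${}^s(0,q)=(0,sq)$ with $sq\in M$ when $q\in N$, and ${}^s(0,p)=(0,sp)$ with $sp\in N$ when $p\in M$. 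Substituting~(\ref{f3}) on both sides and, in the second case, reindexing the translation by $sp$ on the right-hand side, the assertion collapses to the bilinear identities
$$
\langle sq,w\rangle=-\sum_i\langle u_i,w\rangle\langle u_i,q\rangle,\qquad \langle p,v\rangle=\sum_i\langle u_i,sp\rangle\langle u_i,v\rangle,
$$
which follow at once from $su_i=v_i$, $sv_i=-u_i$ and $\langle u_i,v_j\rangle=\delta_{i,j}$ upon writing $q=\sum_jc_jv_j$ and $p=\sum_jd_ju_j$. Combining these with the subgroup remark above yields $W(g)S(h)W(g)^{-1}=S({}^gh)$ for all $g\in\Sp$ and $h\in H$, which is exactly Definition~\ref{defw}.

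The one real obstacle is bookkeeping: getting the normalisation of the induced model right so that the explicit formulas for $S$ above are correct — especially the factors of $2$, which trace back to $2\langle u_i,v_i\rangle$ being the pertinent commutator value in $H$ — and placing the translation reindexing in the correct spot in the $s$-computations. Once the concrete model of $S$ is fixed, everything else is routine.
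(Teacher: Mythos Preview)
Your proposal is correct and follows essentially the same route as the paper: realize $S$ explicitly on $X$ via the induced model (the paper records the special cases (\ref{a1})--(\ref{a3}), you give the general formula), reduce to $h\in (0,M)\cup(0,N)$ and $g\in\Sp_{M,N}\cup\Sp^M\cup\{s\}$, and check case by case; your key identity $\langle p_v,q\rangle=\langle p_q,v\rangle$ is exactly the paper's $\langle gw,v\rangle=\langle gv,w\rangle$, and your coordinate-free treatment of $g=s$ is simply the paper's matrix computation rewritten.
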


\begin{proof} In order to verify that $W$, as defined in Theorem \ref{m1}, satisfies Definition \ref{defw}, we first determine the action of $H(V)$ on $X$.
Note that the elements $(0,v)$, $v\in N$, of $H(V)$ form a transversal for $H(M)$ in $H(V)$,
so that
$$
X=\underset{v\in N}\bigoplus (0,v)\C y.
$$
It follows that the elements $e_v=(0,v)y$, $v\in N$, form a complex basis of $X$. Moreover,
we have the following formulae for the action of $H(V)$ on basis vectors $e_v$, $v\in N$:
\begin{equation}
\label{a1}
S(0,w)e_v=e_{v+w},\quad w\in N,
\end{equation}
\begin{equation}
\label{a2}
S(0,u)e_v=\lambda(2 \langle u,v\rangle )e_{v},\quad u\in M,
\end{equation}
\begin{equation}
\label{a3}
S(r,0)e_v=\lambda(r)e_{v},\quad r\in R.
\end{equation}
As $(0,M)\cup(0,N)$ generate $H(V)$, given $g\in\Sp$, it suffices to verify that $W(g)$ satisfies Definition \ref{defw} for $h$ in $(0,M)$
or $(0,N)$. Since $W$, as given in Theorem~\ref{m1}, was proven to a be a representation, we may restrict this verification to $g$ in the generating set of $\Sp$ used in the proof of Theorem \ref{m1}, namely $\Sp_{M,N}\cup\Sp^M\cup\{s\}$.

Suppose first $g\in\Sp_{M,N}$. Then for $u\in M$ and $v\in N$, we have
$$
\begin{aligned}
W(g)S(0,u)W(g)^{-1}e_v &=W(g)S(0,u)\mu(\det g^{-1}|_N)e_{g^{-1}v}\\
&=W(g)\mu(\det g^{-1}|_N)\lambda(2\langle u,g^{-1}v\rangle )e_{g^{-1}v}\\
&=\mu(\det g|_N)\mu(\det g^{-1}|_N)\lambda(2\langle u,g^{-1}v\rangle )e_{gg^{-1}v}\\
&=\lambda(2\langle gu,v\rangle )e_{v}=S^g(0,u)e_v.
\end{aligned}
$$
Moreover, for $w\in N$ and $v\in N$, we have
$$
\begin{aligned}
W(g)S(0,w)W(g)^{-1}e_v &=W(g)S(0,w)\mu(\det g^{-1}|_N)e_{g^{-1}v}\\
&=W(g)\mu(\det g^{-1}|_N)e_{g^{-1}v+w}\\
&=\mu(\det g|_N)\mu(\det g^{-1}|_N)e_{v+gw}\\
&=e_{v+gw}=S^g(0,w)e_v.
\end{aligned}
$$

Suppose next $g\in\Sp^M$. Then for $u\in M$ and $v\in N$, we have
$$
\begin{aligned}
W(g)S(0,u)W(g)^{-1}e_v &=W(g)S(0,u)\la(\langle g^{-1}v,v\rangle )e_{v}\\
&=W(g)\lambda(2\langle u,v\rangle)\la(\langle g^{-1}v,v\rangle ) e_{v}\\
&=\lambda(2\langle u,v\rangle)\la(\langle g^{-1}v,v\rangle ) e_{v}\\
&=\lambda(2\langle u,v\rangle )\la(\langle g^{-1}v,v\rangle )\la(\langle gv,v\rangle )e_{v}\\
&=\lambda(2\langle u,v\rangle )e_{v}=S(0,u)e_v=S^g(0,u)e_v.
\end{aligned}
$$
We used above the fact that $g\mapsto \la(\langle gv,v\rangle )$ is a group homomorphism $\Sp^M\to\C^\times$,
which appeared in \S\ref{ilu} in matrix form as the group homomorphism $u_S\mapsto \la(a'Sa)$.
Moreover, for $w\in N$ and $v\in N$, we have
$$
\begin{aligned}
W(g)S(0,w)W(g)^{-1}e_v &=W(g)S(0,w)\la(\langle g^{-1}v,v\rangle )e_{v}\\
&=W(g)\la(\langle g^{-1}v,v\rangle ) e_{v+w}\\
&=\lambda(\langle g(v+w),v+w\rangle)\la(\langle g^{-1}v,v\rangle ) e_{v+w}\\
&=\lambda(\langle gv,w\rangle)\lambda(\langle gw,v\rangle)\lambda(\langle gw,w\rangle) e_{v+w}.
\end{aligned}
$$
On the other hand, since $gw-w\in M$ and
$$
(0,gw)=(0,gw-w+w)=(0,gw-w)(0,w)(\langle w,gw\rangle,0),
$$
we have
$$
\begin{aligned}
S^g(0,w)e_v &= S(0,gw)e_v\\
&=S(0,gw-w)S(0,w)S(\langle w,gw\rangle,0)e_v\\
&=S(0,gw-w)S(0,w)\la(\langle w,gw\rangle)e_v\\
&=S(0,gw-w)\la(\langle w,gw\rangle)e_{v+w}\\
&=\lambda(2\langle gw-w,v+w\rangle)\la(\langle w,gw\rangle)e_{v+w}\\
&=\lambda(\langle gw,w\rangle)\lambda(2\langle gw,v\rangle)e_{v+w}\\
&=\lambda(\langle gw,w\rangle)\lambda(\langle gw,v\rangle)\lambda(\langle gw,v\rangle)e_{v+w}.
\end{aligned}
$$
This is equal to the above expression, since
$$
\langle gw,v\rangle=\langle gv,w\rangle,
$$
which is most easily seen in matrix form as
$$
a'Sb=b'Sa
$$
for any symmetric matrix $S\in M(n,R)$. We finally show that
$$
W(s)S(h)W(s)^{-1}=S({}^s h),\quad h\in H,
$$
or equivalently,
$$
W(s)S(h)=S({}^s h)W(s),\quad h\in H.
$$
For this purpose, let
$$
v=a_1v_1+\cdots+a_n v_n,\, w=b_1v_1+\cdots+b_n v_n,\,z=c_1v_1+\cdots+c_n v_n,\, u=d_1u_1+\cdots+d_n u_n.
$$
By definition of $s$, we have
$$
s(w)=-(b_1u_1+\cdots+b_n u_n),\, s(u)=d_1v_1+\cdots+d_n v_n.
$$
Therefore, for any $a\in R^n$, we have
$$
\begin{aligned}
S(0,s w)W(s)e_{a} &= S(0,s w)G(\la)^{-n}\underset{c}\sum \la(-2c'a)e_{c}\\
&=G(\la)^{-n}\underset{c}\sum \la(-2c'a)\la(-2c'b)e_{c}
\end{aligned}
$$
and
$$
\begin{aligned}
W(s)S(0,w)e_{a} &= W(s)e_{a+b}\\
&=G(\la)^{-n}\underset{c}\sum \la(-2c'(a+b))e_{c},
\end{aligned}
$$
as required. Moreover,
$$
\begin{aligned}
S(0,s u)W(s)e_{a} &= S(0,s u)G(\la)^{-n}\underset{c}\sum \la(-2a'c))e_{c}\\
&=G(\la)^{-n}\underset{c}\sum \la(-2a'c))e_{c+d}\\
&=G(\la)^{-n}\underset{c}\sum \la(-2a'(c-d)))e_{c}
\end{aligned}
$$
and
$$
\begin{aligned}
W(s)S(0,u)e_{a} &= W(s)\la(2a'd)e_a\\
&=G(\la)^{-n}\underset{c}\sum \la(2a'd) \la(-2a'c) e_{c}\\
&=G(\la)^{-n}\underset{c}\sum \la(-2a'(c-d)))e_{c},
\end{aligned}
$$
as required. This completes the proof that $W$ is a Weil representation of type $\la$.
\end{proof}

\section{Transvections}\label{despues}

In this section we assume that \( A \) is an arbitrary commutative
ring. Let $V$ be a free $A$-module of finite rank $n$. Given an $A$-linear map \( \theta:V \rightarrow A \) and
some \( v \in \ker(\theta) \) we define the linear transformation \( t_{\theta,v}:V \rightarrow V \)
by
\[ t_{\theta,v}(u) = u +\theta(u)v. \]
We call \( t_{\theta,v} \) a {\em transvection} of \( V \).
We refer to a vector \( u \) of \( V \) as a basis vector if it
belongs to some basis of \( V \).

\begin{lemma}
    \label{lem:transvdetone}
    \( \det(t_{\theta,v}) =1 \).
\end{lemma}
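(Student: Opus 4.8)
The plan is to compute the determinant straight from the definition by choosing a basis and recognizing $t_{\theta,v}$ as an ``identity plus rank one'' transformation. Fix a basis $e_1,\dots,e_n$ of $V$, put $w_i=\theta(e_i)\in A$, and write $v=\sum_j v_j e_j$. Since $t_{\theta,v}(e_j)=e_j+w_j v$, the matrix of $t_{\theta,v}$ in this basis is $M=I_n+vw^{\mathrm t}$, where $v$ is regarded as the column vector $(v_1,\dots,v_n)^{\mathrm t}$ and $w^{\mathrm t}=(w_1,\dots,w_n)$. The hypothesis $v\in\ker(\theta)$ is exactly the statement $w^{\mathrm t}v=\sum_i w_i v_i=\theta\big(\sum_i v_i e_i\big)=\theta(v)=0$.

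It then remains to apply the rank-one update formula $\det(I_n+vw^{\mathrm t})=1+w^{\mathrm t}v$, which gives $\det(t_{\theta,v})=1+\theta(v)=1$. Since $A$ is only assumed commutative, I would not cite this over a field but instead prove it in one line via the exterior algebra: evaluating $\Lambda^n(t_{\theta,v})$ on $e_1\wedge\cdots\wedge e_n$ yields
$$
(e_1+w_1 v)\wedge\cdots\wedge(e_n+w_n v),
$$
and upon expansion every term containing two or more factors $v$ vanishes because $v\wedge v=0$; the surviving terms are $e_1\wedge\cdots\wedge e_n$ together with $\sum_i w_i\,(e_1\wedge\cdots\wedge v\wedge\cdots\wedge e_n)$ with $v$ inserted in the $i$-th slot. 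Substituting $v=\sum_j v_j e_j$ into the $i$-th summand annihilates every $j\ne i$ (repeated $e_i$), leaving $w_i v_i\,e_1\wedge\cdots\wedge e_n$. Hence $\Lambda^n(t_{\theta,v})$ is multiplication by $1+\sum_i w_i v_i=1+\theta(v)=1$, i.e. $\det(t_{\theta,v})=1$.

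I expect no real obstacle here; the only point requiring a little care is to avoid any argument that presupposes $v$ extends to a basis of $V$, which can fail over a general commutative ring. The exterior-power computation above sidesteps this, using only that $V$ is free of finite rank, and it is short and self-contained; a basis-free variant (exploiting that $\theta$ kills the line $Av$) would also work but gains nothing.
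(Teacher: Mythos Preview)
Your argument is correct. It coincides with the paper's \emph{alternative} proof: write the matrix as $I_n+vw^{\mathrm t}$ and use $\det(I_n+vw^{\mathrm t})=1+w^{\mathrm t}v$. The paper states this identity and leaves it ``as a nice exercise in induction''; your exterior-algebra expansion of $(e_1+w_1v)\wedge\cdots\wedge(e_n+w_nv)$ supplies a clean one-paragraph proof of that exercise and is arguably tidier than an inductive cofactor expansion.

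The paper's \emph{primary} proof is genuinely different and worth knowing. Rather than compute a determinant directly, it sidesteps the issue you flagged (that $v$ need not extend to a basis over a general commutative ring) by embedding $V$ into a free module $\tilde V$ of rank $n+1$ with an extra basis vector $v_0$, and replacing $v$ by $\tilde v=v_0+v$, which \emph{is} a basis vector of $\tilde V$. The transvection $t_{\tilde\theta,\tilde v}$ then has block upper-triangular matrix $\begin{pmatrix}1&y\\0&X\end{pmatrix}$ with $X$ the matrix of $t_{\theta,v}$, so both transvections share the same determinant, and the ``$v$ is a basis vector'' case is trivially upper unitriangular. Your approach buys a self-contained computation with no auxiliary construction; the paper's buys a reduction that avoids any determinant identity at all.
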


\begin{proof} Let $\B=\{v_1,\dots,v_n\}$ be a basis of $V$. We embed $V$ into a free $A$-module $\tilde{V}$ of rank $n+1$
with basis ${\mathcal C}=\{v_0,v_1,\dots,v_n\}$ and define the linear functional
\( \tilde{\theta}: \tilde{V} \rightarrow A \)
    by \( \tilde{\theta}(av_0+u) =\theta(u)\) for \( a \in A \) and
    \( u \in V \). Let \( \tilde{v} = v_0+v\) and observe that
    \( \tilde{v} \) is a basis vector of $\tilde{V}$ and that the matrix of \(
    t_{\tilde\theta,\tilde{v}}\) with respect to ${\mathcal C}$
    is \( \begin{pmatrix} 1 & y \\ 0 & X \end{pmatrix} \)
    where \( X \) is the matrix of \( t_{\theta,v} \) with respect to $\B$.
    Since \( \det(t_{\tilde\theta,\tilde{v}}) = \det(t_{\theta,v}) \),
    the result follows from the case where \( v \) is assumed to be
    a basis vector of $V$.
    In that case we may assume, without loss of generality, that
    \( v=v_1\). Now the matrix
    of \( t_{\theta,v}  \) with respect to $\B$ is
    \[ \begin{pmatrix}
        1 & c_2 & c_3 & \cdots & c_l \\
        0 & 1 & 0 & \cdots & 0 \\
        0 & 0 & 1 & \cdots & 0 \\
        \vdots &&&& \vdots \\
        0 &0 &0 & \cdots & 1
    \end{pmatrix}
    \]
    where \( c_i = \theta(v_i)\) for \( i = 2,\cdots,n \), so \( \det(t_{\theta,v}) =1 \) as claimed.

\end{proof}

\begin{proof}[Alternative proof]
Let \(v_1,\dots,v_n\) be a basis of \(V\) and write
\(v=a_1v_1+\dots+a_nv_n\).
Also let \(\theta(v_1)=b_1,\dots,\theta(v_n)=b_n\). The matrix of
\(t_{\theta,u}\)  with respect to this basis is
\[
    \begin{pmatrix}
        1+a_1b_1 & a_1b_2 & \dots & a_1b_n \\
        a_2b_1 & 1+ a_2b_2 & \dots & a_2b_n \\
        \vdots & & & \vdots \\
        a_nb_1 & a_nb_2 & \dots & 1 +a_nb_n
    \end{pmatrix}
\]
The determinant of this matrix is \(1+a_1b_1+\dots+a_nb_n\)
(which is a nice exercise in induction for the reader!).
But \(0=\theta(v)=a_1b_1+\dots+a_nb_n\).
\end{proof}

\section{Generation of the special unitary group by transvections}\label{antes}

Throughout this section \( S \) stands for a commutative local ring with involution \( * \)
and \( R \) for the subring of elements fixed by \( * \).
Let \( V \) be a free \( S \) module of rank \( 2m \) and
let \( h:V \times V \rightarrow S \) be a nondegenerate skew hermitian
form on V. We say that elements \( u_1,v_,\dots,u_l,v_l \in V\) form
a symplectic set if
\( h(u_i,v_i) = 1\) for all \(i\), \( h(u_i,u_j) =
h(v_i,v_j) = 0
\) for all \( i,j \), and \( h(u_i,v_j) = 0 \) for all \( i \neq j \).
We will assume that \( V \) has a symplectic basis
\( u_1,v_1,\dots,u_m,v_m \). Note that in general, given \( V \) and
\( h \) there may not exist such a symplectic basis. Indeed, in
\cite{CQS} we have explored sufficient conditions for the existence
of such a basis. Here we will just assume that such a basis exists
for \( V \). Note that we do not assume the existence of such a basis
for arbitrary free submodules of \( V \).

We write \( U(V) \) for the group
of \( S \)-linear automorphisms that preserve \( h \), and \( SU(V) \)
for the subgroup consisting of elements of determinant 1.
In the sequel we will, where convenient and without further comment,
identify endomorphisms
of \( V \) with their matrix representations with respect to
the basis \( u_1,v_1,\dots,u_m,v_m \).


Given \( v \in V \) we call \( h(v,v) \) the {\em length} of
\( v \).
For \( a \in S \) and \( v \in V \), define
\[ f_{a,v}(u) = u + ah(v,u)v. \]
Observe that if \( v \) has length zero, then \( f_{a,v} \)
is a transvection.

\begin{lemma}
    \label{lem:unitarytransevections}
    If \( a \in R \) and \( h(v,v) = 0 \)
    then \( f_{a,v}
    \in SU(V)\).
\end{lemma}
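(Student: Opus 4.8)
The plan is to check separately that $f_{a,v}$ preserves $h$ and that it has determinant one; together these place $f_{a,v}$ in $SU(V)$.

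For the determinant, I would note that since $h(v,v)=0$ the map $\theta\colon V\to S$ given by $\theta(u)=ah(v,u)$ is $S$-linear (linearity of $h$ in its second argument) and satisfies $\theta(v)=ah(v,v)=0$, so $f_{a,v}=t_{\theta,v}$ is a transvection in the sense of \S\ref{despues}. Lemma \ref{lem:transvdetone} then gives $\det f_{a,v}=1$, and in particular $f_{a,v}$ is invertible. (One can also see invertibility directly: a one-line computation using $h(v,v)=0$ shows $f_{a,v}f_{b,v}=f_{a+b,v}$, so $f_{-a,v}$ is a two-sided inverse.)

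For membership in $U(V)$, I would verify $h(f_{a,v}x,f_{a,v}y)=h(x,y)$ for all $x,y\in V$ by direct expansion. Setting $\alpha=ah(v,x)$ and $\beta=ah(v,y)$ in $S$, and using that $h$ is linear in the second variable and conjugate-linear in the first — that is, $h(us,w)=s^*h(u,w)$, which follows from the skew hermitian identity — one obtains
\[
h(x+\alpha v,\;y+\beta v)=h(x,y)+\beta\, h(x,v)+\alpha^{*}h(v,y)+\alpha^{*}\beta\, h(v,v).
\]
The last term vanishes because $h(v,v)=0$. For the two middle terms, the skew hermitian identity gives $h(x,v)=-h(v,x)^{*}$, while $a\in R$ gives $a^{*}=a$ and hence $\alpha^{*}=a\,h(v,x)^{*}$; since $S$ is commutative, the two middle terms are then negatives of each other and cancel. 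This leaves $h(f_{a,v}x,f_{a,v}y)=h(x,y)$, so $f_{a,v}\in U(V)$; combined with $\det f_{a,v}=1$ we conclude $f_{a,v}\in SU(V)$.

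The argument is entirely routine, so there is no real obstacle; the only point requiring care is keeping the conjugations straight, namely that a scalar extracted from the first argument of $h$ picks up a $*$, and it is precisely the hypotheses $a=a^{*}$ and $h(v,v)=0$ that make the expansion collapse to $h(x,y)$.
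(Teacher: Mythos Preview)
Your proof is correct and follows essentially the same approach as the paper: expand $h(f_{a,v}(w_1),f_{a,v}(w_2))$, use $h(v,v)=0$ and $a^*=a$ to see the cross terms cancel, and invoke Lemma~\ref{lem:transvdetone} (after observing $f_{a,v}=t_{\theta,v}$ for $\theta=ah(v,\cdot)$) for the determinant. The paper's proof simply omits the explicit expansion you wrote out.
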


\begin{proof}
    Observe that
    \begin{eqnarray*}
        h(f_{a,v}(w_1),f_{a,v}(w_2)) &=& h(w_1+a h(v,w_1)v,w_2+a h(v,w_2)v)
    \end{eqnarray*}
    Now, expanding the right hand side and using \( h(v,v) =0 \) and
    \( a^*=a \), we see that \( f_{a,v} \in U(V) \). As observed above,
    \( f_{a,v} \) is a transvection and hence, by Lemma \ref{lem:transvdetone},
    it has determinant 1.
\end{proof}

The goal of this section is to prove the following theorem.

\begin{theorem}
    \label{thm:SUgenbytransvections}
    The set \[ \{f_{a,v}: a \in R, h(v,v) = 0, v \text{ a basis vector}\} \] is a generating
    set for the group \( SU(V) \).
\end{theorem}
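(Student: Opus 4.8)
The plan is to reduce the generation statement to a transitivity statement about the action of the subgroup $T$ generated by the transvections $f_{a,v}$ (with $a \in R$, $v$ isotropic, $v$ a basis vector) on a suitable set of vectors, and then to induct on $m$. More precisely, I would first show that $T$ acts transitively on the set of isotropic basis vectors of $V$; this is the geometric heart of the argument. Granting transitivity, I would pick $g \in SU(V)$, set $u_1' = g u_1$ (which is isotropic and a basis vector since $g$ is unitary and invertible), and use transitivity to find $\tau \in T$ with $\tau u_1' = u_1$, so that $\tau g$ fixes $u_1$. One then wants to further adjust $\tau g$ by transvections to fix $v_1$ as well; since $h(u_1, \tau g v_1) = h(u_1, v_1) = 1$, the vector $v_1' = \tau g v_1$ already pairs correctly with $u_1$, and a transvection $f_{a,u_1}$ (which fixes $u_1$ and alters $v_1'$ within the coset $v_1' + S u_1$) can be used to kill the $u_1$-component; but one must be careful that a single such $f$ lands $v_1'$ exactly on $v_1$, which in general requires combining $f_{a,u_1}$ with an Eichler-type product of transvections. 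Having arranged that $\tau' \tau g$ fixes both $u_1$ and $v_1$, it preserves the hyperbolic plane $P = \langle u_1, v_1\rangle$ and hence its orthogonal complement $P^\perp$, which is a free $S$-module of rank $2(m-1)$ carrying a nondegenerate skew hermitian form with symplectic basis $u_2, v_2, \dots, u_m, v_m$. By induction $(\tau'\tau g)|_{P^\perp} \in SU(P^\perp)$ is a product of transvections supported on isotropic basis vectors of $P^\perp$, and such vectors are isotropic basis vectors of $V$ as well (here the hypothesis that $u_1, v_1$ are part of a global symplectic basis is what lets the complement inherit one), so $g$ itself is a product of transvections of the required form.

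For the transitivity step I would proceed in stages. Given two isotropic basis vectors $x, y$, the first reduction is the case where $h(x,y)$ is a unit: then $\langle x, y\rangle$ is a hyperbolic plane and one computes directly that a product of at most two transvections $f_{a,x}$, $f_{b,y}$ carries $x$ to $y$ (this is the standard Eichler transformation / Siegel transformation calculation; the key point is that with $h(x,x)=h(y,y)=0$ and $h(x,y) \in S^\times$ the relevant coefficients can be solved for, and they can be chosen in $R$ because $2 \in S^\times$ lets one symmetrize, or more precisely because the trace condition is automatically met for the relevant scalars). The second, harder reduction handles the case $h(x,y) \in \mathfrak{m}$: here one must produce an auxiliary isotropic basis vector $z$ with $h(x,z)$ and $h(y,z)$ both units, and then chain $x \leadsto z \leadsto y$. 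Existence of such a $z$ is where the rank hypothesis $2m \geq 2$ and the structure of $V$ enter: writing $x$ in the symplectic basis and using that $x$ is a basis vector (so some coordinate is a unit), one builds $z$ as an explicit isotropic combination of basis vectors, checking the two unit conditions by a reduction mod $\mathfrak{m}$ together with Lemma~\ref{lem:invertible}-style lifting.

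The step I expect to be the main obstacle is exactly this production of the intermediary vector $z$ in the non-unit case, i.e. showing that $T$ acts transitively rather than just with controllable orbits — over a field this is routine Witt-type extension, but over a local ring one cannot diagonalize arbitrary submodules (the paper explicitly warns that submodules of $V$ need not have symplectic bases), so the argument must stay inside $V$ and manipulate the fixed global symplectic basis, using the residue field $\overline S$ to guide the choice and then lifting units via $x \in S^\times \Leftrightarrow \overline x \in \overline S^\times$. A secondary technical point to watch is ensuring all transvection parameters lie in the fixed ring $R$ (needed for $f_{a,v} \in U(V)$, by Lemma~\ref{lem:unitarytransevections}); since the scalars that arise are of the form $h(v,u)$ with the relevant vectors isotropic and suitably paired, their symmetry under $*$ should come out automatically, but this has to be verified at each use.
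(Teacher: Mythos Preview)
Your outline is essentially the paper's own strategy: induct on $m$, reduce the inductive step to transitivity of $T_{2m}$ on symplectic pairs (equivalently, first on isotropic basis vectors, then on partners), and pass to $P^\perp$. Two points deserve flagging.

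First, the base case $m=1$ is missing and genuinely different. Your transitivity argument needs an auxiliary isotropic vector $z$ with $h(x,z)$ and $h(y,z)$ both units, and in rank $2$ no such $z$ is available in general; likewise your ``fix $v_1$ after $u_1$'' step, as written, already presupposes $m\geq 2$. The paper handles $m=1$ by a separate direct computation: it shows $SU(W)=SL(2,R)$ (Lemma~\ref{lem:su2sl2}) and then that $T_2=SL(2,R)$ via the upper/lower unitriangular generators and the fact that $u\sim au$ for $a\in R^\times$ (Lemmas~\ref{lem:UrelatedtoaU}, \ref{lem:diagonals}, \ref{lem:t2sl2}). You should insert this argument as your induction anchor.

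Second, the dichotomy you set up is ``$h(x,y)$ a unit'' versus ``$h(x,y)\in\mathfrak m$'', but the transvection trick actually needs $h(x,y)\in R^\times$, not merely $S^\times$: the vector $y-x$ is isotropic only when $h(x,y)\in R$, and the parameter of the transvection $f_{-h(x,y)^{-1},\,y-x}$ lies in $R$ only then (cf.\ Lemma~\ref{lem:prodinRimpliesrelated}). Your proposed product $f_{a,x}f_{b,y}$ runs into the same constraint on $a,b$. The paper sidesteps this by never comparing two arbitrary isotropic vectors directly: instead it uses the global symplectic basis to scale so that $h(u,a_1^{-1}v_1)=1\in R^\times$ and then chains through $u_2$ (Lemma~\ref{lem:sympbasis}, proof of Lemma preceding Corollary~\ref{cor:transitivityonbasisvectorsoflengthzero}). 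Your plan to ``symmetrize using $2\in S^\times$'' does not resolve this, and in fact the paper's Section~\ref{antes} does not assume $2\in S^\times$ for this theorem. Similarly, the ``fix $v_1$'' step is more delicate than applying $f_{a,u_1}$: the paper uses an auxiliary $z_\delta=u_1+v_1+\delta v_2$ with $\delta$ chosen so that $h(z_\delta,v)\in R^\times$, together with a basis change when all off-plane coefficients of $v$ lie in $\mathfrak m$ (Lemma~\ref{lem:symppairprodunitinR} and the lemma following it).
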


\begin{lemma}\label{p}
    The following are equivalent.
    \begin{enumerate}
        \item \( u \) is a basis vector of \( V \).
        \item There is some \( w \) belonging to the
            basis \( u_1,v_1,\dots,u_m,v_m \) such that
            \( h(u,w) \in S^\times \).
        \item There is some \( v \in V \) such that \( h(u,v)
            \in S^\times\).
        \item There is some \( z \in V \) such that \( h(u,z) =1 \).
    \end{enumerate}
    \label{lem:basisone}
\end{lemma}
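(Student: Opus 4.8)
The plan is to prove the cycle of implications $(1)\Rightarrow(3)\Rightarrow(2)\Rightarrow(4)\Rightarrow(1)$; together with the trivial implications $(4)\Rightarrow(3)$ and $(2)\Rightarrow(3)$ this establishes the lemma. Write $\m$ for the maximal ideal of $S$, $\overline{S}=S/\m$ for the residue field, and use bar notation for the images of vectors in $\overline{V}=V/\m V$ and for the induced skew hermitian form $\overline{h}$ on $\overline{V}$. Throughout I use the standard fact that over the local ring $S$ a vector $u\in V$ is a basis vector if and only if it is unimodular if and only if $u\notin\m V$; the passage from unimodular to basis vector uses that a unimodular vector splits off a finitely generated projective, hence free, complement.

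For $(1)\Rightarrow(3)$: a basis vector $u$ has $\overline{u}\neq 0$. The Gram matrix of $h$ in the symplectic basis $u_1,v_1,\dots,u_m,v_m$ is block diagonal with blocks $\left(\begin{smallmatrix}0&1\\-1&0\end{smallmatrix}\right)$, of determinant $1\in S^\times$, so its reduction mod $\m$ is invertible and $\overline{h}$ is nondegenerate over the field $\overline{S}$. Hence there is $\overline{v}\in\overline{V}$ with $\overline{h}(\overline{u},\overline{v})\neq 0$, and any lift $v\in V$ satisfies $h(u,v)\notin\m$, i.e.\ $h(u,v)\in S^\times$.

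For $(3)\Rightarrow(2)$: expanding $v=\sum_{i=1}^m(u_ia_i+v_ib_i)$ and using linearity gives $h(u,v)=\sum_{i=1}^m\bigl(h(u,u_i)a_i+h(u,v_i)b_i\bigr)$; since the non-units of the commutative local ring $S$ form an ideal and $h(u,v)$ is a unit, some summand $h(u,u_i)a_i$ or $h(u,v_i)b_i$ is a unit, and therefore so is the corresponding $h(u,u_i)$ or $h(u,v_i)$. For $(2)\Rightarrow(4)$: if $h(u,w)=s\in S^\times$ for some symplectic basis vector $w$, then $z=ws^{-1}$ satisfies $h(u,z)=h(u,w)s^{-1}=1$ by linearity in the second variable.

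For $(4)\Rightarrow(1)$: starting from $h(a,b)^*=-h(b,a)$ one computes $h(wt,z)=t^*h(w,z)$, so $w\mapsto h(w,z)$ is semilinear rather than linear; composing with $*$ fixes this, giving an $S$-linear functional $\varphi\colon V\to S$, $\varphi(w)=h(w,z)^*$, with $\varphi(u)=h(u,z)^*=1$. Thus $u$ is unimodular, hence a basis vector. The only steps that require any attention are this last semilinearity bookkeeping and, in $(1)\Rightarrow(3)$, confirming that $\overline{h}$ remains nondegenerate modulo $\m$ — which is exactly why one invokes the existence of a symplectic basis (forcing the Gram matrix to have unit determinant) rather than the bare nondegeneracy hypothesis. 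I do not anticipate a genuine obstacle; the argument is essentially formal.
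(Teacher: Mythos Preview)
Your proof is correct. The paper proves the cycle $(1)\Rightarrow(2)\Rightarrow(3)\Rightarrow(4)\Rightarrow(1)$, while you prove $(1)\Rightarrow(3)\Rightarrow(2)\Rightarrow(4)\Rightarrow(1)$; the steps $(2)\Leftrightarrow(3)$ and $(3)\Rightarrow(4)$ are essentially identical in both. The real difference lies at the two ends. For $(1)\Rightarrow(2)$ the paper stays entirely in coordinates: writing $u=\sum_i(a_iu_i+b_iv_i)$, the $a_i,b_i$ are (up to sign) the values $h(v_i,u)$ and $h(u_i,u)$, and since $u$ is a basis vector this row extends to an invertible matrix, forcing some entry to be a unit because $S$ is local. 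For $(4)\Rightarrow(1)$ the paper argues contrapositively in coordinates: if all $a_i,b_i$ lie in $\m$, then $h(u,z)\in\m$ for every $z$. Your route instead passes through reduction modulo $\m$ and nondegeneracy of $\overline{h}$ for $(1)\Rightarrow(3)$, and through the abstract unimodular/projective-hence-free argument for $(4)\Rightarrow(1)$. Both are perfectly valid; the paper's version is a bit more self-contained (no appeal to projectives over local rings or to Nakayama-type facts), while yours isolates more clearly the conceptual reasons---nondegeneracy survives reduction, and unimodular vectors over local rings are basis vectors---that make the equivalence work.
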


\begin{proof} (1) \( \Rightarrow  \) (2) We have
    \( u = \sum_{i=1}^m (a_iu_i+b_iv_i) \) where \( b_i = h(u_i,u) \)
    and \( a_i = -h(v_i,u) \). Since $u$ is a basis vector, $(a_1,b_1,\dots,a_m,b_m)$
    is a row of some invertible matrix over $S$. As \( S \) is local this implies that
    at least one entry in the row is a unit.

    (2) \( \Rightarrow  \) (3) is obvious, while (3) \( \Rightarrow  \) (4) follows from
    \( 1 = h(u,h(u,v)^{-1}v) \) if \( h(u,v) \in S^\times \).

    (4) \( \Rightarrow  \) (1) We have
    \( u = \sum_{i=1}^m (a_iu_i+b_iv_i) \) for some $a_i,b_i\in S$. Since $S$ is local, if all $a_i,b_i$ are non units
    then so is $h(u,z)$ for every $z\in V$. As this is not the case, some $a_i$ or $b_i$ is a unit, whence $u$ is a basis vector.
\end{proof}

Let \( T_{2m} \) be the subgroup of \( SU(V) \) generated
by \( \{f_{a,v}: a \in R, h(v,v) = 0, v \text{ a basis vector}\} \).
Let \( \sim \) be the equivalence relation on \( V \) induced
by the action of \( T_{2m} \), so \( u \sim v \) if and only
if there is some \( X \in T_{2m} \) such that \( Xu=v \).
Similarly for symplectic pairs
\( (u,v) \) and \( (w,z) \) we write \( (u,v)\sim (w,z) \) if there
is some \( X \in T_{2m} \) such that \( Xu = w \) and \( Xv = z \).
The key step in the proof of Theorem \ref{thm:SUgenbytransvections}
is to show that for \( m \geq 2 \), \( T_{2m} \) acts
transitively on the set of symplectic pairs in \( V \).

\begin{lemma}
    \label{lem:prodinRimpliesrelated}
    Let \( u,v \) be basis vectors of \( V \), both of length \( 0 \),
    and suppose that \( h(u,v) \in R^\times \).
    Then \( u \sim v \).
\end{lemma}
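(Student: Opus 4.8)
The plan is to connect $u$ to $v$ by a product of just two of the generating transvections of $T_{2m}$, both having axis inside the hyperbolic plane $Su+Sv$. Set $c=h(u,v)$. Since $c\in R^\times$ by hypothesis, we have $c^*=c$ and $c^{-1}\in R$, and from $h(u,v)^*=-h(v,u)$ we get $h(v,u)=-c$. As $u$ and $v$ are basis vectors of length $0$ and $-c^{-1}\in R$, Lemma~\ref{lem:unitarytransevections} guarantees that $f_{-c^{-1},u}$ and $f_{-c^{-1},v}$ lie in $SU(V)$, and since $u,v$ are basis vectors they are among the generators of $T_{2m}$, hence lie in $T_{2m}$.

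Next I would compute the action of $X=f_{-c^{-1},u}\circ f_{-c^{-1},v}$ on $u$. Using $h(v,v)=0$ one finds $f_{-c^{-1},v}(u)=u-c^{-1}h(v,u)v=u+v$. Since $f_{-c^{-1},u}$ fixes $u$ (because $h(u,u)=0$) and sends $v$ to $v-c^{-1}h(u,v)u=v-u$, linearity of $f_{-c^{-1},u}$ gives $X(u)=f_{-c^{-1},u}(u+v)=u+(v-u)=v$. Thus $X\in T_{2m}$ satisfies $Xu=v$, which is exactly the assertion $u\sim v$.

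There is essentially no serious obstacle here: the content is the bookkeeping that makes the scalars land in $R$. Concretely, it is the hypothesis $h(u,v)\in R^\times$, rather than the weaker $h(u,v)\in S^\times$, that forces $-h(u,v)^{-1}\in R$, and this is precisely what is needed for $f_{-c^{-1},u}$ and $f_{-c^{-1},v}$ to be unitary (hence to belong to $T_{2m}$). Everything else is a one-line linear computation inside the rank-$2$ free submodule $Su+Sv$, and the result will serve as the base case for the subsequent transitivity statements about symplectic pairs.
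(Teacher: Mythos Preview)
Your proof is correct. The only difference from the paper's argument is that you use a product of two generating transvections, $f_{-c^{-1},u}\circ f_{-c^{-1},v}$, whereas the paper uses a single transvection $f_{-c^{-1},\,v-u}$: one checks that $v-u$ is a basis vector (since $h(v-u,v)=-c\in S^\times$) of length zero (since $h(u,v)\in R$), and then $f_{-c^{-1},v-u}(u)=u-c^{-1}h(v-u,u)(v-u)=u+(v-u)=v$. Your route has the minor advantage that you need not verify the auxiliary vector $v-u$ is itself a basis vector of length zero, since you work directly with the given $u$ and $v$; the paper's route is marginally more economical in that it exhibits a single element of $T_{2m}$ carrying $u$ to $v$. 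Either way the key point, which you identify correctly, is that the hypothesis $h(u,v)\in R^\times$ (not merely $S^\times$) is exactly what puts the scalar $-c^{-1}$ into $R$ and hence makes the relevant $f$'s lie in $T_{2m}$.
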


\begin{proof}
    We observe that \( h(v-u,v) = -h(u,v) \in R^\times \).
    Therefore, by Lemma \ref{lem:basisone},
    \( v-u \) is a basis vector. Moreover \( v-u \) has length zero since \( h(u,v) \in R \).
    So
    \( f_{-a^{-1},v-u} \in T_{2m} \). One readily checks that
    \( f_{-a^{-1},v-u}(u) = v \).
\end{proof}

\begin{lemma}
    \label{lem:UrelatedtoaU}
    Let \( u \) be a basis vector of length zero and let \( a \in R^\times \).
    Then \( u \sim au \).
\end{lemma}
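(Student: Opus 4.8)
The plan is to deduce $u\sim au$ from Lemma~\ref{lem:prodinRimpliesrelated} by producing a single vector $z$ that is a common ``isotropic neighbour'' of $u$ and of $au$, i.e.\ a basis vector of length zero with $h(u,z)$ and $h(au,z)$ both in $R^\times$. Once such a $z$ is available, Lemma~\ref{lem:prodinRimpliesrelated} applied to the pair $(u,z)$ gives $u\sim z$, the same lemma applied to $(au,z)$ gives $au\sim z$, and transitivity of $\sim$ finishes the proof.

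The real content is the construction of $z$. First I would invoke Lemma~\ref{lem:basisone}(2): since $u$ is a basis vector there is a vector $b$ among the symplectic basis $u_1,v_1,\dots,u_m,v_m$ with $\epsilon:=h(u,b)\in S^\times$. The point to exploit is that every vector of a symplectic basis is isotropic, so $h(b,b)=0$; hence $z:=b\epsilon^{-1}$ satisfies $h(z,z)=(\epsilon^{-1})^*h(b,b)\epsilon^{-1}=0$ and $h(u,z)=\epsilon\epsilon^{-1}=1$, and $z$ is again a basis vector since it differs from $b$ by a unit scalar. This is the step that requires care, and the reason the argument must route through an isotropic basis vector rather than through an arbitrary dual vector: if one started instead from some $z_0$ with $h(u,z_0)=1$ and tried to correct its length by replacing $z_0$ by $z_0+cu$, one would be forced to solve $h(z_0,z_0)=c-c^*$, and such a $c$ need not exist in the skew hermitian (in particular ramified) setting — whereas the isotropic symplectic basis vector is length zero for free.

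With $z$ fixed, the conclusion is routine bookkeeping. The pair $(u,z)$ consists of length-zero basis vectors with $h(u,z)=1\in R^\times$, so $u\sim z$ by Lemma~\ref{lem:prodinRimpliesrelated}. Likewise $au$ is a basis vector, it has length $h(au,au)=a^*h(u,u)a=0$, and $h(au,z)=a^*h(u,z)=a\in R^\times$, the last equality because $a=a^*$; so Lemma~\ref{lem:prodinRimpliesrelated} applied to $(au,z)$ gives $au\sim z$, and therefore $u\sim z\sim au$. I do not anticipate any genuine obstacle beyond making sure, as just indicated, that $z$ can be taken isotropic and that $a=a^*$ keeps $h(au,z)$ inside $R^\times$.

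As an alternative that bypasses Lemma~\ref{lem:prodinRimpliesrelated}, one can simply exhibit an explicit element of $T_{2m}$ carrying $u$ to $au$: with $z$ as above, the product $f_{1,z}\,f_{1-a^{-1},u}\,f_{-a,z}$ has all three coefficients ($-a$, $1-a^{-1}$, $1$) in $R$ because $a\in R^\times$, each factor is therefore a transvection in $T_{2m}$, and a one-line computation using $h(u,u)=h(z,z)=0$, $h(u,z)=1$ shows the product sends $u$ to $au$.
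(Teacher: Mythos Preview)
Your argument is correct. Both proofs hinge on Lemma~\ref{lem:prodinRimpliesrelated} and an intermediate isotropic basis vector, but the constructions differ. The paper takes an arbitrary $v'$ with $h(u,v')=1$ and corrects it to length zero by setting $v=v'+\tfrac12 h(v',v')u$, then argues $u\sim u+a^{-1}v$ and $au\sim a^{-1}v\sim u+a^{-1}v$. Your route instead exploits that a vector of the fixed symplectic basis is already isotropic, scales it to obtain $z$ with $h(u,z)=1$, and reaches the conclusion in two applications of Lemma~\ref{lem:prodinRimpliesrelated} rather than three. The payoff of your choice is exactly what you flagged: you never need to solve $h(z_0,z_0)=c-c^*$, so the argument does not rely on $2\in S^\times$, whereas the paper's correction term $\tfrac12 h(v',v')$ tacitly does. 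Your explicit product $f_{1,z}\,f_{1-a^{-1},u}\,f_{-a,z}$ also checks out and gives a pleasant self-contained witness.
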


\begin{proof}
    By Lemma \ref{lem:basisone} there is some \( v' \) such that \(
    h(u,v')= 1\). Let \( v = v'+\frac12 h(v',v')u \).
    So \( (u,v) \) is a symplectic pair.
    By Lemma \ref{lem:prodinRimpliesrelated},
    \( u \sim u +a^{-1}v \). However, \( (au, a^{-1}v)\)
    is also a symplectic pair. Therefore \( au \sim a^{-1}v \sim a^{-1}v + u \),
    also by Lemma \ref{lem:prodinRimpliesrelated}.
\end{proof}


\begin{lemma}
    Suppose that \( u \) and \( v \) belong to a common symplectic
    set. Then \( u \sim v \).
    \label{lem:sympbasis}
\end{lemma}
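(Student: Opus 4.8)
The plan is to reduce everything to Lemma~\ref{lem:prodinRimpliesrelated}, using the basic observation that every vector occurring in a symplectic set is a basis vector of length zero: length zero holds by the definition of a symplectic set, and the basis vector property follows from Lemma~\ref{lem:basisone}, since each such vector pairs to $1$ or $-1$ (hence to a unit of $S$) with its partner. Write the common symplectic set as $p_1,q_1,\dots,p_l,q_l$ with $h(p_i,q_i)=1$ and all remaining pairings among these vectors equal to zero.

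First I would treat the case in which $u$ and $v$ lie in the same pair, i.e.\ $\{u,v\}=\{p_i,q_i\}$ for some $i$. Then $h(u,v)\in\{1,-1\}\subseteq R^\times$, and since $u$ and $v$ are basis vectors of length zero, Lemma~\ref{lem:prodinRimpliesrelated} gives $u\sim v$ immediately. In particular $p_i\sim q_i$ for every $i$, so $\sim$ does not distinguish the two members of a single pair.

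Now suppose $u$ and $v$ lie in different pairs. By the previous case together with transitivity of $\sim$, I may replace $u$ and $v$ by their pair-partners if necessary, and so assume $u=p_i$ and $v=p_j$ with $i\neq j$ (this situation occurs only when $m\geq2$). The decisive point is to exhibit one isotropic vector that pairs to a unit with both $p_i$ and $p_j$: take $w=q_i+q_j$. Then $h(w,w)=0$, because the $q$'s are mutually orthogonal and each has length zero; $h(w,p_i)=h(q_i,p_i)=-1\neq0$, so $w$ is a basis vector by Lemma~\ref{lem:basisone}; and $h(p_i,w)=h(p_i,q_i)=1\in R^\times$ while $h(p_j,w)=h(p_j,q_j)=1\in R^\times$. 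Two applications of Lemma~\ref{lem:prodinRimpliesrelated} then give $p_i\sim w$ and $p_j\sim w$, hence $u=p_i\sim p_j=v$, as desired.

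The only genuine subtlety is that inside a symplectic set all cross-pairings $h(p_i,p_j)$, $h(p_i,q_j)$, $h(q_i,q_j)$ with $i\neq j$ vanish, so Lemma~\ref{lem:prodinRimpliesrelated} cannot be applied directly to two vectors coming from different pairs; the device of passing through the auxiliary isotropic vector $w=q_i+q_j$, which pairs with each of $p_i$ and $p_j$ to the unit $1$, circumvents this. No further case analysis is needed, and Lemma~\ref{lem:UrelatedtoaU} plays no role in this particular statement.
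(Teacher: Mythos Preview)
Your proof is correct and follows essentially the same approach as the paper: first handle partners in a single symplectic pair directly via Lemma~\ref{lem:prodinRimpliesrelated}, then link two different pairs through the auxiliary isotropic vector $q_i+q_j$ (the paper's $z_i+z_j$), which pairs to $1\in R^\times$ with both $p_i$ and $p_j$. The paper's write-up is more terse, but the idea and the key lemma invoked are identical.
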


\begin{proof}
    Suppose that \( w_1,z_1,\dots,w_l,z_l \) is a symplectic set.
    By
    Lemma \ref{lem:prodinRimpliesrelated},
    \( w_i \sim z_i \) for \( i = 1,\cdots,m \).
    So it suffices to show that \( w_i \sim w_j \) for \( i \neq j \).
    But \( w_i \sim z_i+z_j \) by Lemma~\ref{lem:prodinRimpliesrelated} and
    similarly \( w_j \sim z_i+z_j \).
\end{proof}

\begin{lemma}
    Let \( u \) be a basis vector of length zero. If \( m \geq 2 \) then
    \( u \sim u_1 \).
\end{lemma}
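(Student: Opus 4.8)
The plan is to reduce, by at most one transvection, to a situation in which Lemma~\ref{lem:prodinRimpliesrelated} can be applied twice. Write $u=\sum_{i=1}^{m}(a_iu_i+b_iv_i)$; from the symplectic relations one has $h(u_i,u)=b_i$, $h(v_i,u)=-a_i$, $h(u,v_i)=a_i^*$ and $h(u,u_i)=-b_i^*$. I will exhibit a length-zero basis vector $w$ with the two properties that (i) $w\sim u_1$ and (ii) $h(u,w)\in R^\times$; then, since $u$ and $w$ are both length-zero basis vectors, Lemma~\ref{lem:prodinRimpliesrelated} gives $u\sim w$, whence $u\sim u_1$.

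First I arrange that some coefficient of $u$ in the second hyperbolic plane $\langle u_2,v_2\rangle$ — which is available precisely because $m\geq2$ — is a unit of $S$. If $a_i$ or $b_i$ is a unit for some $i$ with $2\leq i\leq m$, there is nothing to do. Otherwise all of $a_i,b_i$ with $i\geq2$ lie in the maximal ideal $\m$ of $S$, so, $u$ being a basis vector, Lemma~\ref{lem:basisone} forces $a_1$ or $b_1$ to be a unit. In the first case I apply the transvection $f_{1,u_2+v_1}$, in the second the transvection $f_{1,u_2+u_1}$: since $u_2+v_1$ and $u_2+u_1$ are length-zero basis vectors, Lemma~\ref{lem:unitarytransevections} shows both transvections lie in $T_{2m}$, and a direct computation (using $h(u_2+v_1,u)=b_2-a_1$, resp. $h(u_2+u_1,u)=b_2+b_1$) shows that the resulting $T_{2m}$-equivalent length-zero basis vector has $u_2$-coefficient $a_2+b_2-a_1$, resp. $a_2+b_2+b_1$, which is a unit because $S$ is local. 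So, after replacing $u$ by this vector if necessary, we may assume that for some $k$ with $2\leq k\leq m$ the coefficient $a_k$ of $u_k$ in $u$ is a unit (if instead a $v_k$-coefficient $b_k$ is the available unit, one argues identically with $v_k$ replaced by $u_k$ and $d$ adjusted accordingly below).

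Now set $d=(a_k^*)^{-1}(1-a_1^*)\in S$ and $w=v_1+d\,v_k$. Since the base change replacing $v_1$ by $w$ is unipotent, $w$ is a basis vector; since $h(v_1,v_k)=h(v_k,v_1)=h(v_k,v_k)=0$, the vector $w$ has length zero; its $v_1$-coefficient is $1$, so $h(w,u_1)=-1\in R^\times$ and Lemma~\ref{lem:prodinRimpliesrelated} gives $w\sim u_1$; finally $h(u,w)=h(u,v_1)+h(u,v_k)\,d=a_1^*+a_k^*d=a_1^*+(1-a_1^*)=1\in R^\times$. Applying Lemma~\ref{lem:prodinRimpliesrelated} to the pair $u,w$ now yields $u\sim w\sim u_1$, as required.

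The only step with real content — and the only place where $m\geq2$ is genuinely used — is the first maneuver. A one–hyperbolic–plane argument would require $u\sim\alpha u_1$ for the unit $\alpha$ equal to the $u_1$-coefficient of a ``cleared'' $u$, and, in contrast with Lemma~\ref{lem:UrelatedtoaU}, this can fail when $\alpha\in S^\times\setminus R^\times$; the second hyperbolic plane is precisely what allows us to rescale $h(u,w)$ into the fixed ring $R$ so that Lemma~\ref{lem:prodinRimpliesrelated} becomes available. Everything else is a routine computation using only that $S$ is local.
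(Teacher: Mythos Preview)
Your argument is correct. The route, however, differs from the paper's. The paper simply relabels the symplectic basis so that $h(u,v_1)\in S^\times$, notes that $a_1^*u_1,\,a_1^{-1}v_1,\,u_2,v_2,\dots,u_m,v_m$ is again a symplectic basis with $h(u,a_1^{-1}v_1)=1\in R$, and then applies Lemma~\ref{lem:prodinRimpliesrelated} once to get $u\sim a_1^{-1}v_1$, followed by two applications of Lemma~\ref{lem:sympbasis} (elements of a common symplectic set are $\sim$-related) to obtain $a_1^{-1}v_1\sim u_2\sim u_1$. No preliminary transvection and no case analysis are needed. By contrast, you bypass Lemma~\ref{lem:sympbasis} altogether: you first push a unit coefficient into a hyperbolic plane with index $k\geq 2$ (possibly via one explicit transvection), and then build the bridge vector $w=v_1+d\,v_k$ so that both $h(u,w)$ and $h(w,u_1)$ land in $R^\times$, allowing two direct uses of Lemma~\ref{lem:prodinRimpliesrelated}. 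The paper's proof is shorter and more conceptual; yours is more hands-on but has the virtue of being self-contained relative to Lemma~\ref{lem:prodinRimpliesrelated} alone, which makes transparent exactly where the second hyperbolic plane is used (to decouple the two pairing conditions on $w$).
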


\begin{proof}
    As above
    \( u = \sum_{i=1}^m (a_i u_i + b_iv_i) \) where \( a_i = h(u,v_i ) \)
    and \( b_i = -h(u,u_i) \).
    By Lemma~\ref{lem:basisone} we may suppose,
    without loss of generality, that \( a_1 \in S^\times \). Then
    \[ a_1^*u_1,a_1^{-1}v_1,u_2,v_2,\cdots,u_m,v_m \] is a symplectic
    basis of \( V \) and \( h(u,a_1^{-1}v_1) = 1 \in R \).
    By Lemma \ref{lem:prodinRimpliesrelated}, \( u \sim a_1^{-1}v_1 \).
    But, by Lemma \ref{lem:sympbasis} and since \( m \geq 2 \),
    \( a_1^{-1}v_1 \sim u_2 \). Therefore \( u \sim u_2 \).
    By Lemma \ref{lem:sympbasis} again, \( u_2 \sim u_1 \).
\end{proof}

As an immediate consequence we have the following.

\begin{cor}
    \label{cor:transitivityonbasisvectorsoflengthzero}
    If \( m \geq 2 \) then \( T_{2m} \) acts transitively on
    the set of basis vectors of length zero.
\end{cor}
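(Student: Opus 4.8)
The plan is to read this off directly from the lemma just proved, which asserts that for $m \geq 2$ every basis vector $u$ of length zero satisfies $u \sim u_1$. First I would recall that $\sim$ is an equivalence relation on $V$, since by definition it is the orbit equivalence of the group $T_{2m}$ acting on $V$. Hence, given any two basis vectors $u$ and $u'$ of length zero, the preceding lemma yields $u \sim u_1$ and $u' \sim u_1$, and symmetry together with transitivity of $\sim$ gives $u \sim u'$. Unwinding the definition of $\sim$, this says precisely that there is some $X \in T_{2m}$ with $Xu = u'$; since $u$ and $u'$ were arbitrary, this is transitivity. (The hypothesis $m \geq 2$ is used only through the preceding lemma, which in turn invokes Lemma~\ref{lem:sympbasis}.)

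It then remains only to note that the set of basis vectors of length zero is $T_{2m}$-stable, so that the phrase ``$T_{2m}$ acts transitively on it'' is meaningful: indeed $T_{2m} \subseteq SU(V) \subseteq U(V)$, so every element of $T_{2m}$ preserves $h$, hence preserves length, and is invertible, hence carries bases to bases and so basis vectors to basis vectors. Combining this with the previous paragraph, all basis vectors of length zero form a single $T_{2m}$-orbit, which is the assertion. There is essentially no obstacle here: the substance is entirely contained in the preceding lemma (and, through it, in Lemmas~\ref{lem:prodinRimpliesrelated}, \ref{lem:UrelatedtoaU} and~\ref{lem:sympbasis}), and the corollary is a purely formal consequence of the transitivity of an equivalence relation.
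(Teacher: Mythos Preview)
Your proof is correct and follows exactly the paper's approach: the paper presents this corollary as an immediate consequence of the preceding lemma (that every basis vector of length zero satisfies $u \sim u_1$ when $m \geq 2$), and you have simply spelled out the routine verification that this implies transitivity.
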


\begin{lemma}
    \label{lem:symppairprodunitinR}
    Suppose that \( (u,v) \) and \( (u,w) \) are symplectic pairs
    and that \( h(v,w) \in R^\times \). Then \( (u,v) \sim (u,w) \)
\end{lemma}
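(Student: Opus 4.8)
The plan is to reduce the statement about symplectic pairs sharing the first vector $u$ to the already-established transitivity results on basis vectors of length zero. Since $(u,v)$ and $(u,w)$ are symplectic pairs, both $v$ and $w$ are basis vectors (by Lemma~\ref{lem:basisone}, since $h(u,v)=1\in S^\times$), and both have length zero. The hypothesis $h(v,w)\in R^\times$ is exactly the condition needed to invoke Lemma~\ref{lem:prodinRimpliesrelated}, but I must be careful: that lemma produces an element of $T_{2m}$ carrying $v$ to $w$, and I need an element carrying $v$ to $w$ \emph{while fixing $u$}. So the first step is to identify an explicit transvection $f_{a,z}$ with $z$ a suitable length-zero basis vector lying in $\ker h(\cdot,u)$ (equivalently $h(u,z)=0$), so that $f_{a,z}$ fixes $u$ and moves $v$ to $w$.

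First I would set $z = w - v$ and compute $h(v,z) = h(v,w) - h(v,v) = h(v,w)\in R^\times$, so by Lemma~\ref{lem:basisone} the vector $z$ is a basis vector; moreover $z$ has length zero since $h(w-v,w-v) = h(w,w) - h(w,v) - h(v,w) + h(v,v) = -h(w,v)-h(v,w)$, and since $h$ is skew hermitian with $h(v,w)\in R$ this is $-h(v,w)^* - h(v,w) = h(v,w) - h(v,w) = 0$ (using $h(w,v) = -h(v,w)^* = -h(v,w)$). Next I would check $h(u,z) = h(u,w) - h(u,v) = 1 - 1 = 0$, which is the crucial point: it guarantees both that $f_{a,z}$ fixes $u$ (since $f_{a,z}(u) = u + a h(z,u) z$ and $h(z,u) = -h(u,z)^* = 0$) and, together with $h(z,z)=0$ and $a\in R$, that $f_{a,z}\in SU(V)$ by Lemma~\ref{lem:unitarytransevections}, so in fact $f_{a,z}\in T_{2m}$. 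Then I would choose $a$ so that $f_{a,z}(v) = v + a h(z,v) z = w$; since $h(z,v) = -h(v,z)^* = -h(v,w)$ is a unit in $R$ and $z = w-v$, the choice $a = -h(v,w)^{-1}$ gives $f_{a,z}(v) = v + z = w$.

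Putting these together: the single transvection $X = f_{-h(v,w)^{-1},\,w-v}$ lies in $T_{2m}$, satisfies $X(u) = u$ and $X(v) = w$, hence $(u,v)\sim(u,w)$ by definition. The only point requiring genuine care — and the place where the hypothesis $h(v,w)\in R^\times$ rather than merely $S^\times$ is used — is verifying that $z = w-v$ has length zero and that $a$ can be taken in $R$; both rely on $h(v,w)$ being $*$-symmetric, which forces $h(w,v) = -h(v,w)$ and collapses the cross terms. Everything else is a direct computation, essentially a rerun of the argument in Lemma~\ref{lem:prodinRimpliesrelated} carried out inside the subspace $\{x : h(u,x)=0\}$ so as to preserve $u$.
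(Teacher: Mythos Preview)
Your proposal is correct and takes exactly the same approach as the paper: the paper's one-line proof simply names the transvection $f_{h(w,v)^{-1},\,w-v}$ and asserts it fixes $u$ and sends $v$ to $w$, which is your $f_{-h(v,w)^{-1},\,w-v}$ (since $h(w,v)=-h(v,w)$ when $h(v,w)\in R$). You have supplied the verifications the paper omits.
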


\begin{proof}
    Observe that \( f_{h(w,v)^{-1},w-v} \), fixes
    \( u \) and maps \( v \) to \( w \).
\end{proof}

\begin{lemma}
    Suppose that \( m \geq 2 \) and that \( (u,v) \) is a symplectic pair.
    We then have \( (u,v) \sim (u_1,v_1) \).
\end{lemma}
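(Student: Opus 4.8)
The plan is to reduce the general symplectic pair $(u,v)$ to the standard pair $(u_1,v_1)$ by first moving $u$ and then correcting $v$. First I would use Corollary \ref{cor:transitivityonbasisvectorsoflengthzero}: since $u$ is a basis vector of length zero (it is half of a symplectic pair, so $h(u,u)=0$, and it is a basis vector by Lemma \ref{lem:basisone} because $h(u,v)=1\in S^\times$), there is some $X\in T_{2m}$ with $Xu=u_1$. Replacing $(u,v)$ by $(Xu,Xv)=(u_1,Xv)$, we are reduced to the case $u=u_1$; so it suffices to show that any symplectic pair of the form $(u_1,v)$ satisfies $(u_1,v)\sim(u_1,v_1)$.

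Now with $u=u_1$ fixed, the task is to move the second vector $v$ to $v_1$ while keeping $u_1$ fixed, i.e.\ to produce elements of $T_{2m}$ fixing $u_1$ and carrying $v$ to $v_1$. The natural tool is Lemma \ref{lem:symppairprodunitinR}: if I can find an intermediate symplectic pair $(u_1,w)$ with $h(v,w)\in R^\times$ and also $h(w,v_1)\in R^\times$, then $(u_1,v)\sim(u_1,w)\sim(u_1,v_1)$. Here is where $m\geq 2$ is used: the ``extra'' hyperbolic plane spanned by $u_2,v_2$ provides room to build such a $w$. Concretely, writing $v$ in the symplectic basis and using $h(u_1,v)=1$, one sees $v-v_1\in u_1^\perp$; I would look for $w$ of the shape $v_1+(\text{something in }\langle u_2,v_2,\dots\rangle)$ adjusted by a multiple of $u_1$ so that $(u_1,w)$ is again a symplectic pair, with the $\langle u_2,v_2\rangle$-component chosen (as a unit multiple of $u_2$, say) to guarantee both $h(v,w)$ and $h(v_1,w)$ land in $R^\times$. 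One may first need Lemma \ref{lem:UrelatedtoaU} or a further application of Lemma \ref{lem:symppairprodunitinR} to absorb a scalar from $R^\times$ if the pairings come out only as units of $S$ rather than of $R$; passing through two or three intermediate pairs is acceptable.

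The main obstacle will be the bookkeeping in choosing the intermediate vector $w$: one must simultaneously ensure (i) $(u_1,w)$ is a genuine symplectic pair, i.e.\ $h(u_1,w)=1$, $h(w,w)=0$, and the other standard orthogonality relations relative to whatever symplectic set one is tracking; (ii) $h(v,w)$ is a unit; and (iii) $h(w,v_1)$ is a unit — and moreover to get units \emph{in $R$}, not merely in $S$, so that Lemma \ref{lem:symppairprodunitinR} applies. The length-zero condition $h(w,w)=0$ is automatic once the relevant pairings of the component vectors lie in $R$, by the same cancellation that appears in the proof of Lemma \ref{lem:unitarytransevections}, but one has to arrange the coefficients carefully; the freedom coming from the rank $\geq 4$ (the plane $u_2,v_2$) is exactly what makes all three conditions compatible. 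Once $w$ is in hand, the conclusion is immediate from Lemma \ref{lem:symppairprodunitinR} applied twice.
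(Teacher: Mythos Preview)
Your overall strategy coincides with the paper's: reduce to $u=u_1$ via Corollary~\ref{cor:transitivityonbasisvectorsoflengthzero}, then build an intermediate symplectic pair $(u_1,w)$ with $w$ of the form $c_1u_1+v_1+(\text{something in the extra plane})$ and apply Lemma~\ref{lem:symppairprodunitinR} repeatedly. That is exactly what the paper does, with $w=z_\delta=u_1+v_1+\delta v_2$.

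There is, however, a genuine gap in your sketch. Write $v=a_1u_1+v_1+\sum_{i\ge 2}(a_iu_i+b_iv_i)$. If $w=c_1u_1+v_1+\gamma u_2$ as you suggest, then $h(w,v)=c_1^*-a_1+\gamma^* b_2$; to land in $R^\times$ by choice of $\gamma$ you need $b_2\in S^\times$ (and using $v_2$ instead would need $a_2\in S^\times$). When \emph{all} of $a_2,b_2,\dots,a_m,b_m$ lie in the maximal ideal, no perturbation of $w$ inside $\langle u_2,v_2,\dots\rangle$ can move $h(w,v)$ off the coset $c_1^*-a_1+\mathfrak j$, and there is no reason for this to contain an element of $R^\times$ (indeed $a_1$ need not lie in $R$). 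Your ``freedom from rank $\ge 4$'' remark does not by itself handle this case, and neither does Lemma~\ref{lem:UrelatedtoaU}, which only rescales by elements of $R^\times$.

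The paper resolves this with a basis change rather than a direct choice of $w$: it replaces $u_1,v_1,u_2,v_2$ by the symplectic basis $u_1,\,v_1+u_2,\,u_2,\,v_2+u_1$, after which the coefficient of $u_2$ in the expansion of $v$ becomes $-1+a_2\in S^\times$ (local ring!), reducing to the good case. One then needs the short chain $(u_1,v_1)\sim(u_1,u_1+v_1)\sim(u_1,v_1+u_2)$ to connect the two ``$v_1$''s. So your instinct that ``two or three intermediate pairs'' may be needed is right, but the missing idea is this forced-unit-via-basis-change step. Also note that your worry about $R^\times$ versus $S^\times$ is largely moot: in the paper's argument the pairings are arranged to equal $1$ on the nose, so the issue never arises.
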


\begin{proof}
    By Corollary
    \ref{cor:transitivityonbasisvectorsoflengthzero} we may assume
    without loss of generality that \( u = u_1 \).
    Since \( h(u_1,v) =1 \) we can write \( v = a_1u_1 + v_1 +
    a_2u_2+b_2 v_2 + \cdots + a_m u_m+b_m v_m\).
    Now suppose that one of \( a_2,b_2,\cdots,a_m,b_m \) is a unit.
    Say that \( a_2 \) is a unit --similar arguments
    apply to all other cases. For \( \delta \in S \), let \( z_\delta
    = u_1+v_1+\delta v_2\). Now \( (u_1,z_\delta) \) is a symplectic pair
    and \( h(z_\delta,v_1) = 1 \). Therefore \( (u_1,v_1)\sim (u_1,z_\delta) \)
    for all \( \delta \in S \)
    by Lemma \ref{lem:symppairprodunitinR}. On the other
    hand \( h(z_\delta,v) = 1-a_1-a_2\delta^* \). Since \( a_2  \) is
    a unit we see that \( (u_1,z_\delta) \sim (u_1,v) \) if
    \( \delta = -(a_2^{-1}a_1)^* \). Therefore \( (u_1,v_1) \sim (u_1,v) \)
    in this case.

    Now suppose that none of \( a_2,b_2,\cdots,a_m,b_m \) is a unit.
    In this case we observe that \( u_1,v_1+u_2,u_2,v_2+u_1,\cdots,u_m,v_m \)
    is also a symplectic basis of \( V \). Now we see that
    \( v = (a_1-b_2)u_1 +v_1+u_2 +(-1+a_2)u_2 +b_2(v_2+u_1) +
    \sum_{i\geq 3} a_i u_i +
    b_i v_i\). But \( -1+a_2 \in S^{\times} \) since \( S \) is local so,
    by the previous paragraph, \( (u_1,v_1+u_2) \sim (u_1,v) \).
    Finally, two applications of Lemma \ref{lem:symppairprodunitinR}
    yield
    \( (u_1,v_1) \sim (u_1,u_1+v_1) \sim (u_1,v_1+u_2) \). Thus
    \( (u_1,v_1) \sim (u_1,v) \) in this case as well.
\end{proof}

\begin{cor}
    \label{cor:transitivityonsymppairs}
    If \( m \geq 2 \) then \( T_{2m} \) acts transitively on the
    set of symplectic pairs in \( V \).
\end{cor}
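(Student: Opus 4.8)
The plan is to read the corollary off the preceding lemma with essentially no extra work. Recall that $\sim$ was introduced as the equivalence relation on $V$ — and correspondingly on symplectic pairs — induced by the action of the group $T_{2m}$; in particular $\sim$ is reflexive, symmetric and transitive, these three properties coming respectively from the identity element, from inverses, and from composition of elements of $T_{2m}$. So it suffices to exhibit a single symplectic pair to which every symplectic pair is $\sim$-related, and then invoke symmetry and transitivity.

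The preceding lemma supplies exactly this: for $m \geq 2$, every symplectic pair $(u,v)$ satisfies $(u,v) \sim (u_1,v_1)$. Hence, given any two symplectic pairs $(u,v)$ and $(w,z)$ in $V$, we have $(u,v) \sim (u_1,v_1)$ and $(w,z) \sim (u_1,v_1)$, so transitivity and symmetry of $\sim$ give $(u,v) \sim (w,z)$. Unwinding the definition of $\sim$ on symplectic pairs, there is $X \in T_{2m}$ with $Xu = w$ and $Xv = z$, which is precisely the asserted transitivity of the $T_{2m}$-action on symplectic pairs.

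I expect no real obstacle here: all the substantive content already sits in the chain of lemmas running from Lemma~\ref{lem:prodinRimpliesrelated} through Corollary~\ref{cor:transitivityonbasisvectorsoflengthzero} (transitivity on basis vectors of length zero) to the preceding lemma (the upgrade from vectors to symplectic pairs), each of which genuinely uses $m \geq 2$ — via Lemma~\ref{lem:sympbasis} and the availability of a second hyperbolic plane in which to maneuver. The only point worth a moment's care is noting that the relation on symplectic pairs really is an equivalence relation, but this is immediate since it is defined through a group action.
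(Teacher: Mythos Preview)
Your proposal is correct and matches the paper's approach exactly: the corollary is stated without proof in the paper precisely because it is immediate from the preceding lemma, which shows that every symplectic pair is $\sim$-related to the fixed pair $(u_1,v_1)$. Your observation that $\sim$ is an equivalence relation because it arises from a group action is the only ingredient needed, and the paper takes this for granted.
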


Now we turn to the special case \( m=1 \).
Thus, let \( W = S^2 \) and define \( h: W \times W
\rightarrow S\) by \( h(u,v) = u^*Jv \) where
\( J  = \begin{pmatrix} 0&1 \\ -1 & 0 \end{pmatrix} \).
Let \( u_1,v_1 \) be the standard basis of \( W \).
For $b,c\in R$,
\( U_b = \begin{pmatrix} 1 & b  \\ 0 & 1 \end{pmatrix} \)
is the matrix of \( f_{b,u_1} \) and
\( L_c = \begin{pmatrix} 1 & 0  \\ c & 1 \end{pmatrix} \)
is the matrix of \( f_{-c,v_1} \). We next identify $U(V)$
with its matrix representation relative to the basis \( u_1,v_1 \).
Thus \( U_b,L_c \in T_2 \)
for all \( b,c \in R \).
Moreover, \( w = \begin{pmatrix} 0& 1 \\ -1 & 0 \end{pmatrix} \)
can be written as \( U_1L_{-1}U_1 \) and \( -I_2 = w^2 \), so these
matrices also belong to \( T_2 \).

\begin{lemma}
    \label{lem:su2sl2}
    Under the matrix representation described above
    \( SU(W) = SL(2,R)\).
\end{lemma}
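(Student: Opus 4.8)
The plan is to prove both inclusions of $\SU(W)=\SL(2,R)$ by a direct $2\times 2$ matrix computation, using nothing more than the adjugate identity for $2\times 2$ matrices over a commutative ring; neither the transvection machinery of this section nor the locality of $S$ will play any role.

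First I would handle $\SL(2,R)\subseteq\SU(W)$. For any $x\in M(2,R)$ the $*$-transpose $x^*$ coincides with the ordinary transpose $x^{T}$, since $*$ restricts to the identity on $R$. The universal identity $x^{T}Jx=\det(x)\,J$, valid for every $2\times 2$ matrix $x$ over a commutative ring, is a one-line check. Hence if $x\in\SL(2,R)$ then $x^*Jx=x^{T}Jx=\det(x)J=J$, so $x\in U(W)$, and together with $\det x=1$ this gives $x\in\SU(W)$.

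Next I would handle the reverse inclusion $\SU(W)\subseteq\SL(2,R)$, which is the only point that needs a moment's thought. Let $x=\begin{pmatrix}a&b\\c&d\end{pmatrix}\in\SU(W)$, so $\det x=1$ and $x^*Jx=J$. Rearranging gives $x^*=Jx^{-1}J^{-1}$. Since $\det x=1$ we have $x^{-1}=\begin{pmatrix}d&-b\\-c&a\end{pmatrix}$, and multiplying out shows
$$
Jx^{-1}J^{-1}=\begin{pmatrix}a&c\\b&d\end{pmatrix}.
$$
Comparing this with $x^*=\begin{pmatrix}a^*&c^*\\b^*&d^*\end{pmatrix}$ entry by entry yields $a^*=a$, $b^*=b$, $c^*=c$, $d^*=d$, i.e.\ all entries of $x$ lie in $R$; since moreover $\det x=1$, we conclude $x\in\SL(2,R)$.

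I do not expect a genuine obstacle: the argument is purely formal and rests only on the commutativity of $S$ (so that $\det$ is multiplicative and the transpose/adjugate identities are available) and on $*$ acting trivially on $R$. The one thing to be careful about is not to conflate the $*$-transpose $x^*$ with the naive transpose before the identity $x^*=Jx^{-1}J^{-1}$ has been established; once that identity is in hand, the conclusion that every matrix coefficient of $x$ is $*$-fixed is immediate.
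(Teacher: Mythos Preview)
Your argument is correct. Both inclusions are verified exactly as you describe, and nothing beyond the commutativity of $S$ (for the adjugate formula and the identity $x^{T}Jx=\det(x)J$) and the fact that $R$ is the fixed ring of $*$ is used.

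Your route differs from the paper's in execution. The paper unpacks the condition $X^{*}JX=J$ into the scalar equations $a^{*}c\in R$, $b^{*}d\in R$, $a^{*}d-c^{*}b=1$, adjoins $ad-bc=1$, and then performs an ad hoc chain of substitutions (e.g.\ $a^{*}cb=c^{*}ab=a(a^{*}d-1)=\dots$) to force $a^{*}=a$, and similarly for $b,c,d$. You instead pass directly to the matrix identity $x^{*}=Jx^{-1}J^{-1}$, insert the adjugate expression for $x^{-1}$, and read off $a^{*}=a$, $b^{*}=b$, $c^{*}=c$, $d^{*}=d$ by comparing entries. This is shorter and more transparent; the paper's computation is essentially a coordinate unwinding of the same identity. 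Neither approach needs $S$ to be local.
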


\begin{proof}
    Let \( X = \begin{pmatrix} a&b \\ c&d \end{pmatrix} \) be an
    element of \( GL(W) \). Then $X\in SU(W)$ if and only if
\begin{equation}
\label{cond}
 a^*c \in R, b^*d \in R, a^*d - c^*b = 1, ad -bc =1.
\end{equation}
Suppose first that these conditions hold. Then, since \( S \) is commutative, we have
    \begin{eqnarray*}
        a^*cb &=& c^*ab \\
        &=& a(a^*d-1) \\
        &=& a^*ad-a \\
        &=& a^*(1+bc)-a\\
        &=& a^*cb+a^*-a
    \end{eqnarray*}
    whence \( a^* = a \).
    Similar arguments yield \( b,c,d \in R \). So \( X \in SL(2,R) \).
    On the other hand, it is clear that if \( X \in SL(2,R) \) then all conditions (\ref{cond}) hold.
\end{proof}

Let $B(2,R)$ be the subgroup of $SL(2,R)$ of upper triangular matrices.

\begin{lemma}
    \label{lem:diagonals} $B(2,R)$ is included in $T_2$.
    \end{lemma}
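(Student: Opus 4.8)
The plan is to reduce the claim to the membership of the elementary matrices $U_b$, $L_c$ and $w$ in $T_2$, which was already recorded in the discussion preceding the statement. By Lemma~\ref{lem:su2sl2} we have $SU(W)=SL(2,R)$, so a typical element of $B(2,R)$ has the shape $\begin{pmatrix} t & b \\ 0 & t^{-1}\end{pmatrix}$ with $t\in R^\times$ and $b\in R$. Since
$$
\begin{pmatrix} t & b \\ 0 & t^{-1}\end{pmatrix}=\begin{pmatrix} t & 0 \\ 0 & t^{-1}\end{pmatrix}U_{t^{-1}b},
$$
and $t^{-1}b$ ranges over all of $R$ as $b$ does, while every $U_c$ with $c\in R$ already lies in $T_2$, it suffices to prove that the diagonal matrix $d_t:=\begin{pmatrix} t & 0 \\ 0 & t^{-1}\end{pmatrix}$ belongs to $T_2$ for every $t\in R^\times$.

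For the latter I would invoke the classical identity expressing a diagonal matrix as a product of elementary ones. A direct $2\times 2$ computation, valid over any commutative ring with $t$ invertible, gives
$$
U_t\,L_{-t^{-1}}\,U_t=\begin{pmatrix} 0 & t \\ -t^{-1} & 0\end{pmatrix},
$$
and multiplying on the right by $w^{-1}$ yields
$$
U_t\,L_{-t^{-1}}\,U_t\,w^{-1}=\begin{pmatrix} 0 & t \\ -t^{-1} & 0\end{pmatrix}\begin{pmatrix} 0 & -1 \\ 1 & 0\end{pmatrix}=\begin{pmatrix} t & 0 \\ 0 & t^{-1}\end{pmatrix}=d_t.
$$
Now $U_t$ and $L_{-t^{-1}}$ lie in $T_2$ by the remarks preceding the lemma, and $w=U_1L_{-1}U_1\in T_2$ with $T_2$ a group, so $w^{-1}\in T_2$ as well; hence $d_t\in T_2$. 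Combined with the previous paragraph this gives $\begin{pmatrix} t & b \\ 0 & t^{-1}\end{pmatrix}\in T_2$ for all $t\in R^\times$ and $b\in R$, that is, $B(2,R)\subseteq T_2$.

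There is essentially no obstacle here beyond verifying the two matrix identities above, which are immediate. The only points that deserve an explicit word are: (i) that $U_c,L_c\in T_2$ holds precisely because $u_1$ and $v_1$ have length zero, so that $f_{c,u_1}$ and $f_{-c,v_1}$ (with $c\in R$) are genuine unitary transvections among the defining generators of $T_2$; and (ii) that $B(2,R)$ really does consist exactly of the matrices $\begin{pmatrix} t & b \\ 0 & t^{-1}\end{pmatrix}$, which follows at once from the determinant-one condition defining $SL(2,R)$.
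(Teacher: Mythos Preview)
Your proof is correct, and it takes a different route from the paper's. The paper invokes Lemma~\ref{lem:UrelatedtoaU} (that $u\sim au$ for $a\in R^\times$) to produce some $X\in T_2$ with $Xu_1=au_1$; since $\det X=1$ this forces $X=\begin{pmatrix}a&c\\0&a^{-1}\end{pmatrix}$, and then any element of $B(2,R)$ is $XU_d$ for a suitable $d$. Your argument bypasses that transitivity lemma entirely and instead writes down the classical elementary-matrix factorisation $d_t=U_tL_{-t^{-1}}U_tw^{-1}$. This is shorter and fully self-contained, using only the identities recorded just before the lemma. A small bonus: your computation makes no appeal to $\tfrac12$, whereas the proof of Lemma~\ref{lem:UrelatedtoaU} as written does (in forming a symplectic partner $v=v'+\tfrac12 h(v',v')u$), so your route sidesteps that hypothesis at this particular step. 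The paper's approach, on the other hand, keeps the $m=1$ case within the same geometric framework (action of $T_{2m}$ on length-zero basis vectors) used for higher rank.
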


\begin{proof} Let $Y=\begin{pmatrix} a & b \\0 & a^{-1}
    \end{pmatrix}\in B(2,R)$. By Lemma \ref{lem:UrelatedtoaU}, there is some \( X \in T_2 \)
    such that \( Xu_1 = au_1 \). Therefore \( X =
    \begin{pmatrix} a & c \\ 0 & a^{-1} \end{pmatrix}\), since \( \det(X) = 1 \). Thus $Y=XU_d$ for a suitable $d\in R$,
    whence $Y\in T_2$.
\end{proof}

\begin{lemma}
    \label{lem:t2sl2}
    \( T_2 = SL(2,R) \).
\end{lemma}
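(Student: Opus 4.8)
The plan is to deduce this directly from the machinery already assembled in this section, via a one-step Bruhat reduction for $SL(2,R)$. By Lemma \ref{lem:su2sl2} we have $SU(W)=SL(2,R)$, and $T_2$ is by construction a subgroup of $SU(W)$, so $T_2\subseteq SL(2,R)$; it remains only to prove the reverse inclusion. The idea is to use the elementary matrices $L_c\in T_2$ to clear the lower-left entry of an arbitrary element of $SL(2,R)$, landing inside $B(2,R)$, and then to invoke Lemma \ref{lem:diagonals}.

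Concretely, I would take $X=\begin{pmatrix} a&b\\ c&d\end{pmatrix}\in SL(2,R)$, so that $a,b,c,d\in R$ and $ad-bc=1$. Since $R$ is local, $a$ and $c$ cannot both be non-units, as that would force the non-unit $ad-bc$ to equal $1$; hence at least one of them lies in $R^\times$. Suppose first $a\in R^\times$. Then $-ca^{-1}\in R$, so $L_{-ca^{-1}}\in T_2$, and the lower-left entry of $L_{-ca^{-1}}X$ is $c-ca^{-1}a=0$, so $L_{-ca^{-1}}X$ is an upper triangular matrix of determinant $1$, i.e. an element of $B(2,R)$. By Lemma \ref{lem:diagonals}, $L_{-ca^{-1}}X\in T_2$, and since $T_2$ is a group with $L_{-ca^{-1}}\in T_2$, we get $X=L_{ca^{-1}}\bigl(L_{-ca^{-1}}X\bigr)\in T_2$. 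If instead $a\notin R^\times$, then $c\in R^\times$, and I would reduce to the previous case using $w\in T_2$: the matrix $wX=\begin{pmatrix} 0&1\\ -1&0\end{pmatrix}\begin{pmatrix} a&b\\ c&d\end{pmatrix}=\begin{pmatrix} c&d\\ -a&-b\end{pmatrix}$ has upper-left entry the unit $c$, so $wX\in T_2$ by the case just treated, whence $X=w^{-1}(wX)\in T_2$. This gives $SL(2,R)\subseteq T_2$ and hence equality.

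There is no serious obstacle here: the argument is a routine Bruhat-type decomposition of $SL(2,R)$ over a local ring, and the only point requiring a moment's care is the bookkeeping of which entries are units together with the verification that the relevant scalars lie in $R$, so that the corresponding $L_c$ (and, if one prefers a symmetric formulation, the $U_b$) genuinely belong to $T_2$. All of the substantive content — identifying $SU(W)$ with $SL(2,R)$, and placing $B(2,R)$, the $L_c$, the $U_b$ and $w$ inside $T_2$ — has already been established in Lemmas \ref{lem:su2sl2} and \ref{lem:diagonals} and in the discussion preceding them, so the proof amounts to fitting these pieces together.
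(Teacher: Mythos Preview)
Your proof is correct and follows essentially the same approach as the paper: reduce to the case where the upper-left entry is a unit (replacing $X$ by $wX$ if necessary), clear the lower-left entry with an appropriate $L_{-ca^{-1}}$, and invoke Lemma \ref{lem:diagonals} to handle the resulting element of $B(2,R)$. The paper presents this more tersely, but the argument is the same.
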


\begin{proof}
    Suppose that \( X = \begin{pmatrix}a&b\\c&d \end{pmatrix} \in
    SL(2,R)\). Replacing \( X \) by \( wX \) if necessary, we can
    assume that \( a \in R^\times \).
    Now \( L_{-ca^{-1}}X\in B(2,R) \), so $X\in T_2$ by Lemma
    \ref{lem:diagonals}.
\end{proof}

Finally, we are in position to prove Theorem
\ref{thm:SUgenbytransvections}.

\begin{proof}[Proof of Theorem \ref{thm:SUgenbytransvections}]
    We prove this by induction on \( m \) where \( 2m \) is the
    rank of \( V \). The case \( m =1  \) is given by Lemmas
    \ref{lem:su2sl2} and \ref{lem:t2sl2}.
    Suppose that \( m \geq 2 \) and let \( X \in SU(V) \).
    By Corollary \ref{cor:transitivityonsymppairs}
    there is some \( Y \in T_{2m} \) such that
    \( Yu_m = Xu_m \) and \( Yv_m = Xv_m \).
    Now let \( V' =  Su_1+Sv_1 + \dots +Su_{m-1} +Sv_{m-1}  \).
    Observe that \( V' = (Su_m+Sv_m)^\perp \).
    Since \( Y^{-1}X \) fixes both
    \( u_m \) and \( v_m \), we see that
    \( V' \) is invariant under \( Y^{-1}X \). So
    \( Y^{-1}X   = Z \oplus I_2\) where \( Z \in GL(V') \).
    Moreover it is clear that \( Z \in SU(V') \) since
    \( Y^{-1}X \in SU(V) \).
    By induction, \( Z \in T_{2(m-1)} \) and so
    \( X = Y(Z \oplus I_2) \in T_{2m} \) as required.
\end{proof}

We next specialize to the case when the involution $*$ is trivial. In this case, $S=R$ and $h:V\times V\to R$ is
a nondegenerate alternating bilinear form. The subgroup of $GL(V)$ preserving $h$ is the symplectic group $Sp(V)$,
so we have $U(V)=Sp(V)$ in this case. Now $V$ has a symplectic basis $u_1,v_1,\dots,u_m,v_m$ relative to which the Gram
matrix is $J_m=\begin{pmatrix} 0 & 1_m \\ -1_m & 0 \end{pmatrix}$. For \( m=1 \), an easy calculation shows that the group preserving
this $J_1$ is $SL(2,R)$, that is, $Sp(2,R)=SL(2,R)$. Thus, if $m=1$ we have $U(V)=Sp(V)=SU(V)$. But then the inductive proof of Theorem
\ref{thm:SUgenbytransvections} applies to yield the following result.

\begin{theorem}\label{symp} Suppose the involution is trivial $*$. Then $U(V)=Sp(V)=SU(V)$ is generated by
the set of all symplectic transvections $f_{a,v}$, where $a\in R$ and $v\in V$ is a basis vector. In particular,
every symplectic transformation of $V$ has determinant 1.
\end{theorem}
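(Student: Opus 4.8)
The plan is to observe that essentially all of the work has already been done in Section~\ref{antes}, and that only two small additional points are needed. First, since $h$ is alternating we have $h(v,v)=0$ for every $v\in V$, so the generating set appearing in Theorem~\ref{thm:SUgenbytransvections} is precisely the set of symplectic transvections $f_{a,v}$ with $a\in R$ and $v$ a basis vector; that theorem then already yields $SU(V)=T_{2m}$. Hence the genuinely new content is the equality $U(V)=SU(V)$, i.e.\ that every symplectic transformation has determinant~$1$. Equivalently, one must check that the inductive argument that proves Theorem~\ref{thm:SUgenbytransvections} still works when the starting element $X$ is only assumed to lie in $U(V)$ rather than in $SU(V)$.

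I would therefore argue by induction on $m$, strengthening the inductive statement to $U(V)=SU(V)=T_{2m}$. For the base case $m=1$, Lemma~\ref{lem:su2sl2} with $*$ trivial gives $SU(W)=SL(2,R)$, while a direct computation with the Gram matrix $J_1$ shows that the full group preserving $h$ is also $SL(2,R)$; combined with Lemma~\ref{lem:t2sl2} this gives $U(V)=Sp(2,R)=SL(2,R)=SU(V)=T_2$. For the inductive step, let $m\geq 2$ and $X\in U(V)$. Since $X$ preserves $h$, the pair $(Xu_m,Xv_m)$ is a symplectic pair, so by Corollary~\ref{cor:transitivityonsymppairs} there is $Y\in T_{2m}\subseteq SU(V)$ with $Yu_m=Xu_m$ and $Yv_m=Xv_m$. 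Then $Y^{-1}X\in U(V)$ fixes $u_m$ and $v_m$, hence fixes $Su_m+Sv_m$ pointwise and stabilises its orthogonal complement $V'=Su_1+Sv_1+\dots+Su_{m-1}+Sv_{m-1}$, so $Y^{-1}X=Z\oplus I_2$ with $Z\in U(V')$; here $V'$ carries the restriction of $h$, which is a nondegenerate alternating form admitting the symplectic basis $u_1,v_1,\dots,u_{m-1},v_{m-1}$. By the induction hypothesis $Z\in SU(V')=T_{2(m-1)}$, whence $X=Y(Z\oplus I_2)\in T_{2m}\subseteq SU(V)$. This gives $U(V)=SU(V)=T_{2m}$, and in particular $\det X=1$ for all $X\in U(V)=Sp(V)$.

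There is essentially no serious obstacle: the proof of Theorem~\ref{thm:SUgenbytransvections} invokes the hypothesis $\det X=1$ only to conclude $Z\in SU(V')$ in the inductive step, and that conclusion is now furnished by the strengthened induction hypothesis, so the argument transfers verbatim. The single input specific to the trivial involution is the base-case identity $Sp(2,R)=SL(2,R)$, which is the routine matrix computation already recorded in the text; the only other thing worth a line is that the restriction of $h$ to $V'$ remains nondegenerate, which is immediate from the displayed symplectic basis. The final sentence of the theorem is then just the observation that membership in $T_{2m}$ forces determinant~$1$, by Lemma~\ref{lem:transvdetone}.
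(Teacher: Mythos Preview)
Your proposal is correct and follows essentially the same approach as the paper: observe that in the trivial-involution case every vector has length zero, check directly that $Sp(2,R)=SL(2,R)$ so that $U(V)=SU(V)$ when $m=1$, and then rerun the induction of Theorem~\ref{thm:SUgenbytransvections} with the strengthened hypothesis $U(V)=SU(V)=T_{2m}$. The paper phrases this more tersely (``the inductive proof of Theorem~\ref{thm:SUgenbytransvections} applies''), but the content is the same.
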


Going back to our general set up, we observe that our arguments also allow us to obtain an
elementary proof of Witt's extension theorem
in this context.
This result is already known in greater generality
(see \cite{R} and \cite{F}) but we include it here
as our arguments give an elementary and self contained proof in
our particular setting.
More precisely, we have the following.

\begin{theorem}
    Suppose that \( w_1,z_1,\dots,w_k,z_k \) is a
    symplectic set in \( V \) for some \( k \leq m \).
    Then there exist \( w_{k+1},z_{k+1},\dots,w_m,z_m \)
    such that \( w_1,z_1,\dots,w_m,z_m \) is a symplectic basis
    for \( V \).
    \label{thm:WittCancellation}
\end{theorem}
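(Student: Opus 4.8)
The plan is to prove the statement by induction on $m$, with the element $X$ furnished by Corollary~\ref{cor:transitivityonsymppairs} doing the main work. For the base case $m=1$ we have $k\in\{0,1\}$: if $k=0$ the conclusion is just the standing hypothesis that $V$ has a symplectic basis, and if $k=1$ one checks directly that the symplectic pair $(w_1,z_1)$ is already a symplectic basis of the rank-$2$ module $V$. Indeed, $h(w_1,z_1)=1$ makes $w_1$ a basis vector by Lemma~\ref{lem:basisone}, and writing $z_1$ in a basis $\{w_1,y\}$ the identity $1=h(w_1,z_1)=h(w_1,y)\cdot(\text{coeff of }y)$ (using $h(w_1,w_1)=0$) forces that coefficient to be a unit, so $\{w_1,z_1\}$ is a basis; the remaining symplectic relations hold by hypothesis.

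For the inductive step let $m\ge 2$. If $k=0$ we again invoke the standing hypothesis, so assume $k\ge 1$. The key device is to move the first pair onto the last standard hyperbolic pair: by Corollary~\ref{cor:transitivityonsymppairs} there is $X\in T_{2m}\subseteq U(V)$ with $Xu_m=w_1$ and $Xv_m=z_1$. First I would record that for $2\le i\le k$ each of $X^{-1}w_i,X^{-1}z_i$ is orthogonal to both $u_m$ and $v_m$ --- for instance $h(u_m,X^{-1}w_i)=h(w_1,w_i)=0$ and $h(v_m,X^{-1}w_i)=h(z_1,w_i)=0$, because $X$ preserves $h$ and $w_1,z_1,w_i,z_i$ lie in a common symplectic set --- so these vectors lie in $V'=Su_1+Sv_1+\dots+Su_{m-1}+Sv_{m-1}=(Su_m+Sv_m)^\perp$. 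Since $X^{-1}\in U(V)$, the list $X^{-1}w_2,X^{-1}z_2,\dots,X^{-1}w_k,X^{-1}z_k$ is a symplectic set of size $k-1\le m-1$ inside $V'$, and $V'$ is free of rank $2(m-1)$, carries the nondegenerate restriction of $h$, and has $u_1,v_1,\dots,u_{m-1},v_{m-1}$ as a symplectic basis, so the inductive hypothesis applies to it.

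Applying the inductive hypothesis produces $w'_{k+1},z'_{k+1},\dots,w'_m,z'_m\in V'$ extending $X^{-1}w_2,X^{-1}z_2,\dots,X^{-1}w_k,X^{-1}z_k$ to a symplectic basis of $V'$; since $V=V'\oplus(Su_m+Sv_m)$ with the two summands orthogonal, adjoining $u_m,v_m$ gives a symplectic basis of $V$, and applying $X$ (an $h$-isometry taking bases to bases) yields the symplectic basis $w_1,z_1,w_2,z_2,\dots,w_k,z_k,Xw'_{k+1},Xz'_{k+1},\dots,Xw'_m,Xz'_m$ of $V$. Setting $w_j=Xw'_j$ and $z_j=Xz'_j$ for $j>k$ completes the induction. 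The one genuine obstacle is that the naive route --- passing to $W^\perp$ for $W=\langle w_1,z_1,\dots,w_k,z_k\rangle$ and extending a symplectic basis of $W^\perp$ --- is unavailable, since a nondegenerate skew hermitian module over a local ring need not possess a symplectic basis (precisely why the paper hypothesises one for $V$); transitivity of $T_{2m}$ on symplectic pairs is exactly what lets us reduce instead to $V'$, which does carry one. Beyond that, only the orthogonality computation placing the transported symplectic set in $V'$, and the rank-$2$ base case, require any care.
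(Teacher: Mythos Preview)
Your proof is correct and follows essentially the same approach as the paper: induction on $m$, using Corollary~\ref{cor:transitivityonsymppairs} to normalize the first symplectic pair and then reducing to a rank-$2(m-1)$ summand with a known symplectic basis. The only differences are cosmetic---you transport via $X$ in the opposite direction (mapping $(u_m,v_m)$ to $(w_1,z_1)$ rather than $(w_1,z_1)$ to $(u_1,v_1)$) and you spell out the $m=1$, $k=1$ base case that the paper simply calls trivial.
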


\begin{proof}
    We prove this by induction on \( m \). The case
    \( m=1 \) is trivial as either \( k=0 \) or \( k=1 \)
    in this case.
    For the inductive step, we note that by Corollary
    \ref{cor:transitivityonsymppairs} there is some
    \( X \in SU(V) \) such that \( Xw_1 = u_1 \) and
    \( Xz_1 = v_1 \). Thus we can assume, without loss of
    generality, that \( w_1 = u_1 \) and \( z_1 = v_1 \).
    Now \( w_2,z_2,\dots,w_k,z_k \) is a
    symplectic set in \( (Sw_1+Sz_1)^\perp = (Su_1+Sv_1)^\perp
    = Su_2+Sv_2+\dots+Su_m+Sv_m\).
    By induction \( w_2,z_2,\dots,w_k,z_k \)
    can be extended to a symplectic basis of
\( Su_2+Sv_2 + \dots + Su_m+Sv_m \) as required.
\end{proof}

\section{Surjectivity of reduction homomorphisms}

We maintain all hypotheses and notation adopted in \S\ref{antes}. In addition,
we assume in this section that $2\in S^\times $ and set \( \tilde S = S/\mathfrak i \),
a quotient ring of \( S \) where \( \mathfrak i \) is proper \( * \)-invariant
ideal of \( S \). Clearly \(\tilde  S \) is also a local ring
and \( * \) induces an involution on \( \tilde S \) that we
also denote by \( * \).
Moreover \( \tilde V = V/\mathfrak i V \) is a free \( \tilde S \)
module of rank \( 2m \).
For \( v \in V \) let \( \tilde v \) be the image of \( v \) under
the quotient map \( V \rightarrow \tilde V \).
The
composition
\( V \times V \rightarrow S \rightarrow \tilde S \)
factors through the quotient \( V \times V \rightarrow \tilde V \times \tilde V \)
and
induces a skew hermitian form on \( \tilde V \), which we denote
by \( \tilde h \). It is clear that
\( \tilde u_1,\tilde v_1,\dots,\tilde u_m,\tilde v_m \)
is a symplectic basis for \( \tilde V \).
Given \( X \in U(V) \), define \( \tilde X \) by
\( \tilde X \tilde u = \widetilde{Xu}  \).
Clearly the mapping \( X \mapsto \tilde X \)
defines a homomorphism
\( \pi:SU(V) \rightarrow SU(\tilde V) \), which we will refer to as
the reduction homomorphism.
In this section we will prove that this homomorphism is surjective.

\begin{lemma}
    \label{lem:liftingbasisvectorsoflengthzero}
    Let \( \tilde u \) be a basis vector of \( \tilde V \) of
    length zero. There is some \( w \in V \) such that
    \( w \) is a basis vector of \( V \) of length \( 0 \)
    and \( \tilde w = \tilde u \).
\end{lemma}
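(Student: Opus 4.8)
The plan is to start with an arbitrary lift $v \in V$ of $\tilde u$, which we may assume is a basis vector of $V$ by the following argument: since $\tilde u$ is a basis vector of $\tilde V$, by Lemma~\ref{lem:basisone} there is some basis element $w$ of the symplectic basis $u_1,v_1,\dots,u_m,v_m$ with $\tilde h(\tilde u, \tilde w) \in \tilde S^\times$; lifting, we get $h(v,w) \in S^\times$ (units lift since $\mathfrak i$ is proper and $S$ is local), so $v$ is a basis vector of $V$ again by Lemma~\ref{lem:basisone}. The issue is that $h(v,v)$ need not be zero, only $h(v,v) \in \mathfrak i$. So the task reduces to correcting $v$ by an element of $\mathfrak i V$ to kill its length while keeping it a basis vector.

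Next I would produce a suitable partner. Since $v$ is a basis vector, by Lemma~\ref{lem:basisone} there is $z \in V$ with $h(v,z) = 1$; replacing $z$ by $z - \tfrac12 h(z,z) v$ (using $2 \in S^\times$) we may assume $(v,z)$ is a symplectic pair, i.e. $h(z,z) = 0$ as well, while still $h(v,z) = 1$. Note $h(v,v) =: c \in \mathfrak i$, and observe that $c \in R$ because $h(v,v)^* = -h(v,v)$ forces $c$ to be $*$-skew, but we actually want to use that $c$ lies in the fixed ring — here one must be slightly careful: in the skew hermitian setting $h(v,v)$ is $*$-skew, not $*$-symmetric. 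Instead, set $w = v - \tfrac{c}{2} z$. Then a direct expansion gives
\[
h(w,w) = h(v,v) - \tfrac{c}{2} h(v,z) - \tfrac{c^*}{2} h(z,v) + \tfrac{c c^*}{4} h(z,z) = c - \tfrac{c}{2} + \tfrac{c^*}{2} = c - \tfrac{c}{2} - \tfrac{c}{2} = 0,
\]
using $h(z,v) = -h(v,z)^* = -1$, $h(z,z) = 0$, and $c^* = -c$. So $w$ has length zero. Moreover $w \equiv v \pmod{\mathfrak i V}$ since $c \in \mathfrak i$, so $\tilde w = \tilde v = \tilde u$, and $h(w,z) = h(v,z) - \tfrac{c}{2} h(z,z) = 1 \in S^\times$, so $w$ is a basis vector of $V$ by Lemma~\ref{lem:basisone}. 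This $w$ is the required vector.

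The only real point requiring care is the sign bookkeeping in the length computation and confirming that $c = h(v,v)$ is $*$-skew (so that $c^* = -c$ makes the cross terms cancel) rather than $*$-symmetric — this is the one place the skew hermitian hypothesis is genuinely used, and it is exactly what makes the naive choice $w = v + (\text{something})z$ work out. Everything else is a routine application of Lemma~\ref{lem:basisone} together with the facts that $2 \in S^\times$ and that units and the symplectic-basis structure lift along the proper ideal $\mathfrak i$.
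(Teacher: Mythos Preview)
Your overall approach---lift $\tilde u$ to a basis vector $v\in V$, then subtract a suitable multiple of a partner vector to kill the length---is exactly the paper's approach. However, there is a genuine gap in your construction of the symplectic pair $(v,z)$.

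The replacement $z\mapsto z-\tfrac12 h(z,z)\,v$ does \emph{not} yield $h(z,z)=0$, because $h(v,v)=c$ is not yet zero. Writing $d=h(z,z)$ (so $d^*=-d$) one computes
\[
h\bigl(z-\tfrac{d}{2}v,\;z-\tfrac{d}{2}v\bigr)
= d + \tfrac{d}{2} - \tfrac{d^*}{2} + \tfrac{d^*}{4}\,c\,d
= 2d - \tfrac{c d^2}{4},
\]
which need not vanish. (The same replacement also fails to preserve $h(v,z)=1$: one gets $1-\tfrac{cd}{2}$ instead.) The standard ``complete the symplectic pair'' trick, as used e.g.\ in Lemma~\ref{lem:UrelatedtoaU}, requires the \emph{first} vector to have length zero, which is precisely what you do not yet have.

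The fix is already sitting in your first paragraph: you found a symplectic basis element $w\in\{u_1,v_1,\dots,u_m,v_m\}$ with $a:=h(v,w)\in S^\times$, and this $w$ satisfies $h(w,w)=0$ automatically. Use it directly in place of $z$: set $w':=v-\tfrac12 a^{-1}c\,w$. Then the same expansion (now with the $h(z,z)$ term genuinely zero) gives $h(w',w')=0$, while $\tilde w'=\tilde v=\tilde u$ since $c\in\mathfrak i$, and $h(w',w)=a-\tfrac{c^*}{2}a^{-1}\cdot 0=a\in S^\times$ shows $w'$ is a basis vector. This is precisely what the paper does.
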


\begin{proof}
    Since \( \tilde u \) is a basis vector of \( \tilde V \), Lemma \ref{p} allows us to
    assume, without loss of generality, that
    \( \tilde h (\tilde u, \tilde v_1) \in \tilde S^\times\).
    Therefore \( a = h(u,v_1) \in S^\times \).
    By assumption, we have \( \delta = h(u,u) \in \mathfrak i \).
    Let \( w  = u - \frac12 a^{-1}\delta v_1 \).
    Clearly \( w \) is a basis vector since \( u \) is a
    basis vector.
    Moreover \( \tilde w = \tilde u \) and one readily checks that
    \( h(w,w) = 0 \) as required.
\end{proof}

\begin{theorem} If $2\in S^\times$ then
    the reduction homomorphism \( \pi:SU(V) \rightarrow
    SU(\tilde V) \) is surjective.
    \label{thm:surjectivityofreduction}
\end{theorem}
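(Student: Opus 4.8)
The plan is to reduce the statement to lifting a set of generators of the target group. Since $2\in S^\times$ we also have $2\in\tilde S^\times$, so Theorem \ref{thm:SUgenbytransvections} applies to the free $\tilde S$-module $\tilde V$, which carries the nondegenerate skew hermitian form $\tilde h$ and the symplectic basis $\tilde u_1,\tilde v_1,\dots,\tilde u_m,\tilde v_m$ over the commutative local involutive ring $\tilde S$ with fixed ring $\tilde R$. Thus $SU(\tilde V)$ is generated by the transvections $f_{\tilde a,\tilde u}$ with $\tilde a\in\tilde R$, $\tilde h(\tilde u,\tilde u)=0$, and $\tilde u$ a basis vector of $\tilde V$. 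It therefore suffices to show that each such $f_{\tilde a,\tilde u}$ lies in the image of $\pi$.

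First I would lift the vector: by Lemma \ref{lem:liftingbasisvectorsoflengthzero} there is a basis vector $w\in V$ of length $0$ with $\tilde w=\tilde u$. Next I would lift the scalar. Given $\tilde a\in\tilde R$, choose any $s\in S$ reducing to $\tilde a$; since $\tilde a^*=\tilde a$ we have $s-s^*\in\mathfrak i$, so $a:=\tfrac12(s+s^*)\in R$ reduces to $\tfrac12(\tilde a+\tilde a^*)=\tilde a$. (In particular the natural map $R\to\tilde R$ is surjective, which is the one point where invertibility of $2$ is really needed.) By Lemma \ref{lem:unitarytransevections}, $f_{a,w}\in SU(V)$, and from the defining formula $f_{a,w}(x)=x+ah(w,x)w$ one gets $\widetilde{f_{a,w}(x)}=\tilde x+\tilde a\,\tilde h(\tilde w,\tilde x)\tilde w=f_{\tilde a,\tilde u}(\tilde x)$ for all $x\in V$, i.e.\ $\pi(f_{a,w})=f_{\tilde a,\tilde u}$. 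Hence every generator of $SU(\tilde V)$ lies in $\pi(SU(V))$, and $\pi$ is surjective.

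This argument is essentially formal once Lemmas \ref{lem:liftingbasisvectorsoflengthzero} and \ref{lem:unitarytransevections} and Theorem \ref{thm:SUgenbytransvections} are in hand; there is no serious obstacle. The only subtlety worth flagging is that $R\to\tilde R$ need not be surjective for a general involutive ring, so one cannot drop the hypothesis $2\in S^\times$: it is precisely what lets us average a lift with its conjugate to produce a $*$-symmetric lift of a given symmetric element of $\tilde S$.
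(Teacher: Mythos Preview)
Your proof is correct and follows essentially the same approach as the paper: reduce to lifting the generating transvections via Theorem \ref{thm:SUgenbytransvections}, lift the isotropic basis vector using Lemma \ref{lem:liftingbasisvectorsoflengthzero}, and lift the symmetric scalar by averaging $\tfrac12(s+s^*)$, then observe that $\pi(f_{a,w})=f_{\tilde a,\tilde u}$. Your write-up is slightly more explicit (e.g.\ invoking Lemma \ref{lem:unitarytransevections} and spelling out the computation of $\pi(f_{a,w})$), but the argument is the same.
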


\begin{proof}
    By Theorem \ref{thm:SUgenbytransvections} it suffices
    to show that \( f_{\tilde a, \tilde u} \) is in the
    image of \( \pi \) for all \( \tilde a \in \tilde S \)
    such that \( \tilde a^* = \tilde a \) and all
    basis vectors
    \( \tilde u  \) in \( \tilde V \) of length zero.
    By Lemma \ref{lem:liftingbasisvectorsoflengthzero}
    we can assume that \( h(u,u) = 0 \). Also, since
    \( \widetilde{\frac{a^*+a}2} = \tilde a \), we can
    assume that \( a^* = a \). Now \( f_{a,u}
    \in SU(V)\) and it is clear that \( \pi(f_{a,u})
    =f_{\tilde a,\tilde u} \).
\end{proof}

When the involution $*$ is trivial all vectors in $V$
have length 0 and all elements of $S$ are fixed by $*$. Thus, the
condition that $2\in S^\times$ is not required and as a
consequence of Theorem \ref{symp} we have the following result.

\begin{theorem} The reduction homomorphism \( \pi:Sp(V) \rightarrow
    Sp(\tilde V) \) is surjective.
\end{theorem}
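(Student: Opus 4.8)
The plan is to run the proof of Theorem~\ref{thm:surjectivityofreduction} almost verbatim, with Theorem~\ref{symp} in place of Theorem~\ref{thm:SUgenbytransvections}. When $*$ is trivial we have $S=R$, every element of $S$ is $*$-symmetric, and $h(v,v)=0$ for all $v\in V$ and all $\tilde v\in\tilde V$ (an alternating form is in particular skew); hence the hypothesis $2\in S^\times$ is not needed and the various symmetrization manoeuvres of \S\ref{antes} become vacuous. Note also that $Sp(V)=SU(V)$ here by Theorem~\ref{symp}, so the target group of the reduction map is literally the one treated in the previous section.

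The steps I would carry out are as follows. First, reduce to generators: by Theorem~\ref{symp}, $Sp(\tilde V)$ is generated by the symplectic transvections $f_{\tilde a,\tilde u}$ with $\tilde a\in\tilde S$ and $\tilde u$ a basis vector of $\tilde V$, so it suffices to realise each such $f_{\tilde a,\tilde u}$ as $\pi(g)$ for some $g\in Sp(V)$. Second, lift the scalar: since $*$ is trivial no symmetrization is required, so pick any $a\in S$ mapping to $\tilde a$. Third, lift the vector: choose any $u\in V$ with image $\tilde u$; by Lemma~\ref{p} there is a standard basis vector $w$ with $\tilde h(\tilde u,\tilde w)\in\tilde S^\times$, hence $h(u,w)\in S^\times$ because a proper ideal of the local ring $S$ lies in its maximal ideal, so an element of $S$ with unit image in $\tilde S$ is itself a unit; by Lemma~\ref{p} again, $u$ is a basis vector of $V$. (Alternatively one invokes the trivial-involution analogue of Lemma~\ref{lem:liftingbasisvectorsoflengthzero}, whose proof needs no invertibility of $2$ since $h(u,u)=0$ automatically.) Finally, $f_{a,u}$ is a symplectic transvection of $V$, hence lies in $Sp(V)=SU(V)$ by Lemma~\ref{lem:unitarytransevections} (or directly), and a one-line check from the formula $f_{a,u}(x)=x+a\,h(u,x)u$ gives $\pi(f_{a,u})=f_{\tilde a,\tilde u}$, which finishes the argument.

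I do not expect a genuine obstacle here. The only points that need a word of care are that Theorem~\ref{symp}'s generation statement carries no invertibility-of-$2$ hypothesis — which is clear from \S\ref{antes}, where that hypothesis never entered — and that basis vectors of $\tilde V$ lift to basis vectors of $V$, which is immediate from Lemma~\ref{p} together with the fact that units lift to units in a local ring. Everything else is bookkeeping rendered trivial by the triviality of the involution.
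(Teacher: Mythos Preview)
Your proposal is correct and follows essentially the same approach as the paper: the paper simply observes that when $*$ is trivial every vector has length zero and every scalar is $*$-symmetric, so the proof of Theorem~\ref{thm:surjectivityofreduction} goes through verbatim with Theorem~\ref{symp} replacing Theorem~\ref{thm:SUgenbytransvections} and without any need for $2\in S^\times$. You have spelled out the lifting of basis vectors via Lemma~\ref{p} more explicitly than the paper does, but the underlying argument is identical.
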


Next we consider the reduction homomorphism \( \pi:
U(V) \rightarrow U(\tilde V)\). Recall that \( a^*a \)
is the norm of \( a \). Let \( N \)
be the group of elements of norm
one in \( S \). Clearly \( \det(X) \in N \) for
\( X \in U(V) \). We would like to identify the
image of \( \det \). There are two cases to consider.

Let $\mathfrak j$ stand for the maximal ideal of $S$. We say that \( * \) is a ramified involution of \( S \)
if \( a^* - a \in \mathfrak j \) for all \( a \in S \).
Otherwise, we say that \( * \) is unramified. Observe
that \( * \) is unramified if and only if there is
some \( i \in S^\times \) such that \( i^* = -i \).

\begin{lemma}
    Suppose that \( * \) is unramified or that
    \( a \not\equiv -1 \mod \mathfrak j \).
    Then \( a \in N \) if and only if there is some
    \( b \in S^\times \) such that \( a = b(b^*)^{-1} \).
    \label{lem:normonedecompunramified}
\end{lemma}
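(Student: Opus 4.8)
The plan is as follows. The ``if'' direction requires no hypothesis: if $a = b(b^*)^{-1}$ with $b \in S^\times$, then applying $*$ and using that $*$ is an involution gives $a^* = ((b^*)^{-1})^* b^* = b^{-1}b^*$, so by commutativity of $S$ we get $a^*a = b^{-1}b^*b(b^*)^{-1} = 1$, i.e. $a \in N$. The content is entirely in the converse, which I would prove by an explicit Cayley-transform style construction.

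So suppose $a \in N$, i.e. $a^*a = 1$; note that $a$ is then a unit with $a^{-1} = a^*$, and also $aa^* = 1$ since $S$ is commutative. The key observation is that for \emph{any} $c \in S$ the element $b_c := c + ac^*$ satisfies
\[
a b_c^* = a(c^* + a^*c) = ac^* + (aa^*)c = ac^* + c = b_c ,
\]
so that, provided $b_c \in S^\times$, we obtain $a = b_c(b_c^*)^{-1}$ as desired. Thus everything reduces to choosing $c$ so that $b_c$ is a unit, and here the hypothesis enters.

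If $a \not\equiv -1 \mod \mathfrak j$, I would take $c = 1$: then $b_1 = 1+a \notin \mathfrak j$, so $b_1 \in S^\times$ because $S$ is local. Otherwise $a \equiv -1 \mod \mathfrak j$, and the hypothesis forces $*$ to be unramified, so there is $i \in S^\times$ with $i^* = -i$; since $2 \in S^\times$ throughout this section, $1 - a \equiv 2 \mod \mathfrak j$ with $2 \notin \mathfrak j$, hence $1 - a \in S^\times$, and taking $c = i$ gives $b_i = i + a i^* = i(1-a) \in S^\times$, again yielding $a = b_i(b_i^*)^{-1}$.

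The argument is short, and the only genuinely delicate point is the case $a \equiv -1 \mod \mathfrak j$: there the naive ``averaged'' element $1+a$ lies in $\mathfrak j$ and is useless, so one must instead twist by a skew-symmetric unit $i$, which is precisely what unramifiedness provides — and this is why that hypothesis cannot be dropped in general.
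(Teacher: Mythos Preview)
Your proof is correct and follows essentially the same approach as the paper: in both cases the element $b$ constructed is $1+a$ when $a\not\equiv -1\mod\mathfrak j$ and $i(1-a)$ when $a\equiv -1\mod\mathfrak j$ with $*$ unramified. Your unified identity $a\,b_c^* = b_c$ for $b_c = c + ac^*$ is a slightly cleaner way to verify that these choices work than the paper's direct fraction computation, but the underlying argument is the same.
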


\begin{proof} It is obvious that $aa^*=1$ provided \( a = b(b^*)^{-1} \) for some \( b \in S^\times \).
Suppose, conversely, that $aa^*=1$. Assume first that
    \( a \not\equiv -1 \mod \mathfrak j \).
    Then \( b = 1+a \in S^\times \) and therefore
    \( b^* \in S^\times \) also.
    Now
    \begin{eqnarray*}
        b(b^*)^{-1} &=& \frac{1+a}{1+a^*} \\
        &=& \frac{(1+a)^2}{(1+a^*)(1+a)} \\
        &=& \frac{1+2a+a^2}{1+a+a^*+a^*a} \\
        &=& \frac{a^*a+2a+a^2}{2+a+a^*} \\
        &=& a
    \end{eqnarray*}
    Assume next that \( a \equiv -1 \mod \mathfrak j \).
    Then $-a\equiv 1\not\equiv -1\mod \mathfrak j$, so by above \( c = 1-a \in S^\times \)
    satisfies $c(c^*)^{-1} = -a$. On the other hand, by assumption \( * \) is unramified and as observed above
    there is some \( i \in S^\times \) such that \( i^* = -i \), so that \( i(i^*)^{-1} = -1 \). It follows that $b=i(1-a)\in S^\times$
    satisfies \( b(b^*)^{-1} = a \).
\end{proof}

Observe that in the ramified case \( a^*a = 1 \) implies that
\( a^2 \equiv 1 \mod \mathfrak j \). So \( a \equiv \pm 1 \mod \mathfrak j \).
Now, if \( a \equiv -1 \mod \mathfrak j \) then there is no
\( b \) such that \( b(b^*)^{-1} = a \), for if there were such \( b \), then
\( 1\equiv bb^{-1} \equiv -1 \mod \mathfrak j\) which contradicts \( 2 \in S^\times \).

\begin{lemma}\label{q}
    Suppose that \( * \) is unramified. Then \( \det: U(V) \rightarrow
    N\) is a surjective group homomorphism.
    \label{lem:surjectivedetunramified}
\end{lemma}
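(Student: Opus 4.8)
The plan is to observe first that $\det:U(V)\to N$ is automatically a group homomorphism, since the determinant is multiplicative, and that $\det(X)\in N$ for $X\in U(V)$ has already been recorded just above the statement; hence the only thing left to prove is surjectivity. For this I would exhibit, for each $a\in N$, an explicit element of $U(V)$ with determinant $a$, built as a ``diagonal'' transformation on the symplectic basis $u_1,v_1,\dots,u_m,v_m$.

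Concretely: given $a\in N$, invoke Lemma \ref{lem:normonedecompunramified} --- this is exactly where the unramified hypothesis enters --- to obtain a unit $b\in S^\times$ with $a=b(b^*)^{-1}$. Then define $X\in\GL(V)$ by
$$
Xu_1=bu_1,\quad Xv_1=(b^*)^{-1}v_1,\quad Xu_i=u_i,\ \ Xv_i=v_i\ \ (2\le i\le m),
$$
and verify $X\in U(V)$. Using that $S$ is commutative and the scalar rule $h(ws,w't)=s^*t\,h(w,w')$ for $s,t\in S$, $w,w'\in V$ (which follows from skew hermiticity together with linearity of $h$ in the second variable), the only pairing that is not visibly unchanged is $h(Xu_1,Xv_1)=b^*(b^*)^{-1}h(u_1,v_1)=1=h(u_1,v_1)$; every other pairing is either untouched or pits a rescaled vector against a basis vector orthogonal to it. Since the matrix of $X$ in this basis is diagonal with entries $b,(b^*)^{-1},1,\dots,1$, we get $\det(X)=b(b^*)^{-1}=a$, and surjectivity follows.

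I do not expect a genuine obstacle here: the whole content is carried by Lemma \ref{lem:normonedecompunramified}, whose use of the unramified assumption is precisely what allows an arbitrary norm-one element to be written as $b(b^*)^{-1}$ (this representation fails in the ramified case for $a\equiv-1\bmod\mathfrak j$, as the paragraph preceding the lemma points out). The one place demanding a little care is keeping the scalar conventions straight when checking $X\in U(V)$, and in particular noticing that one must rescale both $u_1$ and $v_1$ --- scaling $u_1$ alone by $b$ would destroy $h(u_1,v_1)=1$ --- so that the paired entries $b$ and $(b^*)^{-1}$ occur in conjugate-inverse positions exactly as in the diagonal Bruhat elements $h_t$ of the earlier sections.
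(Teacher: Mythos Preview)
Your proof is correct and follows essentially the same approach as the paper: invoke Lemma~\ref{lem:normonedecompunramified} to write $a=b(b^*)^{-1}$, then take $X=\mathrm{diag}(b,(b^*)^{-1},1,\dots,1)$ with respect to the symplectic basis. The only difference is cosmetic---you verify $X\in U(V)$ by checking the pairings on basis vectors, while the paper phrases it as $X^*JX=J$.
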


\begin{proof}
    Let \( a \in N \). By Lemma \ref{lem:normonedecompunramified}
    \( a = b(b^*)^{-1} \) for some \( b \in S^\times \).
    Let \[ X = diag(b,(b^*)^{-1},1,\dots,1) .\] Clearly
    \( \det (X) = a \) and one readily checks that \( X^*JX = J \)
    as required.
\end{proof}

\begin{lemma}
    Suppose that \( * \) is ramified. Then \( \text{im}(\det)
    = N \cap (1+\mathfrak j)\).
    \label{lem:imagedetramified}
\end{lemma}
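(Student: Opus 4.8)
The plan is to prove the two inclusions $\text{im}(\det)\subseteq N\cap(1+\mathfrak j)$ and $N\cap(1+\mathfrak j)\subseteq\text{im}(\det)$ separately; the first is where the ramified hypothesis does its work, and the second is a repackaging of earlier lemmas.

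For the inclusion $\subseteq$, recall that $\det(X)\in N$ for every $X\in U(V)$, so it only remains to see that $\det(X)\equiv 1\pmod{\mathfrak j}$. I would reduce modulo $\mathfrak j$: since $\mathfrak j$ is $*$-invariant, reducing matrix entries gives a group homomorphism $U(V)\to U(\overline V)$, where $\overline V=V/\mathfrak j V$ is a free module over the residue field $\overline S=S/\mathfrak j$, equipped with the induced skew hermitian form $\overline h$ and the symplectic basis $\overline u_1,\overline v_1,\dots,\overline u_m,\overline v_m$. Because $*$ is ramified we have $a^*-a\in\mathfrak j$ for all $a\in S$, so the involution induced on $\overline S$ is trivial; and because $2\in S^\times$ the characteristic of $\overline S$ is not $2$. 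Hence $\overline h$ is a nondegenerate alternating form, so $U(\overline V)=Sp(\overline V)$, and by Theorem~\ref{symp} every element of this group has determinant $1$. Since $\overline{\det(X)}=\det(\overline X)=1$, we conclude $\det(X)\in 1+\mathfrak j$, as needed.

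For the inclusion $\supseteq$, let $a\in N$ with $a\equiv 1\pmod{\mathfrak j}$. Since $2\in S^\times$ this forces $a\not\equiv -1\pmod{\mathfrak j}$, so Lemma~\ref{lem:normonedecompunramified} applies and yields $b\in S^\times$ with $a=b(b^*)^{-1}$. Exactly as in the proof of Lemma~\ref{lem:surjectivedetunramified}, the matrix $X=\mathrm{diag}\big(b,(b^*)^{-1},1,\dots,1\big)$ satisfies $X^*JX=J$, hence lies in $U(V)$, and $\det(X)=b(b^*)^{-1}=a$. Combining the two inclusions gives $\text{im}(\det)=N\cap(1+\mathfrak j)$.

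The main (and essentially the only) obstacle is the first inclusion, specifically the observation that in the ramified case the reduction mod $\mathfrak j$ lands in the symplectic group over the residue field, which via Theorem~\ref{symp} pins the determinant inside $1+\mathfrak j$; everything else is routine once Lemmas~\ref{lem:normonedecompunramified} and \ref{lem:surjectivedetunramified} are in hand.
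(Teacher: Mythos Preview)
Your proof is correct and follows essentially the same argument as the paper's own proof: for the inclusion $\subseteq$ you reduce modulo $\mathfrak j$, observe that in the ramified case the induced involution on the residue field is trivial so the reduced group is symplectic, and invoke Theorem~\ref{symp}; for the inclusion $\supseteq$ you apply Lemma~\ref{lem:normonedecompunramified} (using $a\not\equiv -1$) and construct the diagonal matrix exactly as in Lemma~\ref{lem:surjectivedetunramified}. Your write-up is in fact a bit more explicit than the paper's in explaining why the reduced form is alternating and why $a\not\equiv -1\pmod{\mathfrak j}$, but the underlying ideas are the same.
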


\begin{proof}
    If \( a \in N \cap (1+\mathfrak j) \) then by Lemma
    \ref{lem:normonedecompunramified} \( a = b(b^*)^{-1} \)
    and then proceeding as in the proof of Lemma \ref{q}
    we see that \( a \in \text{im}(\det) \).
    On the other hand if \( X \in U(V)  \), then its reduction
    modulo \( \mathfrak j \) is a symplectic matrix over \( S/\mathfrak j \).
    It is well-known that a symplectic matrix over a field has determinant one
    (see Theorem \ref{symp}). Therefore \( \det X \equiv 1 \mod \mathfrak j \)
    as required.
\end{proof}

\begin{theorem} If $2\in S^\times$ then
    the reduction homomorphism \( \pi:U(V) \rightarrow U(\tilde V) \)
    is surjective.
    \label{thm:surjectivityofreductionfullunitary}
\end{theorem}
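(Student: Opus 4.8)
The plan is to reduce to the special unitary case, which is already settled by Theorem~\ref{thm:surjectivityofreduction}, by first matching determinants. Given $\tilde X\in U(\tilde V)$, set $\tilde a=\det\tilde X$; since $\tilde X$ preserves $\tilde h$, $\tilde a$ is a norm one element of $\tilde S$, i.e. $\tilde a\tilde a^*=1$. Suppose for the moment that we have found some $Y\in U(V)$ whose reduction satisfies $\det\tilde Y=\tilde a$. Then $\tilde Y^{-1}\tilde X\in SU(\tilde V)$, so by Theorem~\ref{thm:surjectivityofreduction} there is $Z\in SU(V)$ with $\tilde Z=\tilde Y^{-1}\tilde X$, and then $YZ\in U(V)$ reduces to $\tilde X$, completing the proof. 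So everything comes down to constructing such a $Y$, and for that it suffices to write $\tilde a=\tilde b(\tilde b^*)^{-1}$ for some $\tilde b\in\tilde S^\times$: since $\mathfrak i$ is proper, $\mathfrak i\subseteq\mathfrak j$, so any lift $b\in S$ of $\tilde b$ is automatically a unit, and then $Y=\mathrm{diag}(b,(b^*)^{-1},1,\dots,1)$ with respect to $u_1,v_1,\dots,u_m,v_m$ satisfies $Y^*JY=J$ exactly as in the proof of Lemma~\ref{q}, hence lies in $U(V)$, with $\det Y=b(b^*)^{-1}$ reducing to $\tilde a$.

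It remains to produce the decomposition $\tilde a=\tilde b(\tilde b^*)^{-1}$, and here I would split along the dichotomy introduced before Lemma~\ref{lem:normonedecompunramified}. If $*$ is unramified on $S$, choose $i\in S^\times$ with $i^*=-i$; then $\tilde i\in\tilde S^\times$ and $\tilde i^*=-\tilde i$, so $*$ is unramified on $\tilde S$ as well, and Lemma~\ref{lem:normonedecompunramified} applied over $\tilde S$ yields $\tilde b$ directly. If $*$ is ramified on $S$, then $a^*-a\in\mathfrak j$ for all $a\in S$ forces $*$ to be ramified on $\tilde S$ too, the maximal ideal of $\tilde S$ being $\mathfrak j/\mathfrak i$; moreover, as in the proof of Lemma~\ref{lem:imagedetramified}, the further reduction of $\tilde X$ modulo $\mathfrak j/\mathfrak i$ is a symplectic matrix over a field, hence of determinant $1$ by Theorem~\ref{symp}, so $\tilde a\equiv 1\pmod{\mathfrak j/\mathfrak i}$. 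Since $2$ is a unit, $\tilde a\not\equiv-1$, so the hypothesis of Lemma~\ref{lem:normonedecompunramified} is met over $\tilde S$ and again supplies $\tilde b$.

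The one place that needs genuine care is precisely this ramified case: there $\mathrm{im}(\det\colon U(V)\to N)$ is only the proper subgroup $N\cap(1+\mathfrak j)$ rather than the whole norm one group (Lemma~\ref{lem:imagedetramified}), so a priori it is not clear that every determinant arising in $U(\tilde V)$ can be realized from $U(V)$. The resolution is exactly the observation above that $\det\tilde X$ is itself forced to lie in the corresponding subgroup $\tilde N\cap(1+\mathfrak j/\mathfrak i)$, so nothing we actually need is missed. Once the determinant is matched, the remaining verifications — that $Y$ is unitary, that a lift of a unit is a unit, and that $\widetilde{YZ}=\tilde X$ — are routine, and the theorem follows.
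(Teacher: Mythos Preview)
Your proof is correct and follows essentially the same route as the paper: match determinants by producing a diagonal element $\mathrm{diag}(b,(b^*)^{-1},1,\dots,1)\in U(V)$ whose reduction has the right determinant, then invoke Theorem~\ref{thm:surjectivityofreduction} for the $SU$ part. The only cosmetic difference is that you split cases on whether $*$ is ramified over $S$ while the paper splits over $\tilde S$; since, as you note, these conditions are equivalent (an $i\in S^\times$ with $i^*=-i$ reduces to one in $\tilde S^\times$, and the converse implication holds by contraposition), the two case analyses coincide.
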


\begin{proof}
    Let \( Z \in U(\tilde V)  \) and let \( \tilde a = \det (Z) \) for some \( a \in S \).
    Suppose first that \( * \) is an unramified involution of \(\tilde S\).
    By Lemma \ref{lem:normonedecompunramified}
    there is some \( b \in S^\times \) such that \( \tilde b (\tilde b ^*)^{-1} = \tilde a \).
    Let \( c = b(b^*)^{-1} \). Now \( \tilde c = \tilde a \). Let
    \( X = diag(b,(b^*)^{-1},1,\dots,1) \in M(2m,S) \) and observe that \( X \in U(V) \) and
    that \( \det (\tilde X) = \det (Z) \). Therefore \( Z\tilde X^{-1} \in SU(\tilde V) \)
    and by Theorem~\ref{thm:surjectivityofreduction} there is some \( Y \in SU(V) \)
    such that \( Z\tilde X^{-1} = \tilde Y \). Therefore \( Z = \widetilde{YX} \) as required.

    Now suppose that \( * \) is ramified. Then by Lemma \ref{lem:imagedetramified},
    \( a \equiv 1 \mod \mathfrak j \). Once again by Lemma \ref{lem:normonedecompunramified}
    there is some \( b \in S^\times \) such that \( \tilde b (\tilde b ^*)^{-1} = \tilde a \).
    Now proceeding exactly as in the previous case, we again conclude that \( Z \in \text{im}(\pi) \).
\end{proof}

\noindent{\bf Acknowledgment.} We thank the referee for a careful reading of the manuscript.



\begin{thebibliography}{RBMW}

\bibitem[ADW]{ADW} J. Ahrens, A. Dress and H. Wolf, \emph{Relationen zwischen Symmetrien in orthogonalen Gruppen},
J. Reine Angew. Math. 234 (1969) 1--11.

\bibitem[B]{B} R. Baeza, \emph{Eine Zerlegung der unit$\mathrm{\ddot{a}}$ren Gruppe $\mathrm{\ddot{u}}$ber lokalen Ringen}, Arch. Math. 24 (1973) 144-–157.


\bibitem[Bo]{Bo} B$\mathrm{\ddot{o}}$ge, \emph{Definierende Relationen zwischen Erzeugenden der klassischen Gruppen},
Abh. Math. Sem. Univ. Hamburg 30 (1967) 165-–178.

\bibitem[CG]{CG} H.L. Claasen and R.W. Goldbach, \emph{A field-like property
of finite rings}, Indag. Math. 3 (1992) 11-–26.

\bibitem[CQS]{CQS} J. Cruickshank, R. Quinlan and F. Szechtman, \emph{Hermitian and skew hermitian forms over local rings},
 arXiv:1705.01562.

\bibitem[D]{D} J. Dieudonn$\mathrm{\acute{e}}$, \emph{La g$\mathrm{\acute{e}}$om$\mathrm{\acute{e}}$trie des groupes classiques}, 3$\mathrm{\grave{e}}me$ ed., Springer-Verlag, Berlin, 1971.

\bibitem[E]{E} Ellers, Relations in classical groups, J. Algebra 51 (1978) 19-–24.

 \bibitem[F]{F} U. A. First, \emph{Witt's Extension Theorem for Quadratic Spaces over Semiperfect Rings}, J. Pure Appl. Algebra 219 (2015) 5673--5696.

\bibitem[GHH]{GHH} S. Gurevich, R. Hadani and R. Howe,
\emph{Quadratic reciprocity and the sign of the Gauss sum via the finite Weil representation},
Int. Math. Res. Not. IMRN 19 (2010) 3729-–3745.

\bibitem[GV]{GV} L. Guti\'errez Frez and A. Vera-Gajardo, \emph{Weil representations of $\mathrm{U}(n, n)(F_{q^2}/F_q)$ , $q > 3$ odd, via presentation and compatibility of methods}, Comm. Algebra 46 (2018) 653--663.

\bibitem[G]{G} G$\mathrm{\ddot{o}}$tzki,\emph{Unverk$\mathrm{\ddot{u}}$rzbare Produkte und Relationen in unitaren Gruppen},
Math. Z. 104 (1968) 1--15.

\bibitem[HO]{HO} A. Hahn and O.T. O'Meara, \emph{The classical groups and $K$-theory}, Springer-Verlag, Berlin, 1989.


\bibitem[H]{H} I. Herstein, \emph{Noncommutative rings}, The Carus Mathematical
Monographs, no. 15, Mathematical Association of America, 1968.

\bibitem[Ho]{Ho} T. Honold, \emph{Characterization of finite Frobenius
rings}, Arch. Math. (Basel)  76  (2001) 406-–415.

\bibitem[K]{K} W. Klingenberg, \emph{Symplectic groups over local rings},  Amer. J. Math. 85 (1963) 232--240.

\bibitem[J]{J} N. Jacobson, \emph{Basic Algebra I}, W.H. Freeman, New York, 1985.


\bibitem[La]{La} E. Lamprecht, \emph{\"{U}ber $I$-regul\"{a}re Ringe, regul\"{a}re Ideale und
Erkl\"{a}rungsmoduln. I.}, Math. Nachr. 10 (1953) 353-–382.

\bibitem[PS]{PS} J. Pantoja and J. Soto-Andrade,
\emph{A Bruhat decomposition of the group ${\rm SL}_*(2,A)$},
J. Algebra 262 (2003) 401-–412.

\bibitem[Pr]{Pr} D. Prasad, \emph{A brief survey on the theta correspondence}, pp. 171–193 in Number
theory (Tiruchirapalli, 1996), edited by V. K. Murty and M. Waldschmidt, Contemp. Math. 210,
Amer. Math. Soc., Providence, RI, 1998.



\bibitem[P]{P} J. Pantoja, \emph{A presentation of the group ${\rm Sl}_\ast(2,A)$, $A$ a simple
   Artinian ring with involution}, Manuscripta Math. 121 (2006) 97--104.

\bibitem[R]{R} H. Reiter, \emph{Witt's theorem for noncommutative semilocal rings}, J. Algebra 35 (1975) 483--499.

\bibitem[S]{S} F. Szechtman, \emph{Quadratic Gauss sums over
finite commutative rings}, J. Number Theory 95 (2002) 1-–13.

\bibitem[V]{V} Vera-Gajardo. \emph{A generalized Weil representation for the finite split orthogonal group $\mathrm{0}_q(2n,2n)$, $q$ odd $>3$},
J. Lie Theory 25 (2015) 257--270.

\bibitem[Wo]{Wo} J. Wood, \emph{Duality for modules over finite rings and
applications to coding theory}, Amer. J. Math. 121 (1999)
555-–575.





\end{thebibliography}
\end{document}